\newtheorem{theorem}{Theorem}
\newtheorem{lemma}[theorem]{Lemma}
\newtheorem{proposition}[theorem]{Proposition}
\newenvironment{proof}{\trivlist
  \item[\hskip\labelsep{\itshape Proof.}]\upshape}{\nobreak\noindent
  $\square$\endtrivlist}
\newenvironment{other}[1]{\refstepcounter{theorem}\trivlist
  \item[\hskip\labelsep{\itshape #1~\arabic{theorem}.}]
  \upshape}{\endtrivlist\bigbreak}
\newenvironment{other*}[1]{\trivlist
  \item[\hskip\labelsep{\itshape #1.}]
  \upshape}{\endtrivlist\bigbreak}
\renewcommand\theenumi{(\roman{enumi})}
\newcommand\wt{{\mathop{\mathrm{wt}\,}}}
\newcommand\SL{\mathord{\mathrm{SL}}}
\newcommand\SO{\mathord{\mathrm{SO}}}
\newcommand\PGL{\mathord{\mathrm{PGL}}}
\newcommand\Gr{\mathord{\mathrm{Gr}}}
\newcommand\BM{{\mathrm{BM}}}
\newcommand\BDConv{\mathord{\mathcal Gr}}
\newcommand\caltimes{\mathbin{\propto}}
\begin{document}
\title{Mirković-Vilonen basis in type $A_1$}
\author{Pierre Baumann and Arnaud Demarais}
\date{}
\maketitle

\begin{abstract}
\noindent
Let $G$ be a reductive connected algebraic group over $\mathbb C$.
Through the geometric Satake equivalence, the fundamental classes of
the Mirković-Vilonen cycles define a basis in each tensor product
$V(\lambda_1)\otimes\cdots\otimes V(\lambda_n)$ of irreducible
representations of $G$. In the case $G=\SL_2(\mathbb C)$, we show
that this basis coincides with the dual canonical basis at $q=1$.
\end{abstract}

\section{Introduction}
Let $G$ be a reductive connected algebraic group over $\mathbb C$,
endowed with a Borel subgroup $B$ and a maximal torus $T\subset B$.
Irreducible rational representations of $G$ are classified by their
highest weight: to the dominant integral weight $\lambda$ corresponds
the irreducible representation~$V(\lambda)$.

Several constructions allow to define nice bases of $V(\lambda)$, for
instance:
\begin{itemize}
\item
From the study of quantum groups, Lusztig \cite{Lusztig90} defined his
canonical basis in the quantum deformation $V_q(\lambda)$; taking the
classical limit $q=1$ provides a basis of $V(\lambda)$. For convenience,
we will in fact use the dual canonical of this basis, aka Kashiwara's
upper global basis~\cite{Kashiwara}.
\item
The geometric Satake correspondence \cite{Lusztig81,MirkovicVilonen}
realizes $V(\lambda)$ as the intersection cohomology of certain Schubert
varieties $\overline{\Gr^\lambda}$ in the affine Grassmannian of the
Langlands dual of $G$. The fundamental classes of the Mirković-Vilonen
cycles form a basis of this cohomology space, hence of $V(\lambda)$.
\end{itemize}

These two constructions can be extended to tensor products
$V(\lambda_1)\otimes\cdots\otimes V(\lambda_r)$, see chapter~27 in
\cite{Lusztig93} for the former and sect.~2.4 in \cite{GoncharovShen} for
the latter. These two bases share several nice properties, for instance
both are compatible with the isotypical filtration and with restriction
to standard Levi subgroups; also both are difficult to compute.
In general they differ: an example with $G=\SL_3(\mathbb C)$ and $r=12$
is given in~\cite{FontaineKamnitzerKuperberg}; examples for $r=1$
(hence for irreducible representations) are given in
\cite{BaumannKamnitzerKnutson} for $G=\SO_8(\mathbb C)$ and
$G=\SL_6(\mathbb C)$.

In type $A_1$, that is for $G=\SL_2(\mathbb C)$, the dual canonical basis
was computed by Frenkel and Khovanov~\cite{FrenkelKhovanov}. The aim of
this paper is to do the analog for the Mirković-Vilonen basis.

\trivlist
\item[\hskip\labelsep{\bfseries Theorem.}]
\itshape
For $G=\SL_2(\mathbb C)$, the Mirković-Vilonen basis of a tensor product
$V(\lambda_1)\otimes\cdots\otimes V(\lambda_r)$ coincides with the
dual canonical basis of this space specialized at $q=1$.
\upshape
\endtrivlist

This result is trivial in the case $r=1$ of an irreducible representation,
but the general case seems less obvious. We must also point out that in
truth, this result holds only after reversal of the order of the tensor
factors, but this defect is merely caused by a difference in the conventions.

In this case $G=\SL_2(\mathbb C)$, each dominant weight is a nonnegative
multiple of the fundamental weight $\varpi$. Then $V(n\varpi)$ has
dimension $n+1$ and is the Cartan component, i.e.\ the top step in the
isotypical filtration, of $V(\varpi)^{\otimes n}$. We can thus regard
$V(n_1\varpi)\otimes\cdots\otimes V(n_r\varpi)$ as a quotient of
$V(\varpi)^{\otimes(n_1+\cdots+n_r)}$. Since both the dual canonical
basis and the Mirković-Vilonen basis behave well under this quotient
operation, it is enough to establish the theorem in the particular case
of the tensor power $V(\varpi)^{\otimes n}$.

This paper is organized in the following way. In sect.~\ref{se:CombLin},
we define a basis of $V(\varpi)^{\otimes n}$ by a simple recursive formula
and argue that it matches Frenkel and Khovanov's characterization of the
dual canonical basis. In sect.~\ref{se:MVBasis}, we recall the definition of
the Mirković-Vilonen basis in tensor products of irreducible representations
and prove its good behavior under the quotient operation mentioned in
the previous paragraph. In sect.~\ref{se:Geometry}, we show that the
Mirković-Vilonen basis of $V(\varpi)^{\otimes n}$ satisfies the recursive
formula from sect.~\ref{se:CombLin} (this is the difficult part in the paper).

This work is based on the PhD thesis of the second author~\cite{Demarais}.
We however rewrote the proof to render it more accessible and remove
ambiguities.

While readying this paper, we learned that independently Pak-Hin Li
computed the Mirković-Vilonen basis for the tensor product of two
irreducible representations of $\SL_2(\mathbb C)$.

\textit{Acknowledgements.}
P.B.'s research is supported by the ANR project GeoLie,
ANR-15-CE40-0012.

\section{Combinatorics and linear algebra}
\label{se:CombLin}
Let $\mathbb K$ be a field and let $V$ be the vector space $\mathbb K^2$.
In this section, we define in an elementary manner an explicit basis in
each tensor power $V^{\otimes n}$ that has nice properties with respect
to the natural action of $\SL_2(\mathbb K)$.

\subsection{Words}
\label{ss:Words}
Given a nonnegative integer $n$, we set $\mathscr C_n=\{+,-\}^n$.
We regard an element in $\mathscr C_n$ as a word of length $n$ on the
alphabet $\{+,-\}$. Concatenation of words endows
$\mathscr C=\bigcup_{n\geq0}\mathscr C_n$ with the structure of a monoid.

The weight of a word $w\in\mathscr C$, denoted by $\wt(w)$, is the number
of letters $+$ minus the number of letters $-$ that $w$ contains. A word
$w=w(1)w(2)\cdots w(n)$ is said to be semistable if its weight is $0$ and
if each initial segment $w(1)\cdots w(j)$ has nonpositive weight.

Words are best understood through a representation as planar paths,
where letters $+$ and $-$ are depicted by upward and downward segments,
respectively. A word is semistable if the endpoints of its graphical
representation are on the same horizontal line and if the whole path
lies below this line.

Any word $w$ can be uniquely factorized as a concatenation
$$w_{-r}+\cdots+w_{-1}+w_0-w_1-\cdots-w_s$$
where $r$ and $s$ are nonnegative integers and where the words
$w_{-r}$, \dots, $w_s$ are semistable. The $r$ letters $+$ and
the $s$ letters $-$ that do not occur in the semistable words are
called significant; informally, a letter $+$ is significant if it
marks the first time a new highest altitude is reached.

\begin{other*}{Example}
The following picture illustrates the factorization of the word
$$w=\color{orange}-+\color{black}+\color{orange}-+-+\color{black}++
\color{orange}--+--+++\color{black}+-\color{orange}-+\color{black}-.$$

\vspace*{-18pt}
This word has length $22$ and weight $2$. Here $(r,s)=(4,2)$ and the
words $w_{-2}$, $w_0$ and $w_2$ are empty. Significant letters are
written in black.
\begin{center}
\begin{tikzpicture}[scale=0.5]
\draw[very thin,color=gray!40] (-0.5,-1.5) grid (22.5,4.5);
\draw[very thick,orange] (0,0)--(1,-1)--(2,0);
\draw[very thick] (2,0)--(3,1);
\draw[very thick,orange] (3,1)--(4,0)--(5,1)--(6,0)--(7,1);
\draw[very thick] (7,1)--(9,3);
\draw[very thick,orange] (9,3)--(10,2)--(11,1)--(12,2)--(13,1)--(14,0)--(15,1)--
  (16,2)--(17,3);
\draw[very thick] (17,3)--(18,4)--(19,3);
\draw[very thick,orange] (19,3)--(20,2)--(21,3);
\draw[very thick] (21,3)--(22,2);
\end{tikzpicture}
\end{center}
\end{other*}

Given a word $w$, we denote by $\mathscr P(w)$ the set of words obtained
from $w$ by changing a single significant letter $+$ into a $-$. With our
previous notation, $\mathscr P(w)$ has $r$ elements.

\subsection{Bases}
\label{ss:Bases}
Let $(x_+,x_-)$ be the standard basis of the vector space $V$.
Each word $w=w(1)w(2)\cdots w(n)$ in $\mathscr C^n$ defines an element
$x_w=x_{w(1)}\otimes\cdots\otimes x_{w(n)}$ in the $n$-th tensor power of
$V$. The family $(x_w)_{w\in\mathscr C_n}$ is a basis of $V^{\otimes n}$.

We define another family of elements $(y_w)_{w\in\mathscr C}$ in
the tensor algebra of $V$ by the convention $y_\varnothing=1$ and the
recursive formulas
$$y_{+w}=x_+\otimes y_w\quad\text{and}\quad
y_{-w}=x_-\otimes y_w-\sum_{v\in\mathscr P(w)}x_+\otimes y_v.$$

Rewritting the latter as
\begin{equation}
\label{eq:DefYw}
x_+\otimes y_w=y_{+w}\quad\text{and}\quad
x_-\otimes y_w=y_{-w}+\sum_{v\in\mathscr P(w)}y_{+v}
\end{equation}
one easily shows by induction on the length of words that each element
$x_w$ can be expressed as a linear combination of elements $y_v$, with
$v$ in the set of words that the same length and weight as~$w$. It
follows in particular that for each nonnegative integer $n$, the family
$(y_w)_{w\in\mathscr C_n}$ spans $V^{\otimes n}$, hence is a basis of
this space.

\begin{proposition}
\label{pr:CaracYw}
The family $(y_w)_{w\in\mathscr C}$ is characterized by the following
conditions:

\vspace{-12pt}
\begin{enumerate}
\item
\label{it:PrCYa}
If $w$ is of the form $+\cdots+-\cdots-$, then $y_w=x_w$.
\item
\label{it:PrCYb}
$y_{-+}=x_{-+}-x_{+-}$.
\item
\label{it:PrCYc}
Let $u$ be a semistable word and let $(w',w'')\in\mathscr C_{n'}
\times\mathscr C_{n''}$. Write
$y_{w'w''}=\sum\limits_i\;y'_i\otimes y''_i$ with
$(y'_i,y''_i)\in V^{\otimes n'}\times V^{\otimes n''}$.
Then $y_{w'uw''}=\sum\limits_i\;y'_i\otimes y_u\otimes y''_i$.
\iffalse
Let $(u,w',w'')\in\mathscr C_m\times\mathscr C_{n'}\times\mathscr C_{n''}$.
Define a linear map
$J:V^{\otimes n'}\otimes V^{\otimes n''}\to V^{\otimes n'+m+n''}$ by
$J(x'\otimes x'')=x'\otimes y_u\otimes x''$.
If $u$ is semistable, then $y_{w'uw''}=J(y_{w'w''})$.
\fi
\end{enumerate}
\end{proposition}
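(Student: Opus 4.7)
The statement has two contents: that the recursively defined family $(y_w)$ satisfies (i)--(iii), and that any family meeting these three conditions must equal $(y_w)$. I would address them in that order. The first two properties are immediate from the recursion: property~(i) follows because semistable words have empty $\mathscr P$, so the sum in $y_{-w}$ vanishes whenever $w$ is of the form $-\cdots-$; property~(ii) is a one-line calculation using $\mathscr P(+) = \{-\}$. The heart of the matter is (iii), which I would prove by induction on the length $|u|$.

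In the inductive step I would distinguish whether $u$ is \emph{primitive} (meaning semistable with its altitude path touching weight $0$ only at the two endpoints, equivalently $u$ is not a concatenation of two shorter nonempty semistable words). If $u = u_1 u_2$ is not primitive, (iii) for $u$ follows from two applications of the inductive hypothesis (to $u_1$ with outer data $(w', u_2 w'')$ and to $u_2$ with outer data $(w', w'')$), once $y_u = y_{u_1} \otimes y_{u_2}$ is in hand; but this latter identity is itself (iii) for $u_1$ with $w' = \varnothing, w'' = u_2$. If $u$ is primitive, a short check shows $u = -v+$ with $v$ semistable of length $|u|-2$, so the IH applies to $v$. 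I would then sub-induct on $|w'|$. For $|w'| \geq 1$ with first letter $+$ the claim is trivial from the recursion $y_{+w} = x_+ \otimes y_w$; with first letter $-$ it follows from the recursion together with the crucial combinatorial observation that for \emph{any} semistable $u$, the significant $+$'s of $w''' u w''$ are in canonical bijection (by inserting $u$ in the middle) with those of $w''' w''$. This bijection holds because the initial segments of a semistable word have weight $\leq 0$, so inserting $u$ neither creates nor destroys strict altitude maxima, and it makes the two sides of (iii) match term by term through the recursion for $y_{-w}$.

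The main obstacle is the sub-base $|w'| = 0$, where I must show $y_{-v+w''} = y_{-v+} \otimes y_{w''}$. Expanding the recursion for $y_{-v+w''}$ on its first letter, the identity splits into an $x_-$-part and an $x_+$-part. The $x_-$-part reduces to $y_{v+w''} = y_v \otimes x_+ \otimes y_{w''}$, an instance of (iii) for $v$ that holds by the outer IH. For the $x_+$-part, I would identify $\mathscr P(v+w'') = \{v-w''\} \cup \{v+v'' : v'' \in \mathscr P(w'')\}$ (the key point being that every significant $+$ of $w''$ already reaches weight $>0$ in $w''$, so the $\leq 0$ initial segments of $v$ cause no interference), then apply the outer IH to each summand and regroup; the sum collapses through the very recursion that defines $y_{-w''}$, leaving $y_v \otimes x_- \otimes y_{w''}$. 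A final application of (iii) to insert $v$ between the two letters of $y_{-+}$ gives $y_{-v+} = x_- \otimes y_v \otimes x_+ - x_+ \otimes y_v \otimes x_-$, closing the computation.

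For uniqueness, I would strong-induct on $|w|$. Given the canonical factorization $w = w_{-r} + \cdots + w_0 - \cdots - w_s$, iterated application of (iii) peels off the $w_j$'s one at a time, reducing $z_w$ to $z_{+\cdots+-\cdots-} = x_{+\cdots+-\cdots-}$ (given by (i)) decorated by the $z_{w_j}$'s inserted at the appropriate positions. Each semistable $w_j$ is then handled by the same philosophy: if non-primitive it splits via (iii) into shorter semistable pieces; if primitive of the form $-v+$, then (iii) applied to insert $v$ into $z_{-+} = x_{-+} - x_{+-}$ (given by (ii)) expresses $z_{w_j}$ in terms of $z_v$. The IH gives $z_v = y_v$, hence $z_w = y_w$ throughout.
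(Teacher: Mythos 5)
Your proof is correct and follows the same underlying strategy as the paper's: verify (i) and (ii) directly from the recursion, prove (iii) by induction via the key combinatorial fact that inserting a semistable word preserves significant letters, and deduce uniqueness by reducing every word to the building blocks through (iii). Where you diverge is in the induction bookkeeping. You induct on $|u|$ with a sub-induction on $|w'|$ in the primitive case, whereas the paper runs a single induction on the total length $|w'uw''|$. Correspondingly, you introduce an explicit primitive/non-primitive dichotomy for $u$: the non-primitive case is disposed of by splitting $u=u_1u_2$, while the primitive case $u=-v+$ is handled directly. The paper avoids this dichotomy by decomposing an arbitrary nonempty semistable $u$ as $-u'+u''$ (a leading primitive piece followed by a semistable remainder) and carrying out the $w'=\varnothing$ computation for general $u''$; your primitive sub-base is precisely this with $u''=\varnothing$, and your non-primitive split absorbs the rest. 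Both organizations are sound and the core calculations coincide; the paper's version is a bit more compact, yours makes the role of primitivity and of the identity $y_{u_1u_2}=y_{u_1}\otimes y_{u_2}$ more transparent. Likewise for uniqueness: you peel off the canonical factorization and recurse on semistable pieces via primitivity, while the paper observes more succinctly that every word is reached from $+\cdots+-\cdots-$ by repeated insertion of $-+$; both work. One tiny imprecision worth flagging: in your justification of (i) you say ``semistable words have empty $\mathscr P$,'' but the relevant words $-\cdots-$ are not semistable; what matters is simply that they contain no $+$, hence no significant $+$, so $\mathscr P$ is empty.
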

\begin{proof}
Statements~\ref{it:PrCYa} and~\ref{it:PrCYb} follow straightforwardly
from the definition of the elements $y_w$. We prove~\ref{it:PrCYc} by
induction on the length of $w'uw''$. Discarding a trivial case, we
assume that $u$ is not the empty word.

Suppose first that $w'$ is the empty word. Let us write $u$ as a
concatenation $-u'+u''$ where $u'$ and $u''$ are (possibly empty)
semistable words. Equation~\eqref{eq:DefYw} gives
$$x_-\otimes y_{w''}=y_{-w''}+\sum_{v\in\mathscr P(w'')}y_{+v}.$$
Applying the induction hypothesis to the semistable word $u''$ and the
pairs $(-,w'')$ and $(+,v)$, for each $v\in\mathscr P(w'')$, we obtain
$$x_-\otimes y_{u''}\otimes y_{w''}=
y_{-u''w''}+\sum_{v\in\mathscr P(w'')}y_{+u''v}.$$
Since $x_-\otimes y_{u''}=y_{-u''}$, we get
$$y_{-u''}\otimes y_{w''}=y_{-u''w''}+\sum_{v\in\mathscr P(w'')}y_{+u''v}$$
and applying once more the induction hypothesis, this time to the
semistable word $u'$ and the pairs $(\varnothing,-u'')$,
$(\varnothing,-u''w'')$ and $(\varnothing,+u''v)$, we arrive at
\begin{equation}
\label{eq:PrCY1}
y_{u'-u''}\otimes y_{w''}=y_{u'-u''w''}
+\sum_{v\in\mathscr P(w'')}y_{u'+u''v}.
\end{equation}
Starting now with
$$x_+\otimes y_{w''}=y_{+w''}$$
we arrive by similar transformations at
\begin{equation}
\label{eq:PrCY2}
y_{u'+u''}\otimes y_{w''}=y_{u'+u''w''}.
\end{equation}
Since $\mathscr P(u'+u'')=\{u'-u''\}$, we have by definition
\begin{equation}
\label{eq:PrCY3}
y_u=x_-\otimes y_{u'+u''}-x_+\otimes y_{u'-u''}.
\end{equation}
Likewise, $\mathscr P(u'+u''w'')=\{u'-u''w''\}\cup\{u'+u''v\mid
v\in\mathscr P(w'')\}$ leads to
\begin{equation}
\label{eq:PrCY4}
y_{uw''}=x_-\otimes y_{u'+u''w''}-x_+\otimes y_{u'-u''w''}
-\sum_{v\in\mathscr P(w'')}x_+\otimes y_{u'+u''v}.
\end{equation}
Combining \eqref{eq:PrCY1}--\eqref{eq:PrCY4}, we obtain the desired equation
$$y_{uw''}=y_u\otimes y_{w''}.$$

We now address the case where $w'$ is not empty. Suppose that the
first letter of $w'$ is a $+$ and write $w'=+\,\widetilde w'$. Then
$$y_{w'w''}=x_+\otimes y_{\widetilde w'w''}\quad\text{and}\quad
y_{w'uw''}=x_+\otimes y_{\widetilde w'uw''}$$
and the result follows from the induction hypothesis applied to the
semistable word $u$ and the pair $(\widetilde w',w'')$.

If on the contrary the first letter of $w'$ is a $-$, then we write
$w'=-\,\widetilde w'$. Since $u$ is semistable, its insertion in the
middle of a word does not add or remove any significant letter; in
particular, the set of significant letters in $\widetilde w'w''$ is
in natural bijection with the set of significant letters in
$\widetilde w'uw''$. This observation leads to a bijection from
$\mathscr P(\widetilde w'w'')$ onto $\mathscr P(\widetilde w'uw'')$,
which splits a word $v$ in two subwords $v'\in\mathscr C_{n'-1}$ and
$v''\in\mathscr C_{n''}$ and then returns $v'uv''$. With this notation,
$$y_{w'w''}=x_-\otimes y_{\widetilde w'w''}-
\sum_{v\in\mathscr P(\widetilde w'w'')}x_+\otimes y_{v'v''}$$
and
$$y_{w'uw''}=x_-\otimes y_{\widetilde w'uw''}-
\sum_{v\in\mathscr P(\widetilde w'w'')}x_+\otimes y_{v'uv''}.$$
Again the desired equation follows from the induction hypothesis
applied to the semistable word $u$ and the pairs $(\widetilde w',w'')$
and $(v',v'')$, for each $v\in\mathscr P(\widetilde w'w'')$.

Condition~\ref{it:PrCYc} computes $y_{w'uw''}$ from the datum of
$y_{w'w''}$ and $y_u$ whenever $u$ is semistable;
condition~\ref{it:PrCYa} provide the value of $y_w$ when $w$ is of
the form $+\cdots+-\cdots-$; and condition~\ref{it:PrCYb} provides
the value of $y_{-+}$. Noting that any word in $\mathscr C$ can
be obtained from a word of the form $+\cdots+-\cdots-$ by repetitively
inserting the semistable word $-+$ (possibly at non disjoint positions),
we conclude that conditions \ref{it:PrCYa}--\ref{it:PrCYc} fully
characterize the family $(y_w)_{w\in\mathscr C}$.
\end{proof}

As a consequence of this proposition, we see that if
$w_{-k}+\cdots+w_{-1}+w_0-w_1-\cdots-w_\ell$ is the factorization of a
word $w$, as in section~\ref{ss:Words}, then
\begin{equation}
\label{eq:FacYw}
y_w=y_{w_{-k}}\otimes x_+\otimes\cdots\otimes x_+\otimes
y_{w_{-1}}\otimes x_+\otimes y_{w_0}\otimes x_-\otimes y_{w_1}\otimes
x_-\otimes \cdots\otimes x_-\otimes y_{w_\ell}.
\end{equation}

\begin{other*}{Remark}
The transition matrix between the two bases $(x_w)_{w\in\mathscr C_n}$
and $(y_w)_{w\in\mathscr C_n}$ of $V^{\otimes n}$ is unitriangular: if
we write
$$x_w=\sum_{v\in\mathscr C_n}n_{w,v}\,y_v$$
then the diagonal coefficient $n_{w,w}$ is equal to one and the
entry $n_{w,v}$ is zero except when the path representing $v$ lies
above the path representing $w$. In addition, all the coefficients
$n_{w,v}$ are nonnegative integers. The proof of these facts is
left to the reader.
\end{other*}

\subsection{Representations}
\label{ss:Reps}
In this section, we regard $V$ as the defining representation of
$\SL_2(\mathbb K)$. From now on, we assume that $\mathbb K$ has
characteristic zero. We denote by $(e,h,f)$ the usual basis of
$\mathfrak{sl}_2(\mathbb K)$.

Fix a nonnegative integer $n$. Given a word $w\in\mathscr C_n$,
we denote by $\varepsilon(w)$ (respectively, $\varphi(w)$) the number
of significant letters $-$ (respectively, $+$) that $w$ contains.
Thus, in the notation of section \ref{ss:Words}, $\varepsilon(w)=s$
and $\varphi(w)=r$. If $\varepsilon(w)>0$, we can change in $w$ the
leftmost significant letter $-$ into a $+$; the resulting word is
denoted by $\tilde e(w)$. Likewise, if $\varphi(w)>0$, we can change
in $w$ the rightmost significant letter $+$ into a $-$; the resulting
word is denoted by $\tilde f(w)$. If these operations are not feasible,
then $\tilde e(w)$ or $\tilde f(w)$ is defined to be $0$. Endowed with
the maps $\wt$, $\varepsilon$, $\varphi$, $\tilde e$, $\tilde f$, the
set $\mathscr C_n$ identifies with the crystal\footnote{In fact, we
here use the opposite of the usual tensor product of crystals.} of the
$\mathfrak{sl}_2(\mathbb K)$-module~$V^{\otimes n}$.

We denote by $\ell(w)=\varepsilon(w)+\varphi(w)$ the number of significant
letters in a word $w\in\mathscr C_n$; thus $w$ is semistable if and only
if $\ell(w)=0$. For each $p\in\{0,\ldots,n\}$, we denote by
$(V^{\otimes n})_{\leq p}$ the subspace of $V^{\otimes n}$
spanned by the elements $y_w$ such that $\ell(w)\leq p$. We agree that
$(V^{\otimes n})_{\leq-1}=\{0\}$.

\begin{proposition}
\label{pr:RepYw}
The basis $(y_w)_{w\in\mathscr C_n}$ of $V^{\otimes n}$ enjoys the
following properties.

\vspace{-12pt}
\begin{enumerate}
\item
\label{it:PrRYa}
For each $w\in\mathscr C_n$, we have
$$e\cdot y_w\equiv\varepsilon(w)\;y_{\tilde e(w)}\quad\text{and}\quad
f\cdot y_w\equiv\varphi(w)\;y_{\tilde f(w)}$$
modulo terms in $(V^{\otimes n})_{\leq\ell(w)-1}$.
\item
\label{it:PrRYb}
For each $p\in\{0,\ldots,n\}$, the subspace $(V^{\otimes n})_{\leq p}$
is a subrepresentation of $V^{\otimes n}$, and the quotient
$(V^{\otimes n})_{\leq p}/(V^{\otimes n})_{\leq p-1}$ is an isotypical
representation, sum of simple $\mathfrak{sl}_2(\mathbb K)$-modules of
dimension $p+1$.
\item
\label{it:PrRYc}
The elements $y_w$ with $w$ semistable form a basis of the space of
invariants $(V^{\otimes n})^{\SL_2(\mathbb K)}$.
\end{enumerate}
\end{proposition}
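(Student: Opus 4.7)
The plan is to prove (i), (ii), and the invariance half of (iii) together by induction on $n$, then deduce (ii) from (i), and finally close (iii) using (ii). In the inductive step, the first task is to show $e\cdot y_u=f\cdot y_u=0$ for semistable $u$ of length $n\geq 2$. Since such a $u$ starts with $-$, one can write $u=-u'+u''$ with $u',u''$ semistable of length $<n$ by cutting at the first return of the path of $u$ to altitude $0$. By~\eqref{eq:DefYw} this gives $y_u=x_-\otimes y_{u'+u''}-x_+\otimes y_{u'-u''}$. Applying Leibniz to $e\cdot y_u$ and invoking the inductive case of (i) for the words $u'\pm u''$, together with the observation that $(V^{\otimes n-1})_{\leq 0}$ lies entirely in the zero weight space (so it cannot contain the nonzero-weight remainders), yields $e\cdot y_{u'+u''}=0$ and $e\cdot y_{u'-u''}=y_{u'+u''}$; substituting gives $e\cdot y_u=0$, and $f\cdot y_u=0$ follows by symmetry.

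For (i), the case $\ell(w)=0$ is covered above. When $\ell(w)\geq 1$, expand $y_w$ by~\eqref{eq:FacYw} and apply $e$ via Leibniz. Each semistable block has length $<n$, so the previous step kills $e\cdot y_{w_i}$; only the significant $x_-$'s contribute, and one obtains $e\cdot y_w=\sum_{j=1}^{\varepsilon(w)}T_j$, where $T_j$ is the tensor produced from~\eqref{eq:FacYw} by swapping the $j$-th significant $x_-$ for $x_+$. The tensor $T_1$ coincides with $y_{\tilde e(w)}$ directly, since the factorization of $\tilde e(w)=w^{(1)}$ differs from that of $w$ only by turning the relevant $-$ into a new significant $+$. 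The crux of the argument is the telescoping identity $T_j-T_{j-1}=y_{w^{(j)}}$ for $j\geq 2$, where $w^{(j)}$ denotes $w$ with its $j$-th significant $-$ flipped to $+$; granting this, telescoping gives
$$e\cdot y_w=\varepsilon(w)\,y_{\tilde e(w)}+\sum_{k=2}^{\varepsilon(w)}(\varepsilon(w)-k+1)\,y_{w^{(k)}},$$
and since $\ell(w^{(k)})=\ell(w)-2$ for $k\geq 2$, the error term lies in $(V^{\otimes n})_{\leq\ell(w)-1}$. The case of $f$ is symmetric.

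The main technical obstacle is this identity. To establish it I will compute the unique factorization of $w^{(j)}$ explicitly for $j\geq 2$: the three consecutive semistable blocks $w_{j-2},w_{j-1},w_j$ of $w$ merge into the single new semistable block $v=w_{j-2}-w_{j-1}+w_j$---semistability is checked by tracking the path, which dips only to altitude $-1$ inside the inner $-w_{j-1}+$ segment and is kept nonpositive by the flanking $w_{j-2}$ and $w_j$---while all remaining blocks and significant letters persist unchanged. In particular $\ell(w^{(j)})=\ell(w)-2$. A short calculation using~\eqref{eq:DefYw} and Proposition~\ref{pr:CaracYw}\ref{it:PrCYc} gives
$$y_v=y_{w_{j-2}}\otimes x_-\otimes y_{w_{j-1}}\otimes x_+\otimes y_{w_j}-y_{w_{j-2}}\otimes x_+\otimes y_{w_{j-1}}\otimes x_-\otimes y_{w_j},$$
and substituting this into~\eqref{eq:FacYw} for $w^{(j)}$ splits $y_{w^{(j)}}$ into exactly $T_j-T_{j-1}$. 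Part (ii) is then immediate from (i): $(V^{\otimes n})_{\leq p}$ is $\mathfrak{sl}_2$-stable, and on the successive quotient the residues $\bar y_w$ with $\ell(w)=p$ assemble into disjoint $\mathfrak{sl}_2$-chains of length $\varepsilon(w)+\varphi(w)+1=p+1$, proving the isotypical claim. Finally, (ii) identifies the invariants of $V^{\otimes n}$ with its dimension-$1$ isotypical component, which is precisely $(V^{\otimes n})_{\leq 0}$, closing (iii).
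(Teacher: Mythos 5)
Your proof is correct and follows the same strategy as the paper's (very terse) sketch: use the factorization~\eqref{eq:FacYw} together with the Leibniz rule to reduce the action of $e$ and $f$ on $y_w$ to the significant letters, show that the semistable blocks contribute nothing, and deduce~\ref{it:PrRYb} and~\ref{it:PrRYc} from~\ref{it:PrRYa}. You supply the details the paper waves away as ``easily dealt with'', most notably the telescoping identity $T_j - T_{j-1} = y_{w^{(j)}}$ for $j \geq 2$, which rests on the (correct) observation that after flipping the $j$-th significant $-$, the three semistable blocks $w_{j-2}, w_{j-1}, w_j$ of $w$ merge into the single semistable block $w_{j-2}-w_{j-1}+w_j$ of $w^{(j)}$; substituting the resulting expression for $y_v$, obtained from Proposition~\ref{pr:CaracYw}~\ref{it:PrCYc} applied to $y_{-+}$, back into~\eqref{eq:FacYw} then yields the identity, and the telescoping gives exactly the claimed congruence with error terms $y_{w^{(k)}}$ of level $\ell(w)-2$. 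The one point of divergence is minor: you obtain $e\cdot y_u = f\cdot y_u = 0$ for semistable $u$ by writing $u = -u'+u''$ and combining the inductive case of~\ref{it:PrRYa} with a weight argument, whereas the paper obtains invariance by repeated $-+$-insertion and the $\SL_2$-equivariance furnished by Proposition~\ref{pr:CaracYw}~\ref{it:PrCYc}; both routes are valid and of comparable length.
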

\trivlist
\item[\hskip\labelsep{\itshape Sketch of proof.}]
\upshape
We first note that any semistable word can be obtained from the empty
word by repetitively inserting the word $-+$ and that $y_{-+}$ is
invariant under the action of $\SL_2(\mathbb K)$ on $V^{\otimes2}$.
From Proposition~\ref{pr:CaracYw}~\ref{it:PrCYc}, it then follows that
any element $y_w$ with $w$ semistable is $\SL_2(\mathbb K)$-invariant.
Using now \eqref{eq:FacYw}, we reduce the proof of statement~\ref{it:PrRYa}
to the case where $w$ is of the form $+\cdots+-\cdots-$ (though possibly
for a smaller $n$), which is easily dealt with.

Statement~\ref{it:PrRYb} is a direct consequence of
statement~\ref{it:PrRYa} and implies that $(V^{\otimes n})_{\leq0}$
is the subspace of invariants $(V^{\otimes n})^{\SL_2(\mathbb K)}$,
an assertion equivalent to statement~\ref{it:PrRYc}.
\nobreak$\square$
\endtrivlist

The basis $(y_w)_{w\in\mathscr C_n}$ of~$V^{\otimes n}$ is even
more remarkable than what Proposition~\ref{pr:RepYw} claims.
Comparing Frenkel and Khovanov's work (\cite{FrenkelKhovanov},
Theorem~1.9) with Proposition~\ref{pr:CaracYw}, we indeed get:

\begin{theorem}
\label{th:FrenKhov}
$(y_w)_{w\in\mathcal C_n}$ is the dual canonical basis of
$V^{\otimes n}$ specialized at $q=1$.
\end{theorem}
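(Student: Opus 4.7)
The proof is essentially a comparison between the characterization of $(y_w)_{w\in\mathscr C_n}$ given in Proposition~\ref{pr:CaracYw} and the characterization of the dual canonical basis established by Frenkel and Khovanov in Theorem~1.9 of \cite{FrenkelKhovanov}. My plan is therefore to make this comparison explicit.

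First I would recall from \cite{FrenkelKhovanov} what the dual canonical basis of $V_q(\varpi)^{\otimes n}$ looks like, in particular how its elements are indexed by sign sequences $w\in\mathscr C_n$ and how the three defining properties read: (a) for highest-weight-type words $+\cdots+-\cdots-$ the basis element is the pure tensor; (b) for the word $-+$ the basis element is the standard quantum invariant $x_{-+}-q\,x_{+-}$, which specializes to $x_{-+}-x_{+-}$ at $q=1$; and (c) a compatibility saying that insertion of a semistable (i.e.\ balanced and everywhere nonpositive) word acts on the dual canonical basis by tensoring with the corresponding invariant, in a position-equivariant way. I would check this last property carefully against the statement of \cite{FrenkelKhovanov}, Theorem~1.9, since this is where conventions differ most.

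Once these three Frenkel-Khovanov properties are laid out, the argument reduces to observing that they coincide at $q=1$ with conditions \ref{it:PrCYa}, \ref{it:PrCYb} and \ref{it:PrCYc} of Proposition~\ref{pr:CaracYw}. Since Proposition~\ref{pr:CaracYw} asserts that these three conditions uniquely determine the family $(y_w)$, one concludes that $y_w$ equals the specialization at $q=1$ of the dual canonical basis element indexed by $w$.

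The main obstacle I anticipate is purely a matter of conventions: the dual canonical basis is most often described on the quantum side via the bar involution and an upper-triangularity property with respect to the PBW or standard monomial basis, whereas here we have a purely combinatorial recursion. Bridging the two requires pointing to the exact combinatorial form that Frenkel-Khovanov give in their Theorem~1.9 and checking that their insertion rule for a semistable word genuinely matches our condition~\ref{it:PrCYc} (and not, say, its opposite variant coming from the footnote about the reversed tensor product of crystals). Once the dictionary is set, no further computation is needed, because uniqueness from Proposition~\ref{pr:CaracYw} does all the work.
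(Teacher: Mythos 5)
Your proposal matches the paper's argument exactly: the paper gives no further proof beyond the remark that comparing Proposition~\ref{pr:CaracYw} with Theorem~1.9 of \cite{FrenkelKhovanov} yields the result, together with the caveat about reversing the order of tensor factors, which you also correctly flag. Your write-up simply makes explicit the dictionary between the two characterizations, which is precisely what the paper leaves to the reader.
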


As mentioned in the introduction, this result actually holds only after
reversal of the order of the tensor factors.

\section{The Mirković-Vilonen basis}
\label{se:MVBasis}
In this section, we consider a connected reductive group $G$ over
$\mathbb C$ and explain the definition of the Mirković-Vilonen basis
(from now on: MV basis) in a tensor product
$V(\bm\lambda)=V(\lambda_1)\otimes\cdots\otimes V(\lambda_n)$ of
irreducible representations of $G$; references for the material
presented here are \cite{MirkovicVilonen} and sect.~2.4 in
\cite{GoncharovShen}. We recall the recipe from
\cite{BaumannGaussentLittelmann} to compute the transition matrix
between the MV basis of $V(\bm\lambda)$ and the tensor product of
the MV bases of the factors $V(\lambda_1)$, \dots, $V(\lambda_n)$.
We state and prove a compatibility property of the MV bases with
tensor products of projections onto Cartan components.

\subsection{Definition of the basis}
\label{ss:DefBasis}
We choose a maximal torus $T$ and a Borel subgroup $B$ of $G$ such that
$T\subset B$. The Langlands dual $G^\vee$ of $G$ comes with a maximal
torus $T^\vee$ and a Borel subgroup $B^\vee$. We denote by $N^{-,\vee}$
the unipotent radical of the Borel subgroup of $G^\vee$ opposite to
$B^\vee$ with respect to $T^\vee$. We denote by $\Lambda$ the weight
lattice of $T$ and by $\Lambda^+\subset\Lambda$ the set of dominant
weights. Let $\leq$ be the dominance order on $\Lambda$; positive elements
with respect to $\leq$ are sums of positive roots. We view the half-sum
of all positive coroots as a linear form $\rho:\Lambda\to\mathbb Q$.

The affine Grassmannian of $G^\vee$ is the homogeneous space
$\Gr=G^\vee\bigl(\mathbb C\bigl[z,z^{-1}\bigr]\bigr)/
G^\vee(\mathbb C[z])$, where $z$ is an indeterminate.
It is endowed with the structure of an ind-variety.

Each weight $\lambda\in\Lambda$ gives a point $z^\lambda$ in
$T^\vee\bigl(\mathbb C\bigl[z,z^{-1}\bigr]\bigr)$, whose image in $\Gr$
is denoted by $L_\lambda$. The $G^\vee(\mathbb C[z])$-orbit through
$L_\lambda$ in $\Gr$ is denoted by $\Gr^\lambda$; this is a smooth
connected variety of dimension $2\rho(\lambda)$. The Cartan decomposition
implies that
$$\Gr=\bigsqcup_{\lambda\in\Lambda^+}\Gr^\lambda;\quad\text{moreover}\quad
\overline{\Gr^\lambda}=\bigsqcup_{\substack{\mu\in\Lambda^+\\[2pt]
\mu\leq\lambda}}\Gr^\mu.$$
The geometric Satake correspondence identifies the irreducible
representation of highest weight $\lambda$ with the intersection
cohomology of $\overline{\Gr^\lambda}$ with trivial local system of
coefficients:
$$V(\lambda)=IH\Bigl(\overline{\Gr^\lambda},\underline{\mathbb C}\Bigr).$$

Let $n$ be a positive integer. The group $G^\vee(\mathbb C[z])^n$ acts on
the space $G^\vee\bigl(\mathbb C\bigl[z,z^{-1}\bigr]\bigr)^n$ by
$$(h_1,\ldots,h_n)\cdot(g_1,\ldots,g_n)=(g_1h_1^{-1},h_1g_2h_2^{-1},
\ldots,h_{n-1}g_nh_n^{-1})$$
where $(h_1,\ldots,h_n)\in G^\vee(\mathbb C[z])^n$ and
$(g_1,\ldots,g_n)\in G^\vee\bigl(\mathbb C\bigl[z,z^{-1}\bigr]\bigr)^n$.
The quotient is called the $n$-fold convolution variety and is denoted
by $\Gr_n$. We will use the customary notation
$$\Gr_n=G^\vee\bigl(\mathbb C\bigl[z,z^{-1}\bigr]\bigr)\,
\times^{G^\vee(\mathbb C[z])}\;\cdots\,\times^{G^\vee(\mathbb C[z])}\;
G^\vee\bigl(\mathbb C\bigl[z,z^{-1}\bigr]\bigr)\,/\,G^\vee(\mathbb C[z])$$
to indicate this construction and denote the image in $\Gr_n$ of a tuple
$(g_1,\ldots,g_n)$ by $[g_1,\ldots,g_n]$. Then $\Gr_n$ is endowed with the
structure of an ind-variety. One may note that $\Gr_1$ is just the affine
Grassmannian $\Gr$. We define a map $m_n:\Gr_n\to\Gr$ by
$m_n([g_1,\ldots,g_n])=[g_1\ldots g_n]$.

For each tuple $\bm\lambda=(\lambda_1,\ldots,\lambda_n)$ in $\Lambda^n$,
we set
$$|\bm\lambda|=\lambda_1+\cdots+\lambda_n.$$
For each $G^\vee(\mathbb C[z])$-invariant subset $K\subset\Gr$, we
denote by $\widehat K$ the preimage of $K$ under the quotient map
$G^\vee\bigl(\mathbb C\bigl[z,z^{-1}\bigr]\bigr)\to\Gr$.
Given $\bm\lambda=(\lambda_1,\ldots,\lambda_n)$ in
$(\Lambda^+)^n$, we define
$$\Gr_n^{\bm\lambda}=\widehat{\Gr^{\lambda_1}}\,
\times^{G^\vee(\mathbb C[z])}\;\cdots\,
\times^{G^\vee(\mathbb C[z])}\;
\widehat{\Gr^{\lambda_n}}\,/\,G^\vee(\mathbb C[z]).$$
This is a subset of $\Gr_n$ and the geometric Satake correspondence
identifies the tensor product
$$V(\bm\lambda)=V(\lambda_1)\otimes\cdots\otimes V(\lambda_n)$$
with the intersection cohomology of $\overline{\Gr_n^{\bm\lambda}}$.

Given $\mu\in\Lambda$, the
$N^{-,\vee}\bigl(\mathbb C\bigl[z,z^{-1}\bigr]\bigr)$-orbit through
$L_\mu$ is denoted by $T_\mu$; this is a locally closed sub-ind-variety
of $\Gr$. The Iwasawa decomposition implies that
$$\Gr=\bigsqcup_{\mu\in\Lambda}T_\mu;\quad\text{moreover}\quad
\overline{T_\mu}=\bigsqcup_{\substack{\nu\in\Lambda\\[2pt]
\nu\geq\mu}}T_\nu.$$
For each $(\lambda,\mu)\in\Lambda^+\times\Lambda$, the intersection
$\overline{\Gr^\lambda}\cap T_\mu$ (if non-empty) has pure dimension
$\rho(\lambda-\mu)$. Using this fact, Mirković and Vilonen set up the
geometric Satake correspondence so that the $\mu$-subspace of
$V(\lambda)$ identifies with the top-dimensional Borel-Moore homology
of $\Gr^\lambda\cap T_\mu$ (\cite{MirkovicVilonen},~Corollary 7.4):
$$V(\lambda)_\mu=H^\BM_{2\rho(\lambda-\mu)}
\Bigl(\Gr^\lambda\cap T_\mu\Bigr).$$
We denote by $\mathscr Z(\lambda)_\mu$ the set of irreducible components
of $\overline{\Gr^\lambda}\cap T_\mu$. If $Z\in\mathscr Z(\lambda)_\mu$,
then $Z\cap\Gr^\lambda$ is an irreducible component of
$\Gr^\lambda\cap T_\mu$, whose fundamental class in Borel-Moore homology
is denoted by $\langle Z\rangle$. The classes $\langle Z\rangle$, for
$Z\in\mathscr Z(\lambda)_\mu$, form a basis of $V(\lambda)_\mu$.

Likewise, for each $(\bm\lambda,\mu)\in(\Lambda^+)^n
\times\Lambda$, the intersection $\overline{\Gr_n^{\bm\lambda}}\cap
(m_n)^{-1}(T_\mu)$ has pure dimension $\rho(|\bm\lambda|-\mu)$.
We can then identify
$$V(\bm\lambda)_\mu=H^\BM_{2\rho(|\bm\lambda|-\mu)}
\Bigl(\Gr_n^{\bm\lambda}\cap(m_n)^{-1}(T_\mu)\Bigr).$$
We denote by $\mathscr Z(\bm\lambda)_\mu$ the set of irreducible
components of $\overline{\Gr_n^{\bm\lambda}}\cap(m_n)^{-1}(T_\mu)$.
If $\mathbf Z\in\mathscr Z(\bm\lambda)_\mu$, then
$\mathbf Z\cap\Gr_n^{\bm\lambda}$ is an irreducible component of
$\Gr_n^{\bm\lambda}\cap(m_n)^{-1}(T_\mu)$, whose fundamental class in
Borel-Moore homology is denoted by $\langle\mathbf Z\rangle$. These classes
$\langle\mathbf Z\rangle$, for $\mathbf Z\in\mathscr Z(\bm\lambda)_\mu$,
form a basis of~$V(\bm\lambda)_\mu$.

We set
$$\mathscr Z(\lambda)=\bigsqcup_{\mu\in\Lambda}\mathscr Z(\lambda)_\mu
\quad\text{and}\quad\mathscr Z(\bm\lambda)=\bigsqcup_{\mu\in\Lambda}
\mathscr Z(\bm\lambda)_\mu.$$
Elements in these sets are called Mirković-Vilonen (MV) cycles, and the
bases of $V(\lambda)$ and $V(\bm\lambda)$ obtained above are called MV
bases.

\subsection{Indexation of the Mirković-Vilonen cycles}
In this short section, we explain that there is a natural bijection
\begin{equation}
\label{eq:ProdCycMV}
\mathscr Z(\bm\lambda)\cong
\mathscr Z(\lambda_1)\times\cdots\times\mathscr Z(\lambda_n)
\end{equation}
for any $\bm\lambda=(\lambda_1,\ldots,\lambda_n)$ in $\Lambda^n$.
The construction goes back to Braverman and
Gaitsgory~\cite{BravermanGaitsgory}; details can be found in
\cite{BaumannGaussentLittelmann}, Proposition~2.2 and Corollary~4.8.

For $\mu\in\Lambda$, we define
$$\widetilde T_\mu=
N^{-,\vee}\bigl(\mathbb C\bigl[z,z^{-1}\bigr]\bigr)\,z^\mu$$
and note that the natural map
$$\widetilde T_\mu\,/\,N^{-,\vee}(\mathbb C[z])\to T_\mu$$
is bijective. Given a $N^{-,\vee}(\mathbb C[z])$-invariant subset
$Z\subset T_\mu$, we denote by $\widetilde Z$ the preimage of $Z$ by
the quotient map $\widetilde T_\mu\to T_\mu$. In particular, the
notation $\widetilde Z$ is defined for any MV cycle $Z$.

Pick $\bm\mu=(\mu_1,\ldots,\mu_n)$ in $\Lambda^n$ and
$\mathbf Z=(Z_1,\ldots,Z_n)$ in
$\mathscr Z(\lambda_1)_{\mu_1}\times\cdots\times
\mathscr Z(\lambda_n)_{\mu_n}$. Then
$$\Bigl\{[g_1,\ldots,g_n]\Bigm|(g_1,\ldots,g_n)\in
\widetilde Z_1\times\cdots\times\widetilde Z_n\Bigr\}$$
is contained in $(m_n)^{-1}\bigl(T_{|\bm\mu|}\bigr)$ and its closure
in this set is an MV cycle in $\mathscr Z(\bm\lambda)_{|\bm\mu|}$.
Each MV cycle in $\mathscr Z(\bm\lambda)$ can be uniquely obtained in
this manner, which defines the bijection~\eqref{eq:ProdCycMV}.

Because of this, we will allow ourselves to write elements in
$\mathscr Z(\bm\lambda)$ as tuples $\mathbf Z$ as above.

\subsection{Transition matrix}
\label{ss:TransMat}
We continue with our tuple of dominant weights
$\bm\lambda=(\lambda_1,\ldots,\lambda_n)$. To compute the MV basis of
$V(\bm\lambda)$, we compare it with the tensor product of the MV bases
of the factors $V(\lambda_1)$, \ldots, $V(\lambda_n)$. This requires the
introduction of a nice geometric object.

Let $n$ be a positive integer. We define the $n$-fold Beilinson-Drinfeld
convolution variety $\BDConv_n$ as the set of pairs
$(x_1,\ldots,x_n;[g_1,\ldots,g_n])$, where
$(x_1,\ldots,x_n)\in\mathbb C^n$ and $[g_1,\ldots,g_n]$ belongs to
$$G^\vee\bigl(\mathbb C\bigl[z,(z-x_1)^{-1}\bigr]\bigr)
\,\times^{G^\vee(\mathbb C[z])}\;\cdots\,\times^{G^\vee(\mathbb C[z])}\;
G^\vee\bigl(\mathbb C\bigl[z,(z-x_n)^{-1}\bigr]\bigr)
\,/\,G^\vee\bigl(\mathbb C[z]\bigr).$$
We denote by $\pi:\BDConv_n\to\mathbb C^n$ the morphism which forgets
$[g_1,\ldots,g_n]$. It is known that $\BDConv_n$ is endowed with
the structure of an ind-variety and that $\pi$ is ind-proper.

To each composition $\mathbf n=(n_1,\ldots,n_r)$ of $n$ in $r$ parts
corresponds a partial diagonal $\Delta_{\mathbf n}$ in $\mathbb C^n$,
defined as the set of all elements of the form
\begin{equation}
\label{eq:PartDiag}
\mathbf x=(\underbrace{x_1,\ldots,x_1}_{n_1\text{ times}},
\ldots,\underbrace{x_r,\ldots,x_r}_{n_r\text{ times}})
\end{equation}
for $(x_1,\ldots,x_r)\in\mathbb C^r$. The small diagonal is the particular
case $\mathbf n=(n)$; we denote it simply by $\Delta$. We define
$\BDConv_n\bigl|_{\Delta_{\mathbf n}}$ to be $\pi^{-1}(\Delta_{\mathbf n})$.

Given $g\in G^\vee\bigl(\mathbb C\bigl[z,z^{-1}\bigr]\bigr)$ and
$x\in\mathbb C$, we denote by $g_{|x}$ the result of substituting
$z-x$ for $z$ in~$g$. For $\mu\in\Lambda$ and $x\in\mathbb C$, we define
$$\widetilde T_{\mu|x}=
N^{-,\vee}\bigl(\mathbb C\bigl[z,(z-x)^{-1}\bigr]\bigr)\,(z-x)^\mu;$$
this is the set of all elements of the form $g_{|x}$ with
$g\in\widetilde T_\mu$. Given $\bm\mu=(\mu_1,\ldots,\mu_n)$ in
$\Lambda^n$, we define $\mathcal T_{\bm\mu}$ to be the set of all pairs
$(x_1,\ldots,x_n,[g_1,\ldots,g_n])$ with $(x_1,\ldots,x_n)\in\mathbb C^n$
and $g_j\in\widetilde T_{\mu_j|x_j}$ for each $j\in\{1,\ldots,n\}$.
For $\mu\in\Lambda$, we set (leaving $n$ out of the notation)
$$\dot T_\mu=\bigcup_{\substack{\bm\mu\in\Lambda^n\\[2pt]
|\bm\mu|=\mu}}\mathcal T_{\bm\mu}.$$

Given a $N^{-,\vee}(\mathbb C[z])$-invariant subset $Z\subset T_\mu$, we
denote by $\widetilde Z_{|x}$ the set of all elements of the form
$g_{|x}$ with $g\in\widetilde Z$. Given $(\mu_1,\ldots,\mu_n)\in\Lambda^n$
and $\mathbf Z=(Z_1,\ldots,Z_n)$ in $\mathscr Z(\lambda_1)_{\mu_1}\times
\cdots\times\mathscr Z(\lambda_n)_{\mu_n}$, we define
$\dot{\mathcal X}(\mathbf Z)$ to be the set of all pairs
$(x_1,\ldots,x_n;[g_1,\ldots,g_n])$ with $(x_1,\ldots,x_n)\in\mathbb C^n$
and $g_j\in\widetilde Z_{j|x_j}$ for each $j\in\{1,\ldots,n\}$.
(This subset $\dot{\mathcal X}(\mathbf Z)$ is denoted by
$\Uppsi(Z_1\caltimes\cdots\caltimes Z_n)$
in~\cite{BaumannGaussentLittelmann}.)
Given in addition a composition $\mathbf n$ of $n$, we define
$$\mathcal X(\mathbf Z,\mathbf n)=\overline{\dot{\mathcal X}(\mathbf Z)
\bigl|_{\Delta_{\mathbf n}}}\cap\BDConv_n^{\bm\lambda}$$
where the bar means closure in $\dot T_\mu\bigl|_{\Delta_{\mathbf n}}$.

For given $\bm\lambda$, $\mu$ and $\mathbf n$,
the subsets $\mathcal X(\mathbf Z,\mathbf n)$ for $\mathbf Z$ in
$$\mathscr Z(\bm\lambda)_\mu=
\bigsqcup_{\substack{(\mu_1,\ldots,\mu_n)\in\Lambda^n\\[2pt]
\mu_1+\cdots+\mu_n=\mu}}\mathscr Z(\lambda_1)_{\mu_1}
\times\cdots\times\,\mathscr Z(\lambda_n)_{\mu_n}$$
are the irreducible components of $\bigl(\BDConv_n^{\bm\lambda}\cap
\dot T_\mu\bigr)\bigl|_{\Delta_{\mathbf n}}$ (see
\cite{BaumannGaussentLittelmann}, proof of Proposition~5.4).
We adopt a special notation for the small diagonal and set
$\mathcal Y(\mathbf Z)=\mathcal X(\mathbf Z,(n))$.

Now fix $n$, the tuple $\bm\lambda\in(\Lambda^+)^n$, the weight
$\mu\in\Lambda$, and the composition $\mathbf n$ of $n$. We write
$\bm\lambda$ as a concatenation
$\bigl(\bm\lambda_{(1)},\ldots,\bm\lambda_{(r)}\bigr)$, where each
$\bm\lambda_{(j)}$ belongs to $(\Lambda^+)^{n_j}$, and similarly we
write each tuple $\mathbf Z\in\mathscr Z(\bm\lambda)_\mu$ as
$\strut\bigl(\mathbf Z_{(1)},\ldots,\mathbf Z_{(r)}\bigr)$ with
$\mathbf Z_{(j)}\in\mathscr Z(\bm\lambda_{(j)})$. Then
$$V(\bm\lambda)=V\bigl(\bm\lambda_{(1)}\bigr)\otimes\cdots\otimes
V\bigl(\bm\lambda_{(r)}\bigr)\quad\text{and}\quad
\bigl\langle\mathbf Z_{(j)}\bigr\rangle\in V\bigl(\bm\lambda_{(j)}\bigr).$$
With this notation (\cite{BaumannGaussentLittelmann}, Proposition~5.10):
\begin{proposition}
\label{pr:TransMat}
Let $(\mathbf Z',\mathbf Z'')\in(\mathscr Z(\bm\lambda)_\mu)^2$.
The coefficient $b_{\mathbf Z',\mathbf Z''}$ in the expansion
$$\bigl\langle\mathbf Z''_{(1)}\bigr\rangle\otimes\cdots\otimes
\bigl\langle\mathbf Z''_{(r)}\bigr\rangle=\sum_{\mathbf Z\in\mathscr Z
(\bm\lambda)_\mu}b_{\mathbf Z,\mathbf Z''}\;\langle\mathbf Z\rangle$$
is the multiplicity of $\mathcal Y(\mathbf Z')$ in the intersection
product $\mathcal X(\mathbf Z'',\mathbf n)\,\cdot\,
\BDConv_n^{\bm\lambda}\bigl|_\Delta$ computed in the
ambient~space $\BDConv_n^{\bm\lambda}\bigl|_{\Delta_{\mathbf n}}$.
\end{proposition}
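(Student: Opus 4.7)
My plan is to prove Proposition~\ref{pr:TransMat} by a specialization argument inside the Beilinson–Drinfeld family $\BDConv_n^{\bm\lambda}\bigl|_{\Delta_{\mathbf n}}\to\Delta_{\mathbf n}\cong\mathbb C^r$. The key geometric input is the factorization property of the BD Grassmannian: this family is flat and ind-proper, its generic fiber (over the locus of pairwise distinct $r$-tuples of points) is canonically a product $\Gr_{n_1}^{\bm\lambda_{(1)}}\times\cdots\times\Gr_{n_r}^{\bm\lambda_{(r)}}$, and its fiber over the small diagonal $\Delta$ is the convolution variety $\Gr_n^{\bm\lambda}$. First I would check that, by construction, the irreducible subvariety $\mathcal X(\mathbf Z'',\mathbf n)$ restricts over the generic locus of $\Delta_{\mathbf n}$ to the product $\mathcal Y\bigl(\mathbf Z''_{(1)}\bigr)\times\cdots\times\mathcal Y\bigl(\mathbf Z''_{(r)}\bigr)$, so that by Künneth its fundamental class on the generic fiber is precisely $\bigl\langle\mathbf Z''_{(1)}\bigr\rangle\otimes\cdots\otimes\bigl\langle\mathbf Z''_{(r)}\bigr\rangle$.

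Next I would appeal to the BD form of the geometric Satake correspondence. Taking $H^\BM_{2\rho(|\bm\lambda|-\mu)}$ of the fibers of $\BDConv_n^{\bm\lambda}\cap\dot T_\mu$ over $\Delta_{\mathbf n}$ produces a trivial local system whose stalk at every point is identified with $V(\bm\lambda)_\mu$; the canonical parallel transport from the generic stalk down to the stalk over $\Delta$ is exactly the abstract identification of $V\bigl(\bm\lambda_{(1)}\bigr)\otimes\cdots\otimes V\bigl(\bm\lambda_{(r)}\bigr)$ with $V(\lambda_1)\otimes\cdots\otimes V(\lambda_n)$ that underlies the statement of the proposition.

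To finish, I would use flatness of $\pi$ to conclude that the refined Gysin pullback along the closed embedding $\Delta\hookrightarrow\Delta_{\mathbf n}$ sends the class of $\mathcal X(\mathbf Z'',\mathbf n)$ to the class of the intersection product $\mathcal X(\mathbf Z'',\mathbf n)\cdot\BDConv_n^{\bm\lambda}\bigl|_\Delta$ in the ambient space $\BDConv_n^{\bm\lambda}\bigl|_{\Delta_{\mathbf n}}$. Since the irreducible components of this intersection are exactly the cycles $\mathcal Y(\mathbf Z')$ with $\mathbf Z'\in\mathscr Z(\bm\lambda)_\mu$, Fulton's theory yields an expansion $\sum_{\mathbf Z'}b_{\mathbf Z',\mathbf Z''}\,\langle\mathcal Y(\mathbf Z')\rangle$ whose coefficients are the intersection multiplicities. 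Combining this with the Satake identification in the previous paragraph gives the claimed formula.

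The hard part is the Satake-compatibility statement in the middle step: one must verify that the specialization of Borel–Moore homology classes on $\BDConv_n^{\bm\lambda}\cap\dot T_\mu$, going from the distinct-points locus down to the small diagonal, coincides with the tensor-product identification of MV weight spaces. This is really a global/fusion version of the Mirković–Vilonen semi-infinite transversality and rests on the equidimensionality of the intersections $\overline{\Gr^{\lambda_i}}\cap T_{\mu_i}$ together with the compatibility of the Iwasawa stratification with the BD factorization; we would import it from \cite{BaumannGaussentLittelmann} rather than re-prove it from scratch.
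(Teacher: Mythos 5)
The paper does not prove this proposition: it is quoted verbatim from \cite{BaumannGaussentLittelmann}, Proposition~5.10, and the present text simply cites that reference. So there is no ``paper's own proof'' to compare against; what you have written is an attempt to reconstruct the argument that \cite{BaumannGaussentLittelmann} gives.

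Your reconstruction is a sound outline and matches the mechanism in \cite{BaumannGaussentLittelmann}: specialize inside $\BDConv_n^{\bm\lambda}\bigl|_{\Delta_{\mathbf n}}\to\Delta_{\mathbf n}$, use the factorization property over the open locus of distinct points and K\"unneth to recognize $\bigl\langle\mathbf Z''_{(1)}\bigr\rangle\otimes\cdots\otimes\bigl\langle\mathbf Z''_{(r)}\bigr\rangle$ as the generic-fiber fundamental class of $\mathcal X(\mathbf Z'',\mathbf n)$, then compare with the fiber over $\Delta$ via the specialization map, and compute the specialization by Fulton's intersection theory as a refined Gysin pullback along $\Delta\hookrightarrow\Delta_{\mathbf n}$. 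Two small cautions. First, ``the family is flat'' is not quite the right formulation: what one needs is that $\bigl(\BDConv_n^{\bm\lambda}\cap\dot T_\mu\bigr)\bigl|_{\Delta_{\mathbf n}}$ is equidimensional with irreducible components exactly the $\mathcal X(\mathbf Z,\mathbf n)$, and that the semi-infinite strata are suitably transverse to the fibers of $\pi$; this is what guarantees the dimension count that makes the specialization map send fundamental classes to well-defined cycles with multiplicities, and it is a genuine input, not a consequence of flatness of the BD Grassmannian itself (which is not flat in the naive sense). Second, you yourself flag that the central step, the compatibility of the specialization map with the geometric Satake identification $V(\bm\lambda)_\mu\cong H^{\BM}_{2\rho(|\bm\lambda|-\mu)}$ on each stratum, is imported from \cite{BaumannGaussentLittelmann}. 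Since that is exactly the content of their Proposition~5.10, your proposal is circular as an independent proof, but it is an accurate and useful expansion of why the cited statement holds and what geometric ingredients it rests on.
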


\subsection{Projecting onto Cartan components}
\label{ss:ProjCart}
We continue with the setup of the previous section. First let $n$ be
a positive integer, let $\bm\lambda\in(\Lambda^+)^n$, and denote by
$p:V(\bm\lambda)\to V(|\bm\lambda|)$ the projection onto the Cartan
component, with kernel the sum of the other isotypical components of
$V(\bm\lambda)$.

The map $m_n:\Gr_n\to\Gr$ restricts to an isomorphism
$\Gr_n^{\bm\lambda}\cap(m_n)^{-1}\bigl(\Gr^{|\bm\lambda|}\bigr)\to
\Gr^{|\bm\lambda|}$ (see \cite{Haines}, p.~2110). Given $\mu\in\Lambda$
and $\mathbf Z\in\mathscr Z(\bm\lambda)_\mu$, we define $|\mathbf Z|$
to be the closure in $T_\mu$ of $m_n(\mathbf Z)\cap\Gr^{|\bm\lambda|}$.

The following proposition is a direct consequence of Theorem~3.3 in
\cite{BaumannGaussentLittelmann} and its proof.

\begin{proposition}
\label{pr:CompatIsot}
\begin{enumerate}
\item
The map $\mathbf Z\mapsto|\mathbf Z|$ defines a bijection
$\bigl\{\mathbf Z\in\mathscr Z(\bm\lambda)\bigm||
\mathbf Z|\neq\varnothing\bigr\}\to\mathscr Z(|\bm\lambda|)$.
\item
Let $\mathbf Z\in\mathscr Z(\bm\lambda)$. If $|\mathbf Z|\neq\varnothing$,
then $p(\langle\mathbf Z\rangle)=\bigl\langle|\mathbf Z|\bigr\rangle$;
otherwise $p(\langle\mathbf Z\rangle)=0$.
\end{enumerate}
\end{proposition}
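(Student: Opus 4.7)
The plan is to reduce the proposition to Theorem~3.3 of \cite{BaumannGaussentLittelmann}, which describes MV cycles in a convolution variety under the isotypical decomposition. The essential geometric input is that the multiplication map $m_n:\overline{\Gr_n^{\bm\lambda}}\to\overline{\Gr^{|\bm\lambda|}}$ is proper and semi-small, and restricts to an isomorphism over the open stratum $\Gr^{|\bm\lambda|}$. Through geometric Satake, the decomposition theorem applied to $(m_n)_*$ realizes the isotypical decomposition of $V(\bm\lambda)$, with the Cartan summand $V(|\bm\lambda|)=IH(\overline{\Gr^{|\bm\lambda|}},\underline{\mathbb C})$ corresponding to the top stratum $\nu=|\bm\lambda|$, and the projection $p$ being the projection onto this summand.

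For part~(i), I would argue that every MV cycle $\mathbf Z\in\mathscr Z(\bm\lambda)_\mu$, being an irreducible component of $\overline{\Gr_n^{\bm\lambda}}\cap(m_n)^{-1}(T_\mu)$ of dimension $\rho(|\bm\lambda|-\mu)$, either meets the open subset $(m_n)^{-1}\bigl(\Gr^{|\bm\lambda|}\bigr)$ in a dense subset or is disjoint from it. In the first case, $m_n$ identifies this open part with an irreducible locally closed subvariety of $\Gr^{|\bm\lambda|}\cap T_\mu$ of the same (maximal) dimension $\rho(|\bm\lambda|-\mu)$, and its closure in $T_\mu$ is the desired $|\mathbf Z|\in\mathscr Z(|\bm\lambda|)_\mu$. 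Injectivity is automatic because $m_n$ is bijective on the relevant open subset; surjectivity follows by pulling back an MV cycle in $\mathscr Z(|\bm\lambda|)$ along $m_n^{-1}$ and taking closure in $\overline{\Gr_n^{\bm\lambda}}\cap(m_n)^{-1}(T_\mu)$.

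For part~(ii), under the identification of weight spaces with top-dimensional Borel-Moore homology of stratum intersections, the projection $p$ coincides with the map on BM classes induced by $m_n$ restricted to the open stratum. Since $m_n$ is an isomorphism there, the pushforward sends the fundamental class of $\mathbf Z\cap(m_n)^{-1}\bigl(\Gr^{|\bm\lambda|}\bigr)$ to that of its image, whose closure in $T_\mu$ is precisely $|\mathbf Z|$; this yields $p(\langle\mathbf Z\rangle)=\bigl\langle|\mathbf Z|\bigr\rangle$. If instead $|\mathbf Z|=\varnothing$, then $\mathbf Z$ lies over $\overline{\Gr^{|\bm\lambda|}}\setminus\Gr^{|\bm\lambda|}$, hence inside the support of the strictly smaller isotypical summands, so $p(\langle\mathbf Z\rangle)=0$.

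The main subtlety is the precise translation between the top-dimensional Borel-Moore classes that define the MV basis and the decomposition-theorem summands that define the isotypical projection; in particular, one must verify that ``top BM homology of the preimage'' commutes with the summand extraction in the sense needed above. Both points are handled by Theorem~3.3 of \cite{BaumannGaussentLittelmann} and its proof, so the proof here reduces to reading off the $\nu=|\bm\lambda|$ component of that description and repackaging it in the form of the two statements above.
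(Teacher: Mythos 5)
Your proposal is correct and takes essentially the same route as the paper, which simply cites Theorem~3.3 of \cite{BaumannGaussentLittelmann} and its proof without further detail. Your filling-in of the geometric picture — $m_n$ proper and semi-small, isomorphism over $\Gr^{|\bm\lambda|}$, decomposition theorem realizing the isotypical splitting, with $p$ and the MV-basis compatibility read off at the level of top Borel-Moore homology of the $T_\mu$-strata — is the standard expansion of that citation and is consistent with what the cited theorem provides.
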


Now let $\mathbf n=(n_1,\ldots,n_r)$ be a composition of $n$ in $r$ parts.
We again write $\bm\lambda$ as a concatenation $\bigl(\bm\lambda_{(1)},
\ldots,\bm\lambda_{(r)}\bigr)$, where each $\bm\lambda_{(j)}$ belongs
to $(\Lambda^+)^{n_j}$, and set $\|\bm\lambda\|=\bigl(\bigl|
\bm\lambda_{(1)}\bigr|,\ldots,\bigl|\bm\lambda_{(r)}\bigr|\bigr)$; then
$$V(\|\bm\lambda\|)=V\bigl(\bigl|\bm\lambda_{(1)}\bigr|\bigr)
\otimes\cdots\otimes V\bigl(\bigl|\bm\lambda_{(r)}\bigr|\bigr).$$
For each $j\in\{1,\ldots,r\}$, we denote by $p_{(j)}:V\bigl(
\bm\lambda_{(j)}\bigr)\to V\bigl(\bigl|\bm\lambda_{(j)}\bigr|\bigr)$
the projection onto the Cartan component and define
$$\mathbf p=p_{(1)}\otimes\cdots\otimes p_{(r)};$$
thus $\mathbf p:V(\bm\lambda)\to V(\|\bm\lambda\|)$.

Likewise, we again write each tuple $\mathbf Z\in\mathscr Z(\bm\lambda)$
as a concatenation $\bigl(\mathbf Z_{(1)},\ldots,\mathbf Z_{(r)}\bigr)$
with $\mathbf Z_{(j)}\in\mathscr Z(\bm\lambda_{(j)})$ and set
$\|\mathbf Z\|=\bigl(\bigl|\mathbf Z_{(1)}\bigr|,\ldots,
\bigl|\mathbf Z_{(r)}\bigr|\bigr)$.

\begin{proposition}
\label{pr:ProjCart}
Let $\mathbf Z\in\mathscr Z(\bm\lambda)$.
If $\bigl|\mathbf Z_{(j)}\bigr|\neq\varnothing$
for all $j\in\{1,\ldots,r\}$, then
$\mathbf p(\langle\mathbf Z\rangle)=\bigl\langle\|\mathbf Z\|\bigr\rangle$;
otherwise $p(\langle\mathbf Z\rangle)=0$.
\end{proposition}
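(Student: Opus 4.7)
\textit{Plan.}

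My plan is to deduce the proposition from Proposition~\ref{pr:CompatIsot} (the single-block case~$r=1$), using Proposition~\ref{pr:TransMat} to pass between the MV basis of $V(\bm\lambda)$ and the tensor-product basis of $V(\bm\lambda_{(1)})\otimes\cdots\otimes V(\bm\lambda_{(r)})$. The key point is that $\mathbf p=p_{(1)}\otimes\cdots\otimes p_{(r)}$ acts factor-by-factor on the tensor-product basis, so that Proposition~\ref{pr:CompatIsot} governs each factor directly; the whole task is to keep track of the two transition matrices involved.

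Concretely, I would first invert the unitriangular transition matrix of Proposition~\ref{pr:TransMat} applied to $V(\bm\lambda)$ with composition $\mathbf n$, in order to write $\langle\mathbf Z\rangle$ as a linear combination of elements $\langle\mathbf Z''_{(1)}\rangle\otimes\cdots\otimes\langle\mathbf Z''_{(r)}\rangle$. Next, I would apply $\mathbf p$ block by block, using Proposition~\ref{pr:CompatIsot} to replace each $p_{(j)}(\langle\mathbf Z''_{(j)}\rangle)$ by $\langle|\mathbf Z''_{(j)}|\rangle$ (or by $0$, in which case the whole summand vanishes). Finally, I would re-express the answer in the MV basis of $V(\|\bm\lambda\|)$ via a second application of Proposition~\ref{pr:TransMat}, this time to the target with the trivial composition $(1,\ldots,1)$. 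The proposition then reduces to an identity between the source and target transition matrices compatible with the map $\mathbf Z\mapsto\|\mathbf Z\|$ on indexing sets.

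The main obstacle, as I anticipate it, is precisely this transition-matrix identity. I plan to tackle it geometrically: there should be a ``collapse'' morphism $\BDConv_n^{\bm\lambda}\bigl|_{\Delta_{\mathbf n}}\to\bigsqcup_{\bm\mu\leq\|\bm\lambda\|}\BDConv_r^{\bm\mu}$ induced fiberwise by the partial multiplications $m_{n_j}:\Gr_{n_j}\to\Gr$ on each block. This morphism restricts to an isomorphism on the open stratum where each $m_{n_j}$ is itself an isomorphism---precisely the locus selected by the conditions $|\mathbf Z_{(j)}|\neq\varnothing$---and under it, the cycles $\mathcal X(\mathbf Z,\mathbf n)$, $\mathcal Y(\mathbf W)$ and $\BDConv_n^{\bm\lambda}\bigl|_\Delta$ of the source setup correspond to $\mathcal X(\|\mathbf Z\|,(1,\ldots,1))$, $\mathcal Y(\|\mathbf W\|)$ and $\BDConv_r^{\|\bm\lambda\|}\bigl|_\Delta$ on the target, with matching intersection multiplicities. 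Making this geometric correspondence precise---and so pinning down the identity between the source and target transition matrices---is the real technical difficulty.
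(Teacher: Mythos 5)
Your plan is essentially the paper's own proof: the paper introduces the same block-wise multiplication morphism $\mathbf{m_n}:\BDConv_n\bigl|_{\Delta_{\mathbf n}}\to\BDConv_r$, uses the open subset where it is an isomorphism together with the local nature of intersection multiplicities to match the two transition matrices of Proposition~\ref{pr:TransMat} on $\mathring{\mathscr Z}(\bm\lambda)$, and then invokes Proposition~\ref{pr:CompatIsot} block by block. The only cosmetic differences are that the paper avoids inverting the transition matrix (it defines $\widetilde{\mathbf p}$ directly and shows it equals $\mathbf p$ on the tensor-product basis), and it spells out a short separate argument (containment of $\mathcal X(\mathbf Z'',\mathbf n)$ in a closed subset missed by $\mathcal Y(\mathbf Z')$) for the vanishing of $b_{\mathbf Z',\mathbf Z''}$ when $\mathbf Z''\notin\mathring{\mathscr Z}(\bm\lambda)$, a case your sketch gestures at but does not detail.
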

\begin{proof}
Let $\mathring{\mathscr Z}(\bm\lambda)$ be the set of all
$\mathbf Z\in\mathscr Z(\bm\lambda)$ such that
$\bigl|\mathbf Z_{(j)}\bigr|\neq\varnothing$ for all $j\in\{1,\ldots,r\}$;
then the map $\mathbf Z\mapsto\|\mathbf Z\|$ realizes a bijection from
$\mathring{\mathscr Z}(\bm\lambda)$ onto $\mathscr Z(\|\bm\lambda\|)$.

We fix a weight $\mu\in\Lambda$ and introduce the transition matrices
$(b_{\mathbf Z',\mathbf Z''})$ and $(a_{\mathbf Y',\mathbf Y''})$,
where $(\mathbf Z',\mathbf Z'')\in(\mathscr Z(\bm\lambda)_\mu)^2$
and $(\mathbf Y',\mathbf Y'')\in(\mathscr Z(\|\bm\lambda\|)_\mu)^2$,
that encode the expansions
$$\bigl\langle\mathbf Z''_{(1)}\bigr\rangle\otimes\cdots\otimes
\bigl\langle\mathbf Z''_{(r)}\bigr\rangle=\sum_{\mathbf Z'\in\mathscr Z
(\bm\lambda)_\mu}b_{\mathbf Z',\mathbf Z''}\;\langle\mathbf Z'\rangle$$
and
$$\bigl\langle Y''_1\bigr\rangle\otimes\cdots\otimes
\bigl\langle Y''_r\bigr\rangle=\sum_{\mathbf Y'\in\mathscr Z
(\|\bm\lambda\|)_\mu}a_{\mathbf Y',\mathbf Y''}\;\langle\mathbf Y'\rangle$$
on the MV bases of $V(\bm\lambda)$ and $V(\|\bm\lambda\|)$.
We claim that if $\mathbf Z'\in\mathring{\mathscr Z}(\bm\lambda)$, then
\begin{equation}
\label{eq:ClaimTM}
b_{\mathbf Z',\mathbf Z''}=
\begin{cases}
a_{\|\mathbf Z'\|,\|\mathbf Z''\|}&
\text{if $\mathbf Z''\in\mathring{\mathscr Z}(\bm\lambda)$,}\\[2pt]
0&\text{otherwise.}
\end{cases}
\end{equation}
Assuming \eqref{eq:ClaimTM}, we conclude the proof as follows. Let
$\widetilde{\mathbf p}:V(\bm\lambda)\to V(\|\bm\lambda\|)$ be the linear
map defined by the requirement that for all
$\mathbf Z\in\mathscr Z(\bm\lambda)$,
$$\widetilde{\mathbf p}(\langle\mathbf Z\rangle)=
\begin{cases}
\bigl\langle\|\mathbf Z\|\bigr\rangle&
\text{if $\mathbf Z\in\mathring{\mathscr Z}(\bm\lambda)$,}\\[2pt]
0&\text{otherwise.}
\end{cases}$$
Then \eqref{eq:ClaimTM} gives
$$\widetilde{\mathbf p}\bigl(\bigl\langle\mathbf Z_{(1)}\bigr\rangle
\otimes\cdots\otimes\bigl\langle\mathbf Z_{(r)}\bigr\rangle\bigr)=
\begin{cases}
\bigl\langle|\mathbf Z_{(1)}|\bigr\rangle\otimes\cdots\otimes
\bigl\langle|\mathbf Z_{(r)}|\bigr\rangle&
\text{if $\mathbf Z\in\mathring{\mathscr Z}(\bm\lambda)$,}\\[2pt]
0&\text{otherwise,}
\end{cases}$$
and from Proposition~\ref{pr:CompatIsot}, we conclude that
$\widetilde{\mathbf p}=\mathbf p$.

We are thus reduced to prove~\eqref{eq:ClaimTM}. We define a map
$\mathbf{m_n}:\BDConv_n\bigl|_{\Delta_{\mathbf n}}\to\BDConv_r$ by
$$\mathbf{m_n}(\mathbf x;[g_1,\ldots,g_n])=
(x_1,\ldots,x_r;[g_1\cdots g_{n_1},\ g_{n_1+1}\cdots
g_{n_1+n_2},\ \ldots,\ g_{n_1+\ldots+n_{r-1}+1}\cdots g_n])$$
for $\mathbf x$ as in \eqref{eq:PartDiag}. Then
$\mathcal U=\BDConv_n^{\bm\lambda}\bigl|_{\Delta_{\mathbf n}}\cap
(\mathbf{m_n})^{-1}\Bigl(\BDConv_r^{\|\bm\lambda\|}\Bigr)$ is an open
subset of $\BDConv_n^{\bm\lambda}\bigl|_{\Delta_{\mathbf n}}$ and
$\mathbf{m_n}$ restricts to an isomorphism
$\mathcal U\to\BDConv_r^{\|\bm\lambda\|}$.

Let $(\mathbf Z',\mathbf Z'')\in(\mathscr Z(\bm\lambda)_\mu)^2$.
By Proposition~\ref{pr:TransMat}, the
coefficient $b_{\mathbf Z',\mathbf Z''}$ is the multiplicity of
$\mathcal Y(\mathbf Z')$ in the intersection product
$\mathcal X(\mathbf Z'',\mathbf n)\,\cdot
\bigl(\BDConv_n^{\bm\lambda}\bigr)\bigl|_\Delta$ computed in the
ambient space $\BDConv_n^{\bm\lambda}\bigl|_{\Delta_{\mathbf n}}$.

Assume first that both $\mathbf Z'$ and $\mathbf Z''$ lie in
$\mathring{\mathscr Z}(\bm\lambda)$. Then the open subset $\mathcal U$
meets $\mathcal Y(\mathbf Z')$ and $\mathcal X(\mathbf Z'',\mathbf n)$.
Since intersection multiplicities are of local nature,
$b_{\mathbf Z',\mathbf Z''}$ is the multiplicity of
$\mathcal Y(\mathbf Z')\cap\mathcal U$ in the intersection product
$\bigl(\mathcal X(\mathbf Z'',\mathbf n)\cap\mathcal U\bigr)\,\cdot
\mathcal U\bigl|_\Delta$ computed in the ambient space
$\mathcal U\bigl|_{\Delta_{\mathbf n}}$. On the other hand,
Proposition \ref{pr:TransMat} for the composition $(1^r)=(1,\ldots,1)$
of $r$ gives that $a_{\|\mathbf Z'\|,\|\mathbf Z''\|}$ is
the multiplicity of $\mathcal Y(\|\mathbf Z'\|)$ in the intersection
product $\mathcal X(\|\mathbf Z''\|,(1^r))\,\cdot
\bigl(\BDConv_r^{\|\bm\lambda\|}\bigr)\bigl|_\Delta$ computed in the
ambient space $\BDConv_r^{\|\bm\lambda\|}$.
Observing that
$$\mathbf{m_n}\bigl(\mathcal Y(\mathbf Z')\cap\mathcal U\bigr)
=\mathcal Y(\|\mathbf Z'\|)\quad\text{and}\quad
\mathbf{m_n}\bigl(\mathcal X(\mathbf Z'',\mathbf n)\cap
\mathcal U\bigr)=\mathcal X(\|\mathbf Z''\|,(1^r)),$$
we conclude that
$b_{\mathbf Z',\mathbf Z''}=a_{\|\mathbf Z'\|,\|\mathbf Z''\|}$
in this case.

Now assume that $\mathbf Z'$ is in $\mathring{\mathscr Z}(\bm\lambda)$
but not $\mathbf Z''$. Then there exists $j\in\{1,\ldots,r\}$ such that
$\mathbf Z''_{(j)}$ is contained in
$F=\overline{\Gr_{n_j}^{\bm\lambda_{(j)}}}
\setminus(m_{n_j})^{-1}\bigl(\Gr^{|\bm\lambda_{(j)}|}\bigr)$.
For $x\in\mathbb C$, denote by $\widehat F_{\,|x}$ the set of all
tuples $\bigl(g_{1|x},\ldots,g_{n_j|x}\bigr)$ where
$$(g_1,\ldots,g_{n_j})\in\bigl(G^\vee\bigl(\mathbb C\bigl[z,z^{-1}\bigr]
\bigr)\bigr)^{n_j}\quad\text{and}\quad[g_1,\ldots,g_{n_j}]\in F;$$
denote by $\mathcal F$ the subset of
$\BDConv_n^{\bm\lambda}\bigl|_{\Delta_{\mathbf n}}$ consisting
of all pairs $(\mathbf x;[g_1,\ldots,g_n])$ such that
$$(g_{n_1+\cdots+n_{j-1}+1},\ldots,g_{n_1+\cdots+n_j})\in
\widehat F_{\,|x_j}$$
where $\mathbf x$ is written as in \eqref{eq:PartDiag}.
Then $F$ is closed in $\overline{\Gr_{n_j}^{\bm\lambda_{(j)}}}$ and
$\mathcal X(\mathbf Z'',\mathbf n)$ is contained in $\mathcal F$.
As $\mathcal Y(\mathbf Z')$ is not contained in $\mathcal F$,
it is not contained in $\mathcal X(\mathbf Z'',\mathbf n)$,
so here $b_{\mathbf Z',\mathbf Z''}=0$.
\end{proof}

\subsection{Truncation}
\label{ss:Trunc}
In this section, we come back to the setup of sect.~\ref{ss:TransMat}
and record a property which will simplify our analysis.

We fix nonnegative integers $n_1$, $n_2$, $n_3$ and tuples
$\bm\lambda_{(1)}\in(\Lambda^+)^{n_1}$,
$\bm\lambda_{(2)}\in(\Lambda^+)^{n_2}$,
$\bm\lambda_{(3)}\in(\Lambda^+)^{n_3}$.
We define $\bm\lambda$ to be the concatenation
$\bigl(\bm\lambda_{(1)},\bm\lambda_{(2)},\bm\lambda_{(3)}\bigr)$ and we
regard elements $\mathbf Z\in\mathscr Z(\bm\lambda)$ as concatenations
$\bigl(\mathbf Z_{(1)},\mathbf Z_{(2)},\mathbf Z_{(3)}\bigr)$
where each $\mathbf Z_{(j)}$ belongs to $\mathscr Z(\bm\lambda_{(j)})$.
If $\nu\in\Lambda$ and $\mathbf Z\in\mathscr Z(\bm\lambda_{(3)})_\nu$,
then we set $\wt\mathbf Z=\nu$.

We fix a weight $\mu\in\Lambda$ and introduce the transition
matrix $(a_{\mathbf Z',\mathbf Z''})$,
where $(\mathbf Z',\mathbf Z'')\in(\mathscr Z(\bm\lambda)_\mu)^2$,
that encodes the expansions
$$\bigl\langle\mathbf Z''_{(1)}\bigr\rangle\otimes\bigl\langle
\bigl(\mathbf Z''_{(2)},\mathbf Z''_{(3)}\bigr)\bigr\rangle=
\sum_{\mathbf Z'\in\mathscr Z(\bm\lambda)_\mu}
a_{\mathbf Z',\mathbf Z''}\;\bigl\langle\bigl(\mathbf Z'_{(1)},
\mathbf Z'_{(2)},\mathbf Z'_{(3)}\bigr)\bigr\rangle$$
on the MV basis of $V(\bm\lambda)$.

\begin{proposition}
\label{pr:Trunc}
\begin{enumerate}
\item
Let $(\mathbf Z',\mathbf Z'')\in(\mathscr Z(\bm\lambda)_\mu)^2$.
If $a_{\mathbf Z',\mathbf Z''}\neq0$, then either $\wt\mathbf Z'_{(3)}
<\wt\mathbf Z''_{(3)}$ or $\mathbf Z'_{(3)}=\mathbf Z''_{(3)}$.
\item
Let $\mathbf Z''\in\mathscr Z(\bm\lambda)_\mu$. Then
$$\bigl\langle\mathbf Z''_{(1)}\bigr\rangle\otimes
\bigl\langle\mathbf Z''_{(2)}\bigr\rangle=
\sum_{\substack{\mathbf Z'\in\mathscr Z(\bm\lambda)_\mu\\[2pt]
\mathbf Z'_{(3)}=\mathbf Z''_{(3)}}}
a_{\mathbf Z',\mathbf Z''}\;\bigl\langle\bigl(\mathbf Z'_{(1)},
\mathbf Z'_{(2)}\bigr)\bigr\rangle$$
in $V\bigl(\bm\lambda_{(1)}\bigr)\otimes V\bigl(\bm\lambda_{(2)}\bigr)$.
\end{enumerate}
\end{proposition}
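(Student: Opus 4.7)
The plan is to apply Proposition \ref{pr:TransMat} to the composition $\mathbf n = (n_1, n_2+n_3)$: the coefficient $a_{\mathbf Z',\mathbf Z''}$ is then the multiplicity of $\mathcal Y(\mathbf Z')$ in the intersection product $\mathcal X(\mathbf Z'',\mathbf n) \cdot \BDConv_n^{\bm\lambda}\bigl|_\Delta$, computed inside the ambient space $\BDConv_n^{\bm\lambda}\bigl|_{\Delta_{\mathbf n}}$. Both statements are then derived from this geometric picture.

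For part (i), the key observation is that on the generic locus of $\mathcal X(\mathbf Z'',\mathbf n)$, the last $n_3$ entries $g_{n_1+n_2+j}$ lie in $\widetilde Z''_{n_1+n_2+j\,|\,y_2}$, where $y_2$ denotes the common coordinate of the second block. As one specializes to the small diagonal (colliding $y_1$ into $y_2$), the irreducible components $\mathcal Y(\mathbf Z')$ of the intersection carry Braverman-Gaitsgory labels $\mathbf Z'_{(3)}$ for the third block constrained by the Iwasawa-type stratification: either the label remains equal to $\mathbf Z''_{(3)}$, or weight mass flows from the third block into the earlier ones under the convolution, producing the strict inequality $\wt \mathbf Z'_{(3)} < \wt \mathbf Z''_{(3)}$. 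I would establish this by working in local coordinates near a generic point of the small diagonal, analyzing the filtration by the strata $\widetilde T_\nu$ along the last $n_3$ entries, and invoking the irreducibility of MV cycles to rule out the equal-weight-different-cycle case.

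For part (ii), I would deduce it from (i) by localizing the intersection product. Let $\mathcal U \subset \BDConv_n^{\bm\lambda}\bigl|_{\Delta_{\mathbf n}}$ denote the open subset on which the last $n_3$ entries meet the open orbit $T_{\wt \mathbf Z''_{(3)}}$. By (i), the cycles $\mathcal Y(\mathbf Z')$ with $\wt \mathbf Z'_{(3)} < \wt \mathbf Z''_{(3)}$ do not meet $\mathcal U$, so only the terms with $\mathbf Z'_{(3)} = \mathbf Z''_{(3)}$ contribute to intersection multiplicities computed on $\mathcal U$. Moreover, a natural construction that "forgets the last $n_3$ factors" realizes $\mathcal U$ as an open piece fibered over the smaller BD convolution $\BDConv_{n_1+n_2}^{(\bm\lambda_{(1)},\bm\lambda_{(2)})}\bigl|_{\Delta_{(n_1,n_2)}}$, and under this identification $\mathcal Y(\mathbf Z')\cap\mathcal U$ (with $\mathbf Z'_{(3)} = \mathbf Z''_{(3)}$) corresponds to $\mathcal Y((\mathbf Z'_{(1)},\mathbf Z'_{(2)}))$ while $\mathcal X(\mathbf Z'',\mathbf n)\cap\mathcal U$ corresponds to $\mathcal X((\mathbf Z''_{(1)},\mathbf Z''_{(2)}),(n_1,n_2))$. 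Intersection multiplicities being local, Proposition \ref{pr:TransMat} applied in the smaller variety identifies $a_{\mathbf Z',\mathbf Z''}$ (for $\mathbf Z'_{(3)} = \mathbf Z''_{(3)}$) with the coefficient of $\langle(\mathbf Z'_{(1)},\mathbf Z'_{(2)})\rangle$ in the expansion of $\langle\mathbf Z''_{(1)}\rangle\otimes\langle\mathbf Z''_{(2)}\rangle$ on the MV basis of $V(\bm\lambda_{(1)})\otimes V(\bm\lambda_{(2)})$, which is precisely the identity in (ii).

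The principal obstacle is clearly part (i): carrying out the geometric analysis of the BD-to-small-diagonal specialization and proving the weight-flow constraint. Precise control of the Braverman-Gaitsgory labels in the degenerations of $\mathcal X(\mathbf Z'',\mathbf n)$ is the technically subtle step, and once it is in place, the localization argument for (ii) is essentially formal.
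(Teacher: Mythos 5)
Your proof takes a genuinely different route from the paper's, and it leaves the hardest step unproved.

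The paper's argument is purely algebraic: it does not invoke Proposition~\ref{pr:TransMat} or any new intersection-theoretic analysis at all. Instead, it expands $\bigl\langle\mathbf Z''_{(1)}\bigr\rangle\otimes\bigl\langle\mathbf Z''_{(2)}\bigr\rangle$ on the MV basis of $V(\bm\lambda_{(1)})\otimes V(\bm\lambda_{(2)})$, then cites Theorem~5.13 of \cite{BaumannGaussentLittelmann} twice to rewrite, modulo $V(\bm\lambda_{(1)})\otimes V(\bm\lambda_{(2)})\otimes V(\bm\lambda_{(3)})_{<\nu}$ (with $\nu=\wt\mathbf Z''_{(3)}$), the left-hand side $\bigl\langle\mathbf Z''_{(1)}\bigr\rangle\otimes\bigl\langle\bigl(\mathbf Z''_{(2)},\mathbf Z''_{(3)}\bigr)\bigr\rangle$ as $\sum_{\mathbf Z}c_{\mathbf Z,\mathbf Z''}\,\bigl\langle(\mathbf Z,\mathbf Z''_{(3)})\bigr\rangle$, and finally cites Proposition~5.11 of the same reference to identify the ``error space'' with the span of the $\langle\mathbf Z'\rangle$ with $\wt\mathbf Z'_{(3)}<\nu$. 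Both (i) and (ii) drop out simultaneously from this comparison of expansions. What the paper buys by this route is that all the geometric content (how the Braverman--Gaitsgory labels behave under specialization to the small diagonal) is packaged once and for all in the cited theorems, and the proof of Proposition~\ref{pr:Trunc} is a short formal manipulation.

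Your proposal, by contrast, goes back to the Beilinson--Drinfeld picture and tries to re-derive that geometric content on the spot. The sticking point is precisely the claim you acknowledge as ``the technically subtle step'': showing that on the degeneration the third-block label either strictly loses weight or is preserved exactly. That is essentially what Theorem~5.13 of \cite{BaumannGaussentLittelmann} asserts, and your sketch (``work in local coordinates, analyze the $T_\nu$ stratification, invoke irreducibility of MV cycles to rule out the equal-weight-different-cycle case'') is not an argument --- nothing about irreducibility per se prevents two distinct MV cycles of the same weight from appearing; some concrete control of the degeneration is required and you have not supplied it. The localization step for (ii) has the same flavor as the paper's own proof of Proposition~\ref{pr:ProjCart}, so it is plausible, but your assertion that the ``forget the last $n_3$ factors'' map transfers intersection multiplicities faithfully would still need to be verified (one must check that over the open locus $\mathcal U$ the relevant cycles pull back from the smaller BD convolution, so that the multiplicity computation genuinely descends). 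In short: the approach is not wrong in spirit, but it replaces two black-box citations by a geometric analysis that is precisely the content of those citations, and that analysis is left unexecuted. You would do better to observe, as the paper does, that the needed triangularity and weight-space facts are already available in \cite{BaumannGaussentLittelmann} and reduce the proposition to a comparison of basis expansions.
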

\begin{proof}
Let $\mathbf Z''\in\mathscr Z(\bm\lambda)_\mu$ and set $\nu=\wt Z''_{(3)}$.
We can expand
$$\bigl\langle\mathbf Z''_{(1)}\bigr\rangle\otimes
\bigl\langle\mathbf Z''_{(2)}\bigr\rangle=
\sum_{\mathbf Z\in\mathscr Z(\bm\lambda_{(1)},\bm\lambda_{(2)})_{\mu-\nu}}
c_{\mathbf Z,\mathbf Z''}\;\langle\mathbf Z\rangle$$
on the MV basis of
$V\bigl(\bm\lambda_{(1)}\bigr)\otimes V\bigl(\bm\lambda_{(2)}\bigr)$.

We denote by $V\bigl(\bm\lambda_{(3)}\bigr){}^{}_{<\nu}$ the sum of the
$\xi$-weight subspaces of $V\bigl(\bm\lambda_{(3)}\bigr)$ with
$\xi<\nu$. By~Theorem~5.13
in~\cite{BaumannGaussentLittelmann},
$$\bigl\langle\mathbf Z''_{(2)}\bigr\rangle\otimes
\bigl\langle\mathbf Z''_{(3)}\bigr\rangle\equiv
\bigl\langle\bigl(\mathbf Z''_{(2)},\mathbf Z''_{(3)}\bigr)\bigr\rangle
\quad\bigl(\bmod\ V\bigl(\bm\lambda_{(2)}\bigr)\otimes
V\bigl(\bm\lambda_{(3)}\bigr){}^{}_{<\nu}\bigr)$$
and for each $\mathbf Z\in\mathscr Z(\bm\lambda_{(1)},\bm\lambda_{(2)})$,
$$\bigl\langle\mathbf Z\bigr\rangle\otimes
\bigl\langle\mathbf Z''_{(3)}\bigr\rangle\equiv
\bigl\langle\bigl(\mathbf Z,\mathbf Z''_{(3)}\bigr)\bigr\rangle
\quad\bigl(\bmod\ V\bigl(\bm\lambda_{(1)}\bigr)\otimes
V\bigl(\bm\lambda_{(2)}\bigr)\otimes
V\bigl(\bm\lambda_{(3)}\bigr){}^{}_{<\nu}\bigr).$$
Thus,
$$\sum_{\mathbf Z'\in\mathscr Z(\bm\lambda)_\mu}
a_{\mathbf Z',\mathbf Z''}\;\bigl\langle\bigl(\mathbf Z'_{(1)},
\mathbf Z'_{(2)},\mathbf Z'_{(3)}\bigr)\bigr\rangle\equiv
\sum_{\mathbf Z\in\mathscr Z(\bm\lambda_{(1)},\bm\lambda_{(2)})_{\mu-\nu}}
c_{\mathbf Z,\mathbf Z''}\;
\bigl\langle\bigl(\mathbf Z,\mathbf Z''_{(3)}\bigr)\bigr\rangle$$
modulo $V\bigl(\bm\lambda_{(1)}\bigr)\otimes
V\bigl(\bm\lambda_{(2)}\bigr)\otimes
V\bigl(\bm\lambda_{(3)}\bigr){}^{}_{<\nu}$. We conclude by noting, by
means of Proposition~5.11 in~\cite{BaumannGaussentLittelmann}, that the
latter space is spanned by the basis vectors $\langle\mathbf Z'\rangle$
such that $\wt\mathbf Z'_{(3)}<\nu$.
\end{proof}

\section{Geometry}
\label{se:Geometry}
In this section, we prove that the MV basis of the tensor powers of the
natural representation of $G=\SL_2(\mathbb C)$ is the basis $(y_w)$ from
sect.~\ref{se:CombLin}. As a matter of fact, by Theorem~5.13
in~\cite{BaumannGaussentLittelmann}, the MV basis satisfies the first
equation in \eqref{eq:DefYw}, so we only have to prove that it satisfies
the second one too.

\subsection{Notation}
We endow $G$ with its usual maximal torus and Borel subgroup. The weight
lattice is represented as usual as the quotient $(\mathbb Z\varepsilon_1
\oplus\mathbb Z\varepsilon_2)/\mathbb Z(\varepsilon_1+\varepsilon_2)$.
The fundamental weight $\varpi$ is the image of $\varepsilon_1$ in this
quotient. The notation $\Gr$ indicates the affine Grassmannian of
$G^\vee=\PGL_2(\mathbb C)$.

In this section, $\bm\lambda$ will always be of the form
$(\varpi,\ldots,\varpi)$; the number $n$ of times $\varpi$ is repeated will
usually appears as a subscript in notation like $\Gr_n^{\bm\lambda}$ or
$\BDConv_n^{\bm\lambda}$.

The cell $\Gr^{\varpi}$ is isomorphic to the projective line, hence is
closed. The two MV cycles in $\mathscr Z(\varpi)$ are
$$Z_+=\Gr^{\varpi}\cap T_{\varpi}=\biggl\{\biggl[\begin{pmatrix}z&0\\
0&1\end{pmatrix}\biggr]\biggr\}\quad\text{and}\quad
Z_-=\Gr^{\varpi}\cap T_{-\varpi}=\biggl\{\biggl[\begin{pmatrix}1&0\\
a&z\end{pmatrix}\biggr]\biggm|a\in\mathbb C\biggr\}$$
(the matrices above should actually be viewed in
$\PGL_2\bigl(\mathbb C\bigl[z,z^{-1}\bigr]\bigr)$).
The standard basis of $V(\varpi)=\mathbb C^2$ is then
$(x_+,x_-)=(\langle Z_+\rangle,\langle Z_-\rangle)$.

Given a word $v\in\mathscr C_n$, we set
$$P(v)=\bigl\{\ell\in\{1,\ldots,n\}\bigm|v(\ell)=+\bigr\}\quad\text{and}
\quad\mathbf Z_v=\bigl(Z_{v(1)},\ldots,Z_{v(n)}\bigr).$$
Thanks to the bijection~\eqref{eq:ProdCycMV}, we regard $\mathbf Z_v$
as an element in $\mathscr Z(\bm\lambda)$.

For $(x,a)\in\mathbb C^2$, we set
$$\varphi_+(x,a)=\begin{pmatrix}z-x&a\\0&1\end{pmatrix}\quad\text{and}\quad
\varphi_-(x,a)=\begin{pmatrix}1&0\\a&z-x\end{pmatrix}.$$
Recall the notation introduced in sect.~\ref{ss:TransMat}.
For each word $v\in\mathscr C_n$, we define an embedding
$\upphi_v:\mathbb C^{2n}\to\BDConv_n^{\bm\lambda}$ by
$$\upphi_v(\mathbf x;\mathbf a)=\bigl(\mathbf x;\bigl[\varphi_{v(1)}
(x_1,a_1),\ldots,\varphi_{v(n)}(x_n,a_n)\bigr]\bigr)$$
where $\mathbf x=(x_1,\ldots,x_n)$ and $\mathbf a=(a_1,\ldots,a_n)$.
The image of $\upphi_v$ is an open subset $U_v$ and $\upphi_v$ can be
regarded as a chart on the manifold $\BDConv_n^{\bm\lambda}$. This
chart is designed so that $\dot{\mathcal X}(\mathbf Z_v)$ is the
algebraic subset of $U_v$ defined by the equations $a_\ell=0$ for
$\ell\in P(v)$ (compare with the construction presented
in~\cite{GaussentLittelmann}).

\subsection{The simplest example}
In this section, we consider the case $n=2$; the variety
$\BDConv_2^{\bm\lambda}$ has dimension $4$. The words $v=+-$ and
$w=-+$ give rise to charts $\upphi_v$ and $\upphi_w$ on
$\BDConv_2^{\bm\lambda}$ defined by
\begin{align*}
\upphi_v(x_1,x_2;a_1,a_2)&=\biggl(x_1,x_2;\biggl[
\begin{pmatrix}z-x_1&a_1\\0&1\end{pmatrix},
\begin{pmatrix}1&0\\a_2&z-x_2\end{pmatrix}\biggr]\biggr),\\[6pt]
\upphi_w(x_1,x_2;b_1,b_2)&=\biggl(x_1,x_2;\biggl[
\begin{pmatrix}1&0\\b_1&z-x_1\end{pmatrix},
\begin{pmatrix}z-x_2&b_2\\0&1\end{pmatrix}\biggr]\biggr).
\end{align*}

The transition map $(\upphi_w)^{-1}\circ\upphi_v$ is given by
$$b_1=1/a_1,\quad\;b_2=-a_1(x_2-x_1+a_1a_2)$$
on the domain
$$(\upphi_v)^{-1}(U_v\cap U_w)=\bigl\{(x_1,x_2,a_1,a_2)\in\mathbb C^4
\bigm|a_1\neq0\bigr\}.$$
We set $A=\mathbb C[x_1,x_2,a_1,a_2]$; this is the coordinate ring of
$(\upphi_v)^{-1}(U_v)$. We let $B=\mathscr S^{-1}A$ be the localization
of $A$ with respect to the multiplicative subset $\mathscr S$ generated
by~$a_1$; this is the coordinate ring of $(\upphi_v)^{-1}(U_v\cap U_w)$.

In the chart $\upphi_v$, the cycle $\mathcal Y(\mathbf Z_v)$ is defined
by the equations $a_1=x_1-x_2=0$, so the ideal in $A$ of the subvariety
$$V=(\upphi_v)^{-1}(\mathcal Y(\mathbf Z_v))$$
is
$$\mathfrak p=(a_1,x_1-x_2).$$
In the chart $\upphi_w$, the cycle $\dot{\mathcal X}(\mathbf Z_w)$
is defined by the equation $b_2=0$, and the closure in $U_v$
of $U_v\cap\dot{\mathcal X}(\mathbf Z_w)$ is
$U_v\cap\mathcal X(\mathbf Z_w,(1,1))$. Therefore the ideal in
$B$ of $(\upphi_v)^{-1}\bigl(U_v\cap\dot{\mathcal X}(\mathbf Z_w)\bigr)$
is $\mathring{\mathfrak q}=(-a_1(x_2-x_1+a_1a_2))$ and the ideal
in $A$ of the subvariety
$$X=(\upphi_v)^{-1}(U_v\cap\mathcal X(\mathbf Z_w,(1,1)))$$
is the preimage
$$\mathfrak q=(x_2-x_1+a_1a_2)$$
of $\mathring{\mathfrak q}$ under the canonical map $A\to B$.

Plainly $\mathfrak q\subset\mathfrak p$, which shows that $V\subset X$.
The local ring $\mathscr O_{V,X}$ of $X$ along $V$ is the localization
of $\overline A=A/\mathfrak q$ at the ideal
$\overline{\mathfrak p}=\mathfrak p/\mathfrak q$. Since $a_2$ is not
in $\mathfrak p$, its image in $\overline A_{\,\overline{\mathfrak p}}$
is invertible, and then we see that $x_1-x_2$ generates the maximal ideal
of $\overline A_{\,\overline{\mathfrak p}}$. As a consequence, the order
of vanishing of $x_1-x_2$ along $V$ (see~\cite{Fulton}, sect.~1.2) is
equal to one. By definition, this is the multiplicity of
$\mathcal Y(\mathbf Z_v)$ in the intersection product
$\mathcal X(\mathbf Z_w,(1,1))\cdot\,\BDConv_2^{\bm\lambda}\bigl|_\Delta$.

Proposition~\ref{pr:TransMat} then asserts that
$y_{+-}=\langle\mathbf Z_v\rangle$ occurs with coefficient one
in the expansion of
$x_w=\langle\mathbf Z_-\rangle\otimes\langle\mathbf Z_+\rangle$ on
the MV basis of $V(\varpi)^{\otimes2}$, in agreement with the equation
$$x_{-+}=y_{-+}+y_{+-}.$$
The proof of the general case follows the same pattern, but more
elaborate combinatorics is needed to manage the equations.

\subsection{Transition maps}
\label{ss:TransMaps}
Pick $v$, $w$ in $\mathscr C_n$. Set $P_0=S_0=1$ and $Q_0=R_0=0$.
For $\ell\in\{1,\ldots,n\}$, let $K_\ell=\mathbb C(x_1,\ldots,x_\ell,
a_1,\ldots,a_\ell)$ be the field of rational functions and define by
induction an element $b_\ell\in K_\ell$ and a matrix
$$\begin{pmatrix}P_\ell&Q_\ell\\R_\ell&S_\ell\end{pmatrix}$$
with coefficients in $K_\ell[z]$ and determinant one as follows:
\begin{itemize}
\item
If $(v(\ell),w(\ell))=(+,+)$, then
$$b_\ell=\frac{\bigl(a_\ell P_{\ell-1}+Q_{\ell-1}\bigr)\bigl(x_\ell\bigr)}
{\bigl(a_\ell R_{\ell-1}+S_{\ell-1}\bigr)\bigl(x_\ell\bigr)},\qquad
\left\{\begin{alignedat}2
P_\ell&=P_{\ell-1}-b_\ell R_{\ell-1},\quad\;&Q_\ell&=\frac{a_\ell
P_{\ell-1}+Q_{\ell-1}-b_\ell S_\ell}{z-x_\ell},\\
R_\ell&=(z-x_\ell)R_{\ell-1},\quad\;&S_\ell&=a_\ell R_{\ell-1}+S_{\ell-1}.
\end{alignedat}\right.$$
\item
If $(v(\ell),w(\ell))=(-,+)$, then
$$b_\ell=\frac{\bigl(P_{\ell-1}+a_\ell Q_{\ell-1}\bigr)\bigl(x_\ell\bigr)}
{\bigl(R_{\ell-1}+a_\ell S_{\ell-1}\bigr)\bigl(x_\ell\bigr)},\qquad
\left\{\begin{alignedat}2
P_\ell&=\frac{P_{\ell-1}+a_\ell Q_{\ell-1}-b_\ell R_\ell}{z-x_\ell},
\quad\;&Q_\ell&=Q_{\ell-1}-b_\ell S_{\ell-1},\\
R_\ell&=R_{\ell-1}+a_\ell S_{\ell-1},\quad\;&S_\ell&=(z-x_\ell)S_{\ell-1}.
\end{alignedat}\right.$$
\item
If $(v(\ell),w(\ell))=(+,-)$, then
$$b_\ell=\frac{\bigl(a_\ell R_{\ell-1}+S_{\ell-1}\bigr)\bigl(x_\ell\bigr)}
{\bigl(a_\ell P_{\ell-1}+Q_{\ell-1}\bigr)\bigl(x_\ell\bigr)},\qquad
\left\{\begin{alignedat}2
P_\ell&=(z-x_\ell)P_{\ell-1},\quad\;&Q_\ell&=a_\ell P_{\ell-1}+Q_{\ell-1},\\
R_\ell&=R_{\ell-1}-b_\ell P_{\ell-1},\quad\;&S_\ell&=\frac{a_\ell
R_{\ell-1}+S_{\ell-1}-b_\ell Q_\ell}{z-x_\ell}.
\end{alignedat}\right.$$
\item
If $(v(\ell),w(\ell))=(-,-)$, then
$$b_\ell=\frac{\bigl(R_{\ell-1}+a_\ell S_{\ell-1}\bigr)\bigl(x_\ell\bigr)}
{\bigl(P_{\ell-1}+a_\ell Q_{\ell-1}\bigr)\bigl(x_\ell\bigr)},\qquad
\left\{\begin{alignedat}2
P_\ell&=P_{\ell-1}+a_\ell Q_{\ell-1},\quad\;&Q_\ell&=(z-x_\ell)Q_{\ell-1},\\
R_\ell&=\frac{R_{\ell-1}+a_\ell S_{\ell-1}-b_\ell P_\ell}{z-x_\ell},
\quad\;&S_\ell&=S_{\ell-1}-b_\ell Q_{\ell-1}.
\end{alignedat}\right.$$
\end{itemize}

Since the matrix $\begin{pmatrix}P_{\ell-1}&Q_{\ell-1}\\
R_{\ell-1}&S_{\ell-1}\end{pmatrix}$ has determinant one,
the denominator in the fraction that defines $b_\ell$ is not the
zero polynomial and everything is well-defined.

\begin{proposition}
The transition map
$$(\upphi_w)^{-1}\circ\upphi_v:\upphi_v^{-1}(U_w)\to\upphi_w^{-1}(U_v)$$
is given by the rational map
$$(x_1,\ldots,x_n;a_1,\ldots,a_n)\mapsto(x_1,\ldots,x_n;b_1,\ldots,b_n)$$
where $b_1$, \dots, $b_n$ are defined above.
\end{proposition}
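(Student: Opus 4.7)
The approach is to produce explicit polynomial gauge transformations witnessing the equality of convolution classes. Concretely, we look for matrices $h_\ell=\bigl(\begin{smallmatrix}P_\ell&Q_\ell\\R_\ell&S_\ell\end{smallmatrix}\bigr)\in\SL_2(\mathbb C[z])$, for $\ell=0,\ldots,n$, with $h_0=I$, satisfying the intertwining relations
$$h_{\ell-1}\,\varphi_{v(\ell)}(x_\ell,a_\ell)=\varphi_{w(\ell)}(x_\ell,b_\ell)\,h_\ell,\qquad\ell=1,\ldots,n.$$
These say exactly that $(h_1,\ldots,h_n)\in G^\vee(\mathbb C[z])^n$ carries the class $[\varphi_{v(1)}(x_1,a_1),\ldots,\varphi_{v(n)}(x_n,a_n)]$ to $[\varphi_{w(1)}(x_1,b_1),\ldots,\varphi_{w(n)}(x_n,b_n)]$ in $\BDConv_n^{\bm\lambda}$. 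Since $\upphi_w$ is injective on its image, the $b_\ell$ are thereby uniquely determined, and the proposition reduces to verifying that the recursion from the statement is exactly what the intertwining identities force, step by step.

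We proceed by induction on $\ell$. Take the case $(v(\ell),w(\ell))=(+,+)$ as illustrative: expanding both sides of the intertwining identity,
$$h_{\ell-1}\,\varphi_+(x_\ell,a_\ell)=\begin{pmatrix}(z-x_\ell)P_{\ell-1}&a_\ell P_{\ell-1}+Q_{\ell-1}\\(z-x_\ell)R_{\ell-1}&a_\ell R_{\ell-1}+S_{\ell-1}\end{pmatrix},$$
while $\varphi_+(x_\ell,b_\ell)\,h_\ell$ has second row $(R_\ell,S_\ell)$ and first row $\bigl((z-x_\ell)P_\ell+b_\ell R_\ell,\ (z-x_\ell)Q_\ell+b_\ell S_\ell\bigr)$. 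Matching second rows forces $R_\ell=(z-x_\ell)R_{\ell-1}$ and $S_\ell=a_\ell R_{\ell-1}+S_{\ell-1}$; matching first rows forces $P_\ell=P_{\ell-1}-b_\ell R_{\ell-1}$ and $Q_\ell=\bigl(a_\ell P_{\ell-1}+Q_{\ell-1}-b_\ell S_\ell\bigr)/(z-x_\ell)$. The requirement $Q_\ell\in\mathbb C[z]$ is the single linear equation that the numerator vanish at $z=x_\ell$, whose unique solution for $b_\ell$ is precisely the formula stated in the proposition. The three remaining cases are obtained by entirely analogous computations; in each, exactly one entry of $h_\ell$ comes with a factor $z-x_\ell$ in its denominator, and requiring polynomiality imposes a single linear condition on $b_\ell$.

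Finally, since $\det\varphi_\pm(x_\ell,\cdot)=z-x_\ell$ on both sides of the intertwining identity, taking determinants yields $\det h_\ell=\det h_{\ell-1}$, so $\det h_\ell=1$ throughout, confirming that each $h_\ell$ genuinely lies in $\SL_2(\mathbb C[z])$. The denominator appearing in the formula for $b_\ell$ is the value at $x_\ell$ of the matrix entry whose polynomiality drove the computation; its non-vanishing cuts out precisely the open locus $\upphi_v^{-1}(U_v\cap U_w)$ on which the transition map is defined. No step presents a genuine obstacle; the only mild technical burden is organizing the bookkeeping of the four parallel cases, each of which reduces to the same kind of linear equation for~$b_\ell$.
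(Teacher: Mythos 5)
Your proof is correct and takes essentially the same approach as the paper: both exhibit the gauge transformations $h_\ell=\bigl(\begin{smallmatrix}P_\ell&Q_\ell\\R_\ell&S_\ell\end{smallmatrix}\bigr)$ and use the intertwining identity $h_{\ell-1}\,\varphi_{v(\ell)}(x_\ell,a_\ell)=\varphi_{w(\ell)}(x_\ell,b_\ell)\,h_\ell$ to conclude that $\upphi_v(\mathbf x;\mathbf a)=\upphi_w(\mathbf x;\mathbf b)$. The only cosmetic difference is direction of argument: the paper simply observes that the stated recursion was set up to make the intertwining identity hold, while you re-derive the recursion as the unique solution forced by demanding that all entries of $h_\ell$ be polynomial in $z$.
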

\begin{proof}
The definitions are set up so that
$$\varphi_{w(\ell)}(x_\ell,b_\ell)
\begin{pmatrix}P_\ell&Q_\ell\\R_\ell&S_\ell\end{pmatrix}
=\begin{pmatrix}P_{\ell-1}&Q_{\ell-1}\\R_{\ell-1}&S_{\ell-1}\end{pmatrix}
\varphi_{v(\ell)}(x_\ell,a_\ell)$$
and therefore
$$\Biggl(\prod_{j=1}^\ell\varphi_{w(j)}(x_j,b_j)\Biggr)
\begin{pmatrix}P_\ell&Q_\ell\\R_\ell&S_\ell\end{pmatrix}
=\Biggl(\prod_{j=1}^\ell\varphi_{v(j)}(x_j,a_j)\Biggr)$$
for each $\ell\in\{1,\ldots,n\}$. Thus, when complex values are assigned
to the indeterminates $x_1$, \dots, $x_n$, $a_1$, \dots, $a_n$, we get
$$\Biggl[\prod_{j=1}^\ell\varphi_{v(j)}(x_j,a_j)\Biggr]
=\Biggl[\prod_{j=1}^\ell\varphi_{w(j)}(x_j,b_j)\Biggr]$$
in $\PGL_2\bigl(\mathbb C\bigl[z,(z-x_1)^{-1},\ldots,(z-x_\ell)^{-1}
\bigr]\bigr)\,/\,\PGL_2(\mathbb C[z])$. This implies the equality
$$\upphi_v(x_1,\ldots,x_n;a_1,\ldots,a_n)=
\upphi_w(x_1,\ldots,x_n;b_1,\ldots,b_n)$$
in $\BDConv_n$.
\end{proof}

The parameters $b_\ell$ and the coefficients of the polynomials
$P_\ell$, $Q_\ell$, $R_\ell$, $S_\ell$ were defined as elements in
$K_\ell$. We can however be more precise and define recursively a
subring $B_\ell\subset K_\ell$ to which they belong: we start with
$B_0=\mathbb C$, and for $\ell\in\{1,\ldots,n\}$, we set
$B_\ell=B_{\ell-1}\bigl[x_\ell,a_\ell,f_\ell^{-1}\bigr]$, where
$f_\ell\in B_{\ell-1}[x_\ell,a_\ell]$ is the denominator in the
fraction that defines~$b_\ell$.

Let $A_\ell=\mathbb C[x_1,\ldots,x_\ell,a_1,\ldots,a_\ell]$ be the
polynomial algebra. One can then easily build by induction a finitely
generated multiplicative set $\mathscr S_\ell\subset A_\ell$ such that
$B_\ell$ is the localization $\mathscr S_\ell^{-1}A_\ell$. While $A_n$
is the coordinate ring of $(\upphi_v)^{-1}(U_v)$, we see that $B_n$ is
the coordinate ring of the open subset $(\upphi_v)^{-1}(U_v\cap U_w)$.
In fact, since the matrix $\begin{pmatrix}P_\ell&Q_\ell\\R_\ell&S_\ell
\end{pmatrix}$ has determinant one, the numerator and the denominator
of $b_\ell$ cannot both vanish at the same time. As a consequence,
$(\upphi_w)^{-1}\circ\upphi_v$ cannot be defined at a point where a
function in $\mathscr S_n$ vanishes.

\subsection{Finding the equations}
\label{ss:FindEqns}
To prove that the MV basis satisfies the equation~\eqref{eq:DefYw},
we need intersection multiplicities in the ambient space
$\BDConv_n^{\bm\lambda}\bigl|_{\Delta_{(1,n-1)}}$. In practice,
we make the base change $\Delta_{(1,n-1)}\to\mathbb C^n$ by letting
$x_2=\cdots=x_n$ in the definition of the charts and by agreeing that
\textbf{from now on, $U_v$ actually means $U_v\bigl|_{\Delta_{(1,n-1)}}$.}
Then, in view of the invariance of the whole system under translation
along the small diagonal $\Delta$, all our equations will only involve
the difference $x=x_1-x_2$.

We will consider words $v$ and $w$ in $\mathscr C_n$ such that
$(v(1),w(1))=(+,-)$ and $\wt(v)=\wt(w)$. The planar paths that
represent $v$ and $w$ have then the same endpoints. We write
$w$ as a concatenation $-w'$ where $w'\in\mathscr C_{n-1}$.
Proposition~\ref{pr:TransMat} asserts that the basis element $y_v$
occurs in the expansion of $x_-\otimes y_{w'}$ on the MV basis of
$V(\varpi)^{\otimes n}$ only if
$\mathcal Y(\mathbf Z_v)\subset\mathcal X(\mathbf Z_w,(1,n-1))$,
and when this condition is fulfilled, its coefficient is the multiplicity
of $\mathcal Y(\mathbf Z_v)$ in the intersection product
$\mathcal X(\mathbf Z_w,(1,n-1))\cdot\,\BDConv_n^{\bm\lambda}
\bigl|_{\Delta}$.

The next sections are devoted to the determination of these inclusions and
intersection multiplicities. The actual calculations require the ideals in
$A_n$ of the subvarieties $(\upphi_v)^{-1}(\mathcal Y(\mathbf Z_v))$
and $(\upphi_v)^{-1}(U_v\cap\mathcal X(\mathbf Z_w,(1,n-1)))$ of
$(\upphi_v)^{-1}(U_v)$: the first one, denoted by $\mathfrak p$,
is generated by $x$ and the elements $a_\ell$ for $\ell\in P(v)$;
the second one, denoted by $\mathfrak q$, is less easily determined.

Taking into account our notational convention regarding the base change
$\Delta_{(1,n-1)}\to\mathbb C^n$, we observe that
$U_v\cap\mathcal X(\mathbf Z_w,(1,n-1))$ is the closure in $U_v$ of
$U_v\cap\dot{\mathcal X}(\mathbf Z_w)$. Now let $\mathring{\mathfrak q}_n$
be the ideal in $B_n$ of the closed subset
$(\upphi_v)^{-1}\bigl(U_v\cap\dot{\mathcal X}(\mathbf Z_w)\bigr)$
of $(\upphi_v)^{-1}(U_v\cap U_w)$. Then $\mathring{\mathfrak q}_n$ is
generated by the elements $b_\ell$ for $\ell\in P(w)$ and $\mathfrak q$
is the preimage of $\mathring{\mathfrak q}_n$ under the canonical map
$A_n\to B_n$. In other words, $\mathfrak q$ is the saturation with
respect to $\mathscr S_n$ of the ideal of $A_n$ generated by the numerators
of the elements $b_\ell$ for $\ell\in P(w)$. Though algorithmically doable
in any concrete example, finding the saturation is a demanding calculation,
which we will bypass by replacing $\mathfrak q$ by an approximation
$\widetilde{\mathfrak q}_n$.

\subsection{Inclusion and multiplicity, I}
\label{ss:IncMulI}
This section is devoted to the situation where the paths representing $v$
and $w$ stay parallel to each other at distance~two; specifically, we
assume that $v(\ell)=w(\ell)$ for each $\ell\in\{2,\ldots,n-1\}$ and
$(v(n),w(n))=(-,+)$.

\begin{proposition}
\label{pr:IncMulI}
Under these assumptions:

\vspace{-12pt}
\begin{enumerate}
\item
\label{it:PrIMIa}
The inclusion
$\mathcal Y(\mathbf Z_v)\subset\mathcal X(\mathbf Z_w,(1,n-1))$
holds if and only if the last latter of $w'$ is significant.
\item
\label{it:PrIMIb}
If the condition in \ref{it:PrIMIa} is fulfilled, then the multiplicity
of $\mathcal Y(\mathbf Z_v)$ in the intersection product
$\mathcal X(\mathbf Z_w,(1,n-1))\cdot\,\BDConv_n^{\bm\lambda}
\bigl|_{\Delta}$ is equal to one.
\end{enumerate}
\end{proposition}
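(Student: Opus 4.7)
The plan is to use the recursion of sect.~\ref{ss:TransMaps} to compute the $b_\ell$ for $\ell \in P(w)$ as rational functions on $U_v \cap U_w$, then apply the saturation procedure of sect.~\ref{ss:FindEqns} to the numerators. Part~\ref{it:PrIMIa} reduces to deciding whether $\mathfrak q \subset \mathfrak p$ or $\mathfrak q = A_n$ (the latter meaning $\mathcal X(\mathbf Z_w,(1,n-1)) \cap U_v = \varnothing$, so that no inclusion can hold); part~\ref{it:PrIMIb} will then follow by computing the order of vanishing of $x$ in the local ring $(A_n/\mathfrak q)_{\overline{\mathfrak p}}$.

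First I would handle $\ell = 1$: the case $(v(1),w(1)) = (+,-)$ yields $b_1 = 1/a_1$, the matrix $M_1 = \begin{pmatrix} z-x_1 & a_1 \\ -1/a_1 & 0 \end{pmatrix}$, and inverts $a_1$ in $B_n$. I would then track the recursion inductively through the middle positions $\ell \in \{2,\ldots,n-1\}$, where $v(\ell) = w(\ell)$. The inductive claim is that the entries of $M_\ell$ are ratios of explicit polynomials whose denominators (the successively introduced $f_\ell$'s) all lie in $\mathscr S_n$, and that for each middle $\ell \in P(w)$ the numerator of $b_\ell$ factorizes, up to a unit of $B_n$, as $a_\ell - \gamma_\ell$ with $\gamma_\ell \in \mathfrak p$. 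Consequently the middle equations contribute to $\mathfrak q$ only relations compatible with $\mathfrak p$.

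The decisive step is $\ell = n$, where $(v(n),w(n)) = (-,+)$. The numerator $(P_{n-1} + a_n Q_{n-1})(x_n)$ of $b_n$, evaluated at $x_n = x_2$, reduces up to units of $B_n$ to a single polynomial $h$ whose shape records the combinatorics of $w'$. The dichotomy to establish is this: if the last letter of $w'$ is not significant, then $h$ itself belongs to $\mathscr S_n$ (it agrees, up to a unit, with the $f_\ell$ that was introduced by the $-$ of $w'$ that would pair with the final $+$); hence $b_n$ is a unit in $B_n$, $\mathring{\mathfrak q}_n = B_n$, and no inclusion holds. If the last letter of $w'$ is significant, then $h \notin \mathscr S_n$ and a direct inspection shows that $h$ is congruent to $-x$ modulo $\mathfrak p^2$. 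The latter case yields both $h \in \mathfrak p$ (so that $\mathfrak q \subset \mathfrak p$, proving part~\ref{it:PrIMIa}) and the fact that the residue of $x$ generates the maximal ideal of $(A_n/\mathfrak q)_{\overline{\mathfrak p}}$ up to a unit, giving part~\ref{it:PrIMIb}.

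The main difficulty lies in making the inductive description of $M_\ell$ precise enough to produce $h$ explicitly in terms of the semistable decomposition of $w'$, and in showing that $h \in \mathscr S_n$ occurs \emph{exactly} in the non-significant case. Organizing the induction around the semistable subwords of $w'$ and identifying, for each significant letter of $w'$, the corresponding factor in $\mathscr S_n$ seems the natural approach.
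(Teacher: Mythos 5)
Your outline correctly identifies the shape of the computation (track the recursion of sect.~\ref{ss:TransMaps}, compute the numerators, analyze the decisive equation at $\ell=n$), and the ``not significant $\Rightarrow$ $\mathring{\mathfrak q}_n=B_n$'' half of part~\ref{it:PrIMIa} is indeed handled exactly as you say. But the argument for the converse inclusion contains a genuine gap.

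You pass from ``the numerators of the $b_\ell$ (i.e.\ the generators of $\widetilde{\mathfrak q}_n$) lie in $\mathfrak p$'' to ``$\mathfrak q\subset\mathfrak p$.'' This step would be automatic if $\mathfrak p$ were disjoint from the multiplicative set $\mathscr S_n$ used in the saturation, since $\mathfrak p$ is prime. But $\mathscr S_n\cap\mathfrak p\neq\varnothing$: already $f_1=a_1$ lies in $\mathfrak p$, because $v(1)=+$, so $1\in P(v)$. Thus an element $g$ of the saturation $\mathfrak q$ satisfies $fg\in\widetilde{\mathfrak q}_n\subset\mathfrak p$ for some $f\in\mathscr S_n$, and primality of $\mathfrak p$ only lets you conclude $g\in\mathfrak p$ when $f\notin\mathfrak p$ --- which can fail. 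In other words, $\widetilde{\mathfrak q}_n\subset\mathfrak p$ is strictly weaker than $\mathfrak q\subset\mathfrak p$, and the paper explicitly flags this (``it would be rather easy to prove the inclusion $\widetilde{\mathfrak q}_n\subset\mathfrak p$, but this would not be quite enough''). Your ``middle equations contribute only relations compatible with $\mathfrak p$'' suffers from the same issue: after you invert $f_1,\ldots,f_{\ell-1}$, some of which lie in $\mathfrak p$, the resulting ideal in $A_n$ may be strictly larger than the one generated by the visible numerators.

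The paper's workaround is what you are missing. Instead of reasoning algebraically about the saturation, the authors eliminate the variables $a_\ell$, $\ell\in P(v)\cap S(v)\setminus\{L\}$, to produce a single polynomial $\widetilde g$ in $\widetilde{\mathfrak q}_n$ depending only on $x,a_1$ and the $a_j$ with $v(j)=-$ (Lemma~\ref{le:Elimin}), use the Newton polygon of $\widetilde g$ to solve for $a_1$ as an analytic germ, and then bootstrap through the recursive formulas to build a germ of smooth curve through a generic point of $\mathcal Y(\mathbf Z_v)$ (Lemma~\ref{le:GermCurv}). The decisive verification is item~\ref{it:LeGCc} there: none of the $f_\ell$ vanish along this curve for $\xi\neq0$, i.e.\ the curve lies in $U_w$. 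Its prime ideal then avoids $\mathscr S_n$, contains $\widetilde{\mathfrak q}_n$, hence contains the saturation $\mathfrak q$, which gives $\mathcal Y(\mathbf Z_v)\subset\mathcal X(\mathbf Z_w,(1,n-1))$. Only after this is in place is your intended Nakayama-type argument (Lemma~\ref{le:PrepNaka}) applied to get multiplicity one; that part of your proposal is fine given the preceding inclusion, but cannot be carried out without it.
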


The proof of Proposition~\ref{pr:IncMulI} fills the remainder of this
section.

Let us denote by $S(v)$ the set of all positions $\ell\in\{1,\ldots,n\}$
such that the letter $v(\ell)$ is significant in~$v$.

In agreement with the convention set forth in sect.~\ref{ss:FindEqns},
we define $A_\ell=\mathbb C[x_2][x,a_1,\ldots,a_\ell]$ for each
$\strut\ell\in\{1,\ldots,n\}$, where $x=x_1-x_2$. We rewrite the
indeterminate $z$ as $\tilde z+x_2$. We~set $\widetilde P_1=\tilde z-x$
and $\widetilde Q_1=a_1$. For $\strut\ell\in\{2,\ldots,n-1\}$, we define
by induction two polynomials~$\widetilde P_\ell$, $\widetilde Q_\ell$
in $A_\ell[\tilde z]$ as follows:
\begin{itemize}
\item
If $v(\ell)=w(\ell)=+$ and $\ell\in S(v)$, then
$$\widetilde P_\ell=\widetilde P_{\ell-1}\quad\;\text{and}\quad\;
\widetilde Q_\ell=\frac{a_\ell\widetilde P_{\ell-1}+\widetilde Q_{\ell-1}
-\bigl(a_\ell\widetilde P_{\ell-1}+\widetilde Q_{\ell-1}\bigr)
\bigl(0\bigr)}{\tilde z}.$$
\item
If $v(\ell)=w(\ell)=+$ and $\ell\notin S(v)$, then
$\widetilde P_\ell=\widetilde P_{\ell-1}$ and
$\widetilde Q_\ell=\bigl(\widetilde Q_{\ell-1}-
\widetilde Q_{\ell-1}\bigl(0\bigr)\bigr)/{\tilde z}$.
\item
If $v(\ell)=w(\ell)=-$, then
$\widetilde P_\ell=\widetilde P_{\ell-1}+a_\ell\widetilde Q_{\ell-1}$ and
$\widetilde Q_\ell=\tilde z\,\widetilde Q_{\ell-1}$.
\end{itemize}
Moreover, in the case where $v(\ell)=w(\ell)=+$, set
$$\widetilde c_\ell=\begin{cases}
\bigl(a_\ell\widetilde P_{\ell-1}+\widetilde Q_{\ell-1}\bigr)
\bigl(0\bigr)&\text{if $\ell\in S(v)$,}\\[4pt]
\;a_\ell&\text{otherwise,}
\end{cases}$$
and set
$$\widetilde c_n=\bigl(\widetilde P_{n-1}+a_n\widetilde Q_{n-1}\bigr)
\bigl(0\bigr).$$

\begin{other}{Remark}
\label{rk:DependVar}
The polynomials $\widetilde P_\ell$ and $\widetilde Q_\ell$ do not depend
on the variables $a_j$ with $j\in P(v)\setminus S(v)$. The elements
$\widetilde c_\ell$ for $\ell\in\{2,\ldots,n-1\}\cap P(v)\cap S(v)$ and
$\widetilde c_n$ enjoy the same property.
\end{other}

\vspace*{-4pt}
For $\ell\in\{1,\ldots,n\}$:

\vspace*{-10pt}
\begin{itemize}
\item
let $\mathring{\mathfrak q}_\ell$ be the ideal of
$B_\ell$ generated by $\{b_j\mid j\in P(w),\;j\leq\ell\}$;
\item
let $\widetilde{\mathfrak q}_\ell$ be the ideal of $A_\ell$ generated
by $\{\widetilde c_j\mid j\in P(w),\;j\leq\ell\}$;
\item
let $d_\ell$ be the weight of the word $v(1)v(2)\cdots v(\ell)$ and
set $D_\ell=\max(d_1,d_2,\ldots,d_\ell)$.
\end{itemize}

As noticed before, a $+$ letter at position $\ell$ in $v$ is
significant if and only if $\ell$ marks the first time that the path
representing $v$ reaches a new height; agreeing that $D_0=0$, this
translates to
$$\ell\in P(v)\cap S(v)\;\Longleftrightarrow\;d_\ell>D_{\ell-1}.$$
For the record, we also note that the last letter of $w'$ is significant
if and only if $d_{n-1}=D_{n-1}$.

\begin{lemma}
\label{le:Induc}
For $\ell\in\{1,\ldots,n-1\}$, we have

\vspace{-12pt}
\renewcommand\theenumi{(\roman{enumi})${}_\ell$}
\begin{enumerate}
\item
$\mathscr S_\ell^{-1}\widetilde{\mathfrak q}_\ell=\mathring{\mathfrak q}_\ell$,
\item
$\widetilde P_\ell(\tilde z)\equiv P_\ell(z)
\pmod{\mathring{\mathfrak q}_\ell[z]}\;$ and
$\;\widetilde Q_\ell(\tilde z)\equiv Q_\ell(z)
\pmod{\mathring{\mathfrak q}_\ell[z]}$,
\item
$\tilde z^{D_\ell-d_\ell}$ divides $\widetilde Q_\ell$.
\end{enumerate}
\renewcommand\theenumi{(\alph{enumi})}
\end{lemma}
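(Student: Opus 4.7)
The plan is to establish (i)$_\ell$, (ii)$_\ell$ and (iii)$_\ell$ simultaneously by induction on $\ell$, dissecting the inductive step according to the value of the pair $(v(\ell),w(\ell))$ and, in the $(+,+)$ case, according to whether $\ell$ lies in $S(v)$. The base case $\ell=1$ is immediate: since $(v(1),w(1))=(+,-)$ by the standing assumption of sect.~\ref{ss:FindEqns}, the position~$1$ does not belong to $P(w)$, so $\mathring{\mathfrak q}_1=\widetilde{\mathfrak q}_1=(0)$; the $(+,-)$ rules from sect.~\ref{ss:TransMaps} give $P_1=z-x_1$ and $Q_1=a_1$, which match $\widetilde P_1$ and $\widetilde Q_1$ after the substitution $z=\tilde z+x_2$; and $D_1-d_1=0$ makes (iii)$_1$ automatic.

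For the inductive step at $\ell\geq 2$, the case $(v(\ell),w(\ell))=(-,-)$ is transparent: both ideals are unchanged, so (i) is preserved; the transition rules for $P_\ell,Q_\ell$ mirror those for $\widetilde P_\ell,\widetilde Q_\ell$ under the identification $z-x_\ell=\tilde z$ (valid for $\ell\geq 2$ after the base change to the small diagonal), yielding (ii); and the extra factor of $\tilde z$ in $\widetilde Q_\ell=\tilde z\,\widetilde Q_{\ell-1}$ matches the increment $D_\ell-d_\ell=(D_{\ell-1}-d_{\ell-1})+1$, giving (iii).

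The main case is $(v(\ell),w(\ell))=(+,+)$ with $\ell\in S(v)$. Here $\ell\in P(w)$, and both $b_\ell$ and $\widetilde c_\ell$ are appended to their respective ideals. Using (ii)$_{\ell-1}$ and the fact that evaluation at $z=x_\ell=x_2$ corresponds to the substitution $\tilde z=0$, the numerator $(a_\ell P_{\ell-1}+Q_{\ell-1})(x_\ell)$ reduces modulo $\mathring{\mathfrak q}_{\ell-1}$ to $(a_\ell\widetilde P_{\ell-1}+\widetilde Q_{\ell-1})(0)=\widetilde c_\ell$. The denominator $(a_\ell R_{\ell-1}+S_{\ell-1})(x_\ell)$ is, by the very construction at the end of sect.~\ref{ss:TransMaps}, an element of $\mathscr S_\ell$, so in the localized ring $B_\ell$ the generators $b_\ell$ and $\widetilde c_\ell$ agree up to a unit, which gives (i)$_\ell$. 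Statement (ii)$_\ell$ then follows by substituting into the transition formulas for $P_\ell,Q_\ell$ and reducing modulo $\mathring{\mathfrak q}_\ell[z]$, while (iii)$_\ell$ is trivial since $d_\ell=D_\ell$.

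The remaining case $(v(\ell),w(\ell))=(+,+)$ with $\ell\notin S(v)$ is where I expect the main obstacle to lie. Here $\widetilde c_\ell$ is set to $a_\ell$ rather than to the reduced numerator, so the task is to show that $b_\ell$ and $a_\ell$ generate the same localized ideal. The crucial observation is that $v(\ell)=+$ together with $\ell\notin S(v)$ forces $d_\ell\leq D_{\ell-1}$, whence $D_{\ell-1}-d_{\ell-1}\geq 1$, and (iii)$_{\ell-1}$ yields $\widetilde Q_{\ell-1}(0)=0$, so the reduced numerator collapses to $a_\ell\,\widetilde P_{\ell-1}(0)$. To finish (i)$_\ell$ one must certify that $\widetilde P_{\ell-1}(0)$ becomes a unit in $B_\ell$; this is the delicate point, and I expect to handle it by carrying a parallel inductive track on $R_\ell,S_\ell$, exploiting the determinant-one relation on $\begin{pmatrix}P_{\ell-1}&Q_{\ell-1}\\R_{\ell-1}&S_{\ell-1}\end{pmatrix}$ to force the relevant evaluation into $\mathscr S_\ell$. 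Granted this, (ii)$_\ell$ follows by expanding the transition rule for $Q_\ell$ modulo $\mathring{\mathfrak q}_\ell[z]$, using once more that $\tilde z$ divides $\widetilde Q_{\ell-1}$; and (iii)$_\ell$ is immediate from $D_\ell-d_\ell=(D_{\ell-1}-d_{\ell-1})-1$ combined with the defining formula $\widetilde Q_\ell=\widetilde Q_{\ell-1}/\tilde z$.
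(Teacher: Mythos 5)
Your outline matches the paper's proof quite closely: both proceed by induction on $\ell$, split according to $(v(\ell),w(\ell))$ and whether $\ell\in S(v)$, dispose of the base case and the $(-,-)$ and $(+,+)$-significant cases directly, and single out the $(+,+)$-non-significant case as the delicate one where the unit-determinant property of $\begin{pmatrix}P_{\ell-1}&Q_{\ell-1}\\R_{\ell-1}&S_{\ell-1}\end{pmatrix}$ must be exploited. Up to that point the argument is sound and essentially identical to the paper's.

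In that last case, however, your target is overstated and the proposed remedy is pointed in the wrong direction. You do not need $\widetilde P_{\ell-1}(0)$ (equivalently, by (ii)$_{\ell-1}$, the evaluation $P_{\ell-1}(x_2)$) to be a unit in $B_\ell$; that will generally be false, since $P_{\ell-1}$ may well vanish on parts of the open set that $B_\ell$ parametrizes. Nor should one try to enlarge $\mathscr S_\ell$ to "force the evaluation in": the multiplicative set $\mathscr S_\ell$ is fixed by the construction in sect.~\ref{ss:TransMaps}, and enlarging it would change $B_\ell$ and hence the ideal $\mathfrak q$ one is computing. What is both true and sufficient is invertibility \emph{modulo $\mathring{\mathfrak q}_{\ell-1}$}: from (iii)$_{\ell-1}$ one gets $\widetilde Q_{\ell-1}(0)=0$, hence by (ii)$_{\ell-1}$ the element $Q_{\ell-1}(x_2)$ lies in $\mathring{\mathfrak q}_{\ell-1}$, and the identity $P_{\ell-1}(x_2)S_{\ell-1}(x_2)-Q_{\ell-1}(x_2)R_{\ell-1}(x_2)=1$ in $B_{\ell-1}$ (available at once, because the matrix has determinant one by the construction of sect.~\ref{ss:TransMaps}; no separate inductive track on $R_\ell,S_\ell$ is needed) then shows $P_{\ell-1}(x_2)$ is invertible in $B_{\ell-1}/\mathring{\mathfrak q}_{\ell-1}$. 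That is exactly enough to conclude that $b_\ell$ and $a_\ell=\widetilde c_\ell$ generate the same ideal in $B_\ell/\mathring{\mathfrak q}_{\ell-1}B_\ell$, giving (i)$_\ell$. With that correction, your plan for (ii)$_\ell$ and (iii)$_\ell$ in this case goes through as described.
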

\begin{proof}
We proceed by induction on $\ell$. The statements are banal for $\ell=1$.
Suppose that $2\leq\ell\leq n-1$ and that statements
(i)${}_{\ell-1}$, (ii)${}_{\ell-1}$ and (iii)${}_{\ell-1}$ hold.

Suppose first that $(v(\ell),w(\ell))=(+,+)$. Then by construction
\begin{gather}
\label{eq:Induc1}
b_\ell=\bigl(a_\ell P_{\ell-1}+Q_{\ell-1}\bigr)\bigl(x_2\bigr)\times
f_\ell^{-1},\\[4pt]
\label{eq:Induc2}
P_\ell=P_{\ell-1}-b_\ell R_{\ell-1},\qquad Q_\ell=\frac{a_\ell
P_{\ell-1}+Q_{\ell-1}-b_\ell S_\ell}{z-x_2}.
\end{gather}

If $\ell\notin S(v)$, then $d_{\ell-1}+1=d_\ell\leq D_{\ell-1}$,
and we see by (iii)${}_{\ell-1}$ that $\widetilde Q_{\ell-1}(0)=0$.
Using (ii)${}_{\ell-1}$, we deduce that
$Q_{\ell-1}(x_2)\in\mathring{\mathfrak q}_{\ell-1}$.
On the other hand, the matrix
$\begin{pmatrix}P_{\ell-1}(x_2)&Q_{\ell-1}(x_2)\\
R_{\ell-1}(x_2)&S_{\ell-1}(x_2)\end{pmatrix}$
with coefficients in $B_{\ell-1}$ has determinant one. After reduction
modulo $\strut\mathring{\mathfrak q}_{\ell-1}$, the coefficient in the top
right corner becomes zero; it follows that $P_{\ell-1}(x_2)$ is invertible
in the quotient ring $B_{\ell-1}/\mathring{\mathfrak q}_{\ell-1}$.
Reducing~\eqref{eq:Induc1} modulo $\mathring{\mathfrak q}_{\ell-1}B_\ell$
and noting that here $\widetilde c_\ell=a_\ell$, we deduce that $b_\ell$
and $\widetilde c_\ell$ generate the same ideal in
$B_\ell/\mathring{\mathfrak q}_{\ell-1}B_\ell$.
This piece of information allows to deduce (i)${}_\ell$
from~(i)${}_{\ell-1}$. From~\eqref{eq:Induc2} and the fact that
$a_\ell\in\mathring{\mathfrak q}_\ell$, we get
$$P_\ell\equiv P_{\ell-1}\pmod{\mathring{\mathfrak q}_\ell[z]},\qquad
Q_\ell\equiv\frac{Q_{\ell-1}-Q_{\ell-1}(x_2)}{z-x_2}
\pmod{\mathring{\mathfrak q}_\ell[z]}.$$
Then (ii)${}_\ell$ and (iii)${}_\ell$ follow from
(ii)${}_{\ell-1}$ and (iii)${}_{\ell-1}$ and from the
definition of $\widetilde P_\ell$ and $\widetilde Q_\ell$.

If $\ell\in S(v)$, then \eqref{eq:Induc1} and (ii)${}_{\ell-1}$
lead to $b_\ell\equiv\widetilde c_\ell/f_\ell$ modulo
$\mathring{\mathfrak q}_{\ell-1}B_\ell$. Again, $b_\ell$ and
$\widetilde c_\ell$ generate the same ideal in
$B_\ell/\mathring{\mathfrak q}_{\ell-1}B_\ell$, so we can deduce
(i)${}_\ell$ from~(i)${}_{\ell-1}$. Then (ii)${}_\ell$ follows from
(ii)${}_{\ell-1}$ and~\eqref{eq:Induc2}. Also, (iii)${}_{\ell-1}$
holds trivially since $D_\ell=d_\ell$.

It remains to tackle the case $(v(\ell),w(\ell))=(-,-)$. Here
(i)${}_\ell$, (ii)${}_\ell$ and (iii)${}_\ell$ can be deduced from
(i)${}_{\ell-1}$, (ii)${}_{\ell-1}$ and (iii)${}_{\ell-1}$ without ado.
\end{proof}

\begin{lemma}
With the notation above,
$$\mathscr S_n^{-1}\widetilde{\mathfrak q}_n=\mathring{\mathfrak q}_n
\quad\;\text{and}\quad\;
\mathfrak q=\bigl\{g\in A_n\bigm|\exists f\in\mathscr S_n,\;fg\in
\widetilde{\mathfrak q}_n\bigr\}.$$
\end{lemma}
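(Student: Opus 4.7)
The plan is to extend Lemma~\ref{le:Induc} by one index. That lemma gives statement (i)${}_{n-1}$, namely $\mathscr S_{n-1}^{-1}\widetilde{\mathfrak q}_{n-1}=\mathring{\mathfrak q}_{n-1}$, for the first $n-1$ positions, which are all of type $(+,+)$, $(-,+)$ with $\ell\notin S(v)$, or $(-,-)$; at position $n$ we are in the new case $(v(n),w(n))=(-,+)$, which the induction of Lemma~\ref{le:Induc} deliberately avoided. Since $n\in P(w)$, we have $\widetilde{\mathfrak q}_n=\widetilde{\mathfrak q}_{n-1}+(\widetilde c_n)$ and $\mathring{\mathfrak q}_n=\mathring{\mathfrak q}_{n-1}B_n+(b_n)$, so in view of (i)${}_{n-1}$ it suffices to show that $b_n$ and $\widetilde c_n$ generate the same ideal in $B_n/\mathring{\mathfrak q}_{n-1}B_n$.

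For this, I would use the recursion of sect.~\ref{ss:TransMaps} in the $(-,+)$ case, which gives
$$b_n=\frac{(P_{n-1}+a_nQ_{n-1})(x_2)}{(R_{n-1}+a_nS_{n-1})(x_2)}.$$
The denominator is invertible in $B_n$ by construction of $B_n$, hence remains invertible in any nonzero quotient (and the result is trivial if $\mathring{\mathfrak q}_{n-1}B_n=B_n$). Applying Lemma~\ref{le:Induc}(ii)${}_{n-1}$ to reduce $P_{n-1}(x_2)$ and $Q_{n-1}(x_2)$ to $\widetilde P_{n-1}(0)$ and $\widetilde Q_{n-1}(0)$ modulo $\mathring{\mathfrak q}_{n-1}$, the numerator becomes $\widetilde P_{n-1}(0)+a_n\widetilde Q_{n-1}(0)=\widetilde c_n$. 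Thus $b_n$ equals $\widetilde c_n$ times a unit modulo $\mathring{\mathfrak q}_{n-1}B_n$, which yields the first equality.

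For the second equality, I would simply unravel the definition of $\mathfrak q$ given in sect.~\ref{ss:FindEqns}: it is the preimage of $\mathring{\mathfrak q}_n$ under the canonical map $A_n\to B_n$. By the first equality, this is the preimage of $\mathscr S_n^{-1}\widetilde{\mathfrak q}_n$, and the standard correspondence between localized ideals and saturations identifies this preimage with $\bigl\{g\in A_n\bigm|\exists f\in\mathscr S_n,\;fg\in\widetilde{\mathfrak q}_n\bigr\}$.

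The main obstacle is really confined to the one-step extension handled above; the bookkeeping showing that the denominator of $b_n$ stays a unit after reduction modulo $\mathring{\mathfrak q}_{n-1}B_n$ (including the degenerate case where the quotient collapses) is the only genuinely new point, and everything else is formal from Lemma~\ref{le:Induc} and the definitions.
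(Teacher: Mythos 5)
Your proof is correct and takes essentially the same route as the paper's: both reduce to showing that $b_n$ and $\widetilde c_n$ generate the same ideal in $B_n/\mathring{\mathfrak q}_{n-1}B_n$, obtain $b_n\equiv\widetilde c_n/f_n$ there by applying assertion~(ii)${}_{n-1}$ of Lemma~\ref{le:Induc} to evaluate $P_{n-1}$, $Q_{n-1}$ at $x_2$, invoke the invertibility of $f_n$, combine with assertion~(i)${}_{n-1}$, and finish with the standard description of the preimage of a localized ideal as a saturation. Your treatment of the degenerate quotient is a harmless extra precaution, since a unit in $B_n$ maps to a unit in any quotient regardless.
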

\begin{proof}
From $(v(n),w(n))=(-,+)$, we deduce
$$b_n=\bigl(P_{n-1}+a_nQ_{n-1}\bigr)\bigl(x_2\bigr)\times f_n^{-1}.$$
From the assertion (ii)${}_{n-1}$ in Lemma~\ref{le:Induc},
we deduce that $b_n\equiv\widetilde c_n/f_n$ modulo
$\mathring{\mathfrak q}_{n-1}B_n$. Thus, $b_n$ and
$\widetilde c_n$ generate the same ideal in
$B_n/\mathring{\mathfrak q}_{n-1}B_n$, and from the
assertion~(i)${}_{n-1}$ in Lemma~\ref{le:Induc}, we conclude that
$\mathscr S_n^{-1}\widetilde{\mathfrak q}_n=\mathring{\mathfrak q}_n$.
The second announced equality then follows from the definition of
$\mathfrak q$ as the preimage of $\mathring{\mathfrak q}_n$ under the
canonical map $A_n\to B_n$, with $B_n=\mathscr S_n^{-1}A_n$.
\end{proof}

\begin{lemma}
\label{le:Exclus}
If the last letter of $w'$ is not significant,
then $\mathring{\mathfrak q}_n=B_n$.
\end{lemma}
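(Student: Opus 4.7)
The plan is to show that the single new generator $b_n$ of $\mathring{\mathfrak q}_n$ (the one coming from the last position, where $(v(n),w(n))=(-,+)$) is actually a unit in $B_n$ modulo $\mathring{\mathfrak q}_{n-1}B_n$; this forces $1\in\mathring{\mathfrak q}_n$ and hence $\mathring{\mathfrak q}_n=B_n$.

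First I would translate the non-significance hypothesis into a divisibility statement on $\widetilde Q_{n-1}$. As the excerpt records, the last letter of $w'$ is significant if and only if $d_{n-1}=D_{n-1}$, so the hypothesis gives $d_{n-1}<D_{n-1}$. By assertion (iii)${}_{n-1}$ of Lemma~\ref{le:Induc}, $\tilde z^{D_{n-1}-d_{n-1}}$ divides $\widetilde Q_{n-1}$, and in particular $\widetilde Q_{n-1}(0)=0$. Coupling this with assertion (ii)${}_{n-1}$, which says $\widetilde Q_{n-1}(\tilde z)\equiv Q_{n-1}(z)\pmod{\mathring{\mathfrak q}_{n-1}[z]}$, and specialising at $\tilde z=0$ (i.e.\ $z=x_2$), I get
$$Q_{n-1}(x_2)\equiv\widetilde Q_{n-1}(0)=0\pmod{\mathring{\mathfrak q}_{n-1}}.$$

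Next, I would invoke the determinant-one relation $P_{n-1}S_{n-1}-Q_{n-1}R_{n-1}=1$, valid in $K_{n-1}[z]$ and hence in $B_{n-1}[z]$. Evaluating at $z=x_2$ and reducing modulo $\mathring{\mathfrak q}_{n-1}$, the term $Q_{n-1}(x_2)R_{n-1}(x_2)$ vanishes by the previous step, giving $P_{n-1}(x_2)\,S_{n-1}(x_2)\equiv 1\pmod{\mathring{\mathfrak q}_{n-1}}$. Thus $P_{n-1}(x_2)$ is invertible in $B_n/\mathring{\mathfrak q}_{n-1}B_n$.

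Finally, using the explicit recursion for $(v(n),w(n))=(-,+)$ from sect.~\ref{ss:TransMaps},
$$b_n=\frac{\bigl(P_{n-1}+a_nQ_{n-1}\bigr)(x_2)}{f_n},\qquad f_n=\bigl(R_{n-1}+a_nS_{n-1}\bigr)(x_2),$$
the denominator $f_n$ is inverted in $B_n$ by construction, and the numerator reduces modulo $\mathring{\mathfrak q}_{n-1}B_n$ to $P_{n-1}(x_2)$, which I have just shown to be a unit. Hence $b_n$ is a unit in $B_n/\mathring{\mathfrak q}_{n-1}B_n$, so the ideal $\mathring{\mathfrak q}_n=\mathring{\mathfrak q}_{n-1}+(b_n)B_n$ contains $1$, and $\mathring{\mathfrak q}_n=B_n$, as desired.

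The only non-routine point is step one — recognising that the combinatorial condition "$w(n)$ is not significant in $w'$" is exactly the numerical condition $d_{n-1}<D_{n-1}$ that triggers the divisibility bound of Lemma~\ref{le:Induc}(iii). Everything else (the $\SL_2$-style determinant trick and the specialisation of the recursion for $b_n$) is then essentially forced.
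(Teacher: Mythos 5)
Your proof is correct and follows exactly the same route as the paper: translate non-significance into $d_{n-1}<D_{n-1}$, use Lemma~\ref{le:Induc}~(iii)${}_{n-1}$ to get $\widetilde Q_{n-1}(0)=0$, transfer via (ii)${}_{n-1}$ to $Q_{n-1}(x_2)\in\mathring{\mathfrak q}_{n-1}$, apply the determinant-one relation to show $P_{n-1}(x_2)$ is a unit modulo $\mathring{\mathfrak q}_{n-1}$, and conclude from the explicit formula for $b_n$ in the $(-,+)$ case. No gaps, and no meaningful deviation from the paper's own argument.
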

\begin{proof}
Assume that the last letter of $w'$ is not significant. Then
$D_{n-1}-d_{n-1}\geq1$, and by assertion (iii)${}_{n-1}$ in
Lemma~\ref{le:Induc}, we get $\widetilde Q_{n-1}(0)=0$.
Using assertion (ii)${}_{n-1}$ in that lemma, we deduce that
$Q_{n-1}(x_2)\in\mathring{\mathfrak q}_{n-1}$. Since the
matrix $\begin{pmatrix}P_{n-1}(x_2)&Q_{n-1}(x_2)\\
R_{n-1}(x_2)&S_{n-1}(x_2)\end{pmatrix}$ has determinant~$1$,
we see that $P_{n-1}(x_2)$ is invertible in the ring
$B_{n-1}/\mathring{\mathfrak q}_{n-1}$. Then
$b_n=\bigl(P_{n-1}+a_nQ_{n-1}\bigr)\bigl(x_2\bigr)\times f_n^{-1}$
is invertible in $B_n/\mathring{\mathfrak q}_{n-1}B_n$, and we conclude
that $\mathring{\mathfrak q}_n=B_n$.
\end{proof}

Lemma~\ref{le:Exclus} asserts that if the last letter of $w'$ is not
significant, then $U_v\cap\dot{\mathcal X}(\mathbf Z_w)=\varnothing$,
and thus $U_v\cap\mathcal X(\mathbf Z_w,(1,n-1))=\varnothing$.
Since $U_v$ contains $\mathcal Y(\mathbf Z_v)$, this proves half of
Proposition~\ref{pr:IncMulI}~\ref{it:PrIMIa}.

For the rest of this section, we assume that the last letter of $w'$
is significant. We want to show that $\mathcal Y(\mathbf Z_v)$ is
contained in $\mathcal X(\mathbf Z_w,(1,n-1))$. It would be rather easy
to prove the inclusion $\widetilde{\mathfrak q}_n\subset\mathfrak p$,
but this would not be quite enough, since we do not know that
$\widetilde{\mathfrak q}_n=\mathfrak q$. (We believe that this
equality is correct but we are not able to prove it.) Instead we will
look explicitly at the zero set of $\widetilde{\mathfrak q}_n$ in the
neighborhood of $(\upphi_v)^{-1}(\mathcal Y(\mathbf Z_v))$. This zero
set is the algebraic subset of $(\upphi_v)^{-1}(U_v)$ defined by the
equations $\widetilde c_\ell$ for $\ell\in P(w)$.

Our analysis is pedestrian. We observe that there are two kinds of
equations~$\widetilde c_\ell$. When $\ell\in P(v)\setminus S(v)$, the
equation $\widetilde c_\ell$ reduces to the variable $a_\ell$; this
equation and variable can simply be discarded because $a_\ell$ is an
equation for $\mathcal Y(\mathbf Z_v)$ as well. The other equations
involve the other variables.

Set $D=D_n$. The map $\ell\mapsto d_\ell$ is an increasing bijection
from $P(v)\cap S(v)$ onto $\{1,\ldots,D\}$. We define $L$ as the
largest element in $P(v)\cap S(v)$; then $L$ is the smallest element
in $\{\ell\mid d_\ell=D\}$. For $\ell\in\{1,\ldots,n\}$, we denote
by $\ell^-$ the largest element in $\{1,\ldots,\ell\}\cap P(v)\cap S(v)$.
In partic\-ular,~$\ell^-=\ell$ if $\ell\in P(v)\cap S(v)$ and $\ell^-=L$
if $\ell\geq L$; also $d_{\ell^-}=D_\ell$.

Given $\ell\in\{1,\ldots,n\}$, let $\sigma_\ell$ be the sum of the
variables $a_j$ for $j\in\{2,\ldots,\ell\}$ such that $v(j)=-$ and
$d_{j-1}=D$; thus $\sigma_\ell=0$ if $\ell\leq L$.

We define a graduation on $A_n$ by setting $\deg x=1$,
$\deg a_\ell=D+1-d_\ell$ for $\ell\in P(v)\cap S(v)$, and $\deg a_\ell=0$
for the other variables. For $d\geq1$, we denote by $J_d$ the ideal
of $A_n$ spanned by monomials of degree at least $d$.

\begin{lemma}
\label{le:ValTildeP}
Let $\ell\in\{1,\ldots,n-1\}$.

\vspace{-12pt}
\renewcommand\theenumi{(\roman{enumi})${}_\ell$}
\begin{enumerate}
\item
If $\ell\leq L$, then $\widetilde P_\ell(\tilde z)\equiv\tilde z-x
\pmod{J_2[\tilde z]}$; if $\ell\geq L$, then
$\widetilde P_\ell(0)\equiv a_L\sigma_\ell-x\pmod{J_2}$.
\item
$\widetilde Q_\ell(\tilde z)\equiv\tilde z^{D_\ell-d_\ell}\,a_{\ell^-}
\pmod{J_{D+2-d_{\ell^-}}[\tilde z]}$.
\end{enumerate}
\end{lemma}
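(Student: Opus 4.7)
The plan is to prove Lemma~\ref{le:ValTildeP} by induction on $\ell$, establishing (i)${}_\ell$ and (ii)${}_\ell$ simultaneously. The base case $\ell=1$ follows immediately from $\widetilde P_1=\tilde z-x$, $\widetilde Q_1=a_1$, the fact that $v(1)=+$ is necessarily significant (so $1^-=1$ and $D_1=d_1=1$), and the degree $\deg a_1=D$.

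For the induction step I would split into the three cases allowed by the recursion for $\widetilde P_\ell$ and $\widetilde Q_\ell$. In Case~A, where $(v(\ell),w(\ell))=(+,+)$ with $\ell\in S(v)$, one has $\ell\leq L$ and the condition $d_\ell=d_{\ell-1}+1>D_{\ell-1}$ forces $D_{\ell-1}=d_{\ell-1}$; so (i)${}_{\ell-1}$ and (ii)${}_{\ell-1}$ reduce to $\widetilde P_{\ell-1}\equiv\tilde z-x\pmod{J_2[\tilde z]}$ and $\widetilde Q_{\ell-1}\equiv a_{(\ell-1)^-}\pmod{J_{D+3-d_\ell}[\tilde z]}$. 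The constant-term subtraction in the formula for $\widetilde Q_\ell$ is designed exactly to cancel $-a_\ell x$ and $a_{(\ell-1)^-}$, leaving $a_\ell\tilde z$ plus a multiple of $\tilde z$ with coefficients in $J_{D+3-d_\ell}$; dividing by $\tilde z$ gives (ii)${}_\ell$, while (i)${}_\ell$ follows from $\widetilde P_\ell=\widetilde P_{\ell-1}$. In Case~B, where $(v(\ell),w(\ell))=(+,+)$ and $\ell\notin S(v)$, one has $d_{\ell-1}<D_{\ell-1}$, so (ii)${}_{\ell-1}$ yields $\widetilde Q_{\ell-1}(0)\equiv0$ modulo $J_{D+2-d_{(\ell-1)^-}}$; division by $\tilde z$ then lowers the $\tilde z$-exponent by one and gives (ii)${}_\ell$, while (i)${}_\ell$ is again immediate from $\widetilde P_\ell=\widetilde P_{\ell-1}$.

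Case~C, $(v(\ell),w(\ell))=(-,-)$, is the most delicate. Statement (ii)${}_\ell$ is obtained by multiplying (ii)${}_{\ell-1}$ by $\tilde z$. For (i)${}_\ell$ I would distinguish $\ell<L$ (noting $\ell\neq L$, since $L\in P(v)$) from $\ell>L$. When $\ell<L$, the estimate $D_{\ell-1}\leq D-1$ makes $\deg a_{(\ell-1)^-}\geq2$, hence $a_\ell\widetilde Q_{\ell-1}\in J_2[\tilde z]$ and $\widetilde P_\ell\equiv\widetilde P_{\ell-1}\equiv\tilde z-x$. When $\ell>L$, one has $(\ell-1)^-=L$ and $D_{\ell-1}=D$, so $\widetilde Q_{\ell-1}(0)\equiv a_L\pmod{J_2}$ precisely when $d_{\ell-1}=D$ and $\equiv0$ otherwise; this increment $a_\ell a_L$ added to $\widetilde P_{\ell-1}(0)\equiv a_L\sigma_{\ell-1}-x$ is exactly what promotes $\sigma_{\ell-1}$ to $\sigma_\ell$, yielding $\widetilde P_\ell(0)\equiv a_L\sigma_\ell-x\pmod{J_2}$.

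The main obstacle I foresee is the combinatorial bookkeeping around the position $\ell=L$ and the verification that the definition of $\sigma_\ell$ matches precisely the algebraic condition under which $\widetilde Q_{\ell-1}(0)$ contributes an $a_L$ modulo $J_2$ in Case~C; once this matching is made explicit, the algebra in each case reduces to a routine expansion modulo the graded ideals $J_d$.
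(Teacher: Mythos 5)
Your proposal is correct and follows essentially the same inductive scheme as the paper's own proof: a base case at $\ell=1$, then a three-way case split on $(v(\ell),w(\ell))$ and whether $\ell\in S(v)$, with Case C (where $v(\ell)=w(\ell)=-$) further split at the threshold $L$, and the crucial observation that the degree bookkeeping in the graded ideals $J_d$ and the definition of $\sigma_\ell$ precisely absorb the term $a_\ell\widetilde Q_{\ell-1}(0)$. The only detail worth adding explicitly is that when $\ell=L$ both clauses of (i)${}_\ell$ apply simultaneously, and the second reduces to the first because $\sigma_L=0$; your Case A handles this implicitly but it deserves a sentence.
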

\begin{proof}
The proof starts with a banal verification for $\ell=1$ and then
proceeds by induction on~$\ell$. Suppose that $2\leq\ell\leq n-1$
and that statements (i)${}_{\ell-1}$ and (ii)${}_{\ell-1}$ hold.

Assume first that $v(\ell)=w(\ell)=-$. Here (ii)${}_\ell$ is an
immediate consequence of (ii)${}_{\ell-1}$. If $\ell-1<L$, then
$d_{(\ell-1)^-}<D$, so $\deg a_{(\ell-1)^-}\geq2$, and
$\widetilde Q_{\ell-1}\in J_2[\tilde z]$ by statement
(ii)${}_{\ell-1}$. As a result,
$\strut\widetilde P_\ell\equiv\widetilde P_{\ell-1}\pmod{J_2[\tilde z]}$,
so (i)${}_\ell$ directly follows from (i)${}_{\ell-1}$.
If $\ell-1\geq L$, then either $d_{\ell-1}=D$, in which case
$\strut\widetilde Q_{\ell-1}(0)\equiv a_L\pmod{J_2}$ and
$\sigma_\ell=\sigma_{\ell-1}+a_\ell$, or $d_{\ell-1}<D$,
in which case $\strut\widetilde Q_{\ell-1}(0)\equiv0\pmod{J_2}$
and $\sigma_\ell=\sigma_{\ell-1}$. In both cases,
$\widetilde P_\ell(0)-(a_L\sigma_\ell)
\equiv\widetilde P_{\ell-1}(0)-(a_L\sigma_{\ell-1})\pmod{J_2}$,
and again (i)${}_\ell$ follows from (i)${}_{\ell-1}$.

Assume now that $v(\ell)=w(\ell)=+$ and that $\ell\in S(v)$.
Certainly then (i)${}_\ell$ is readily deduced from (i)${}_{\ell-1}$.
Further, we remark that $d_{(\ell-1)^-}=d_{\ell^-}-1$, so
$\deg a_{(\ell-1)^-}=D+2-d_{\ell^-}$, hence $\widetilde Q_{\ell-1}$
is zero modulo $J_{D+2-d_{\ell^-}}[\tilde z]$ by (ii)${}_{\ell-1}$.
Using (i)${}_{\ell-1}$, we conclude that
$\widetilde Q_\ell\equiv a_\ell\pmod{J_{D+2-d_{\ell^-}}[\tilde z]}$,
so (ii)${}_\ell$ holds.

The third situation, namely $v(\ell)=w(\ell)=+$ and $\ell\notin S(v)$,
presents no difficulties.
\end{proof}

\begin{lemma}
\label{le:ValTildeC}
\mbox{}

\vspace{-12pt}
\begin{enumerate}
\item
\label{it:LeVTCa}
For $\ell\in\{2,\ldots,n-1\}\cap P(v)\cap S(v)$, we have
$\widetilde c_\ell\equiv-a_\ell\,x+ a_{(\ell-1)^-}\pmod{J_{D+3-d_\ell}}$.
\item
\label{it:LeVTCb}
We have $\widetilde c_n\equiv a_L\sigma_n-x\pmod{J_2}$.
\end{enumerate}
\end{lemma}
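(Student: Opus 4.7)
The plan is to substitute the asymptotic information from Lemma~\ref{le:ValTildeP} into the defining formulas for $\widetilde c_\ell$ and $\widetilde c_n$, and then bookkeep the remaining error terms against the graduation on $A_n$.

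For part~\ref{it:LeVTCa}, I fix $\ell\in\{2,\ldots,n-1\}\cap P(v)\cap S(v)$ and record two numerical facts. First, $\ell\in S(v)$ combined with $v(\ell)=+$ forces $d_\ell>D_{\ell-1}$ while $d_\ell=d_{\ell-1}+1$, so $D_{\ell-1}=d_{\ell-1}=d_\ell-1$; second, since $L$ is the largest element of $P(v)\cap S(v)$, we have $\ell\leq L$, hence $\ell-1<L$. The first half of Lemma~\ref{le:ValTildeP}~(i)${}_{\ell-1}$ then yields $\widetilde P_{\ell-1}(0)\equiv-x\pmod{J_2}$, and~(ii)${}_{\ell-1}$ evaluated at $\tilde z=0$—where the exponent $D_{\ell-1}-d_{\ell-1}$ is zero—gives $\widetilde Q_{\ell-1}(0)\equiv a_{(\ell-1)^-}\pmod{J_{D+3-d_\ell}}$, using $d_{(\ell-1)^-}=d_\ell-1$. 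Because $\deg a_\ell=D+1-d_\ell$, multiplying the first congruence by $a_\ell$ promotes its $J_2$-error into $J_{D+3-d_\ell}$, and summing produces
$$\widetilde c_\ell=a_\ell\,\widetilde P_{\ell-1}(0)+\widetilde Q_{\ell-1}(0)\equiv-a_\ell\,x+a_{(\ell-1)^-}\pmod{J_{D+3-d_\ell}}.$$

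For part~\ref{it:LeVTCb}, the assumption $v(n)=-$ gives $d_n<d_{n-1}\leq D_{n-1}$, so $D=D_n=D_{n-1}$; and since the last letter of $v$ is $-$ we have $L\leq n-1$, hence $(n-1)^-=L$ and $d_{(n-1)^-}=D$. The second half of Lemma~\ref{le:ValTildeP}~(i)${}_{n-1}$ delivers $\widetilde P_{n-1}(0)\equiv a_L\sigma_{n-1}-x\pmod{J_2}$, while~(ii)${}_{n-1}$ specializes to $\widetilde Q_{n-1}(\tilde z)\equiv\tilde z^{\,D-d_{n-1}}a_L\pmod{J_2[\tilde z]}$. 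Setting $\tilde z=0$, this last expression contributes $a_L$ or~$0$ modulo~$J_2$ according as $d_{n-1}=D$ or $d_{n-1}<D$. Since $\deg a_n=0$, multiplication by $a_n$ preserves the $J_2$-modulus; moreover, the definition of $\sigma_n$ adds $a_n$ to $\sigma_{n-1}$ precisely when $v(n)=-$ and $d_{n-1}=D$, so $a_n\widetilde Q_{n-1}(0)\equiv a_L(\sigma_n-\sigma_{n-1})\pmod{J_2}$ in both cases. Summing the two contributions gives $\widetilde c_n\equiv a_L\sigma_n-x\pmod{J_2}$.

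The only real obstacle is the degree bookkeeping: for each error term (produced either by a $J_2$-residue in Lemma~\ref{le:ValTildeP} or by the leading-order approximation of $\widetilde Q_{\ell-1}$) one must verify, after the subsequent multiplication by a variable of known degree, that it indeed lands in the claimed ideal $J_d$. The case split on whether $d_{n-1}=D$ in part~\ref{it:LeVTCb} is painless since the combinatorial rule defining $\sigma_n$ matches it exactly, so the algebraic and combinatorial sides align automatically.
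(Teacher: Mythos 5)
Your proof is correct and follows essentially the same route as the paper: in both cases the claim is read off from Lemma~\ref{le:ValTildeP} by specializing at $\tilde z=0$ and checking that the multiplication by $a_\ell$ (resp.\ $a_n$) shifts the modulus appropriately according to the graduation. Your version is merely more explicit about the degree bookkeeping and the (redundant, given the standing hypothesis $d_{n-1}=D$) case split in part~\ref{it:LeVTCb}.
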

\begin{proof}
Let $\ell\in\{2,\ldots,n-1\}\cap P(v)\cap S(v)$. Then
$D_{\ell-1}=d_{\ell-1}$ and $d_{(\ell-1)^-}=d_\ell-1$. By
Lemma~\ref{le:ValTildeP}, $\widetilde P_{\ell-1}(0)\equiv-x\pmod{J_2}$ and
$\widetilde Q_{\ell-1}(0)\equiv a_{(\ell-1)^-}\pmod{J_{D+3-d_\ell}}$.
This gives~\ref{it:LeVTCa}.

Since the last letter of $w'$ is assumed to be significant,
we have $d_{n-1}=D_{n-1}=D$, so $\sigma_n=\sigma_{n-1}+a_n$.
From Lemma~\ref{le:ValTildeP}, we get
$\widetilde P_{n-1}(0)\equiv a_L\sigma_{n-1}-x\pmod{J_2}$ and
$\widetilde Q_{n-1}(0)\equiv a_L\pmod{J_2}$. This gives~\ref{it:LeVTCb}.
\end{proof}

\begin{lemma}
\label{le:Elimin}
There exists an element $\widetilde g\in A_n$, which depends only on the
variables $x$, $a_1$, and $a_j$ with $v(j)=-$, such that
\begin{alignat}2
\label{eq:CongTildeG}
\widetilde g&\equiv
\widetilde c_n\,x^{D-1}\times\prod_{\substack{\ell\in P(v)\cap S(v)\\\ell\geq2}}
\Bigl(-\widetilde P_{\ell-1}(0)\Bigr)^{p_\ell}&&
\pmod{\widetilde{\mathfrak q}_L}\\[8pt]
\label{eq:NewtonVert}
\widetilde g&\equiv x^q\,\bigl(a_1\sigma_n-x^D\bigr)&&\pmod{J_{q+D+1}}
\end{alignat}
where each $p_\ell$ and $q$ are nonnegative integers.
\end{lemma}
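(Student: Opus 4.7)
The plan is to construct $\widetilde g$ by eliminating the variables $a_\ell$ for $\ell\in P(v)\cap S(v)$ with $\ell\geq 2$ from $\widetilde c_n\,x^{D-1}$, one at a time, in decreasing order of $\ell$. The elimination rests on the exact identity
$$\widetilde P_{\ell-1}(0)\,a_\ell+\widetilde Q_{\ell-1}(0)=\widetilde c_\ell\qquad\text{in }A_n,$$
which is simply the definition of $\widetilde c_\ell$ when $v(\ell)=+$ and $\ell\in S(v)$. Because $v(\ell)=w(\ell)=+$ for every $\ell\in P(v)\cap S(v)\cap\{2,\ldots,L\}$, each such $\widetilde c_\ell$ lies in $\widetilde{\mathfrak q}_L$, so the identity yields the key relation $(-\widetilde P_{\ell-1}(0))\,a_\ell\equiv\widetilde Q_{\ell-1}(0)\pmod{\widetilde{\mathfrak q}_L}$, which lets me substitute and make $a_\ell$ disappear at the cost of multiplying by a suitable power of $-\widetilde P_{\ell-1}(0)$.

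I enumerate $P(v)\cap S(v)=\{\ell_1<\cdots<\ell_D\}$, so that $\ell_1=1$ and $\ell_D=L$, and construct $g_k\in A_n$ by downward induction on $k$, starting from $g_D=\widetilde c_n\,x^{D-1}$. Given $g_k$, I write it as a polynomial $g_k=\sum_j c_j\,a_{\ell_k}^j$ in $a_{\ell_k}$ with each $c_j$ not involving $a_{\ell_k}$, let $p_{\ell_k}=\deg_{a_{\ell_k}}g_k$, and set
$$g_{k-1}=\sum_j c_j\,\widetilde Q_{\ell_k-1}(0)^j\,(-\widetilde P_{\ell_k-1}(0))^{p_{\ell_k}-j}.$$
Expanding $a_{\ell_k}(-\widetilde P_{\ell_k-1}(0))=\widetilde Q_{\ell_k-1}(0)-\widetilde c_{\ell_k}$ via the binomial theorem shows $(-\widetilde P_{\ell_k-1}(0))^{p_{\ell_k}}g_k\equiv g_{k-1}\pmod{\widetilde c_{\ell_k}}$; chaining these congruences from $k=D$ down to $k=1$ delivers~\eqref{eq:CongTildeG} with $\widetilde g=g_1$ and $q=q_1$. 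The variable-dependence requirement is automatic at each step: the $c_j$ inherit it from $g_k$ by construction, and $\widetilde P_{\ell_k-1}(0),\widetilde Q_{\ell_k-1}(0)$ depend only on $x,a_1,\ldots,a_{\ell_k-1}$, further restricted by Remark~\ref{rk:DependVar} to exclude the $a_j$ with $j\in P(v)\setminus S(v)$. Since $a_{\ell_1}=a_1$, at $k=1$ the only surviving variables are $x$, $a_1$, and the $a_j$ with $v(j)=-$.

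The main obstacle is verifying the Newton-vertex condition~\eqref{eq:NewtonVert} along the induction. I will carry the auxiliary invariant
$$g_k\equiv a_{\ell_k}\,\sigma_n\,x^{q_k+k-1}-x^{q_k+D}\pmod{J_{q_k+D+1}},$$
with base case $q_D=0$ supplied directly by Lemma~\ref{le:ValTildeC}\ref{it:LeVTCb}, and inductive step $q_{k-1}=q_k+p_{\ell_k}$. The degree bookkeeping rests on Lemma~\ref{le:ValTildeP}, which gives $-\widetilde P_{\ell_k-1}(0)\equiv x\pmod{J_2}$ and $\widetilde Q_{\ell_k-1}(0)\equiv a_{\ell_{k-1}}\pmod{J_{D+3-k}}$ (so in particular $\widetilde Q_{\ell_k-1}(0)\in J_{D+2-k}$), combined with the coefficient bounds read off from the inductive hypothesis: $c_0\equiv-x^{q_k+D}\pmod{J_{q_k+D+1}}$, $c_1\equiv\sigma_n\,x^{q_k+k-1}\pmod{J_{q_k+k}}$, and $\deg c_j\geq q_k+D+1-j(D+1-k)$ for $j\geq 2$. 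Multiplying out term by term: the $j=0$ summand contributes $-x^{q_{k-1}+D}$ and the $j=1$ summand contributes $a_{\ell_{k-1}}\sigma_n\,x^{q_{k-1}+k-2}$, both modulo $J_{q_{k-1}+D+1}$; for $j\geq 2$, adding $\deg c_j$, $j\deg\widetilde Q_{\ell_k-1}(0)\geq j(D+2-k)$, and $(p_{\ell_k}-j)\deg(-\widetilde P_{\ell_k-1}(0))\geq p_{\ell_k}-j$ telescopes to at least $q_{k-1}+D+1$. The delicate point is that the various cross-terms do not leak into degree $\leq q_{k-1}+D$; this closes precisely because the graduation fixed in sect.~\ref{ss:IncMulI} is calibrated so that $a_{\ell_k}$ and $x$ play complementary roles, making the worst-case contribution of each $j\geq 2$ summand exactly one degree beyond what \eqref{eq:NewtonVert} demands.
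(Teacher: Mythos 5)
Your proof is correct, and it reaches the same elimination mechanism as the paper --- expand in $a_{\ell_k}$, substitute $(-\widetilde P_{\ell_k-1}(0))a_{\ell_k}\equiv\widetilde Q_{\ell_k-1}(0)\pmod{\widetilde c_{\ell_k}}$ --- but the initial step is genuinely different. The paper begins with
$$\widetilde g_L=\widetilde c_n\,x^{D-1}+\sum_{\substack{\ell\in P(v)\cap S(v)\\\ell\geq2}}\widetilde c_\ell\,\sigma_n\,x^{d_\ell-2},$$
whose sum over $\ell$ telescopes (via Lemma~\ref{le:ValTildeC}\ref{it:LeVTCa}) to $-a_L\sigma_n x^{D-1}+a_1\sigma_n$, so that $\widetilde g_L\equiv a_1\sigma_n-x^D\pmod{J_{D+1}}$ \emph{before} any elimination; the invariant being carried is then literally~\eqref{eq:NewtonVert} itself, with $q$ simply increasing at each step. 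You instead start from the bare $g_D=\widetilde c_n\,x^{D-1}\equiv a_L\sigma_n x^{D-1}-x^D\pmod{J_{D+1}}$ and carry the stronger invariant
$$g_k\equiv a_{\ell_k}\sigma_n x^{q_k+k-1}-x^{q_k+D}\pmod{J_{q_k+D+1}},$$
which tracks the slot variable $a_{\ell_k}$ as it cascades from $a_L$ down to $a_1$ under the substitution $\widetilde Q_{\ell_k-1}(0)\equiv a_{\ell_{k-1}}\pmod{J_{D+3-k}}$; \eqref{eq:NewtonVert} then falls out at $k=1$ since $\ell_1=1$. This avoids having to guess the telescoping correction and makes the mechanism by which $a_1$ appears more transparent, at the cost of carrying a more detailed auxiliary congruence. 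Your degree bookkeeping is right: the graduation assigns $\deg a_{\ell_k}=D+1-k$, $\deg\widetilde Q_{\ell_k-1}(0)\geq D+2-k$ and $\deg(-\widetilde P_{\ell_k-1}(0))\geq1$, and $-j(D+1-k)+j(D+2-k)+(p_{\ell_k}-j)=p_{\ell_k}$ makes each $j\geq2$ summand land in $J_{q_{k-1}+D+1}$ exactly. One nit: the elimination congruences $(-\widetilde P_{\ell_k-1}(0))^{p_{\ell_k}}g_k\equiv g_{k-1}\pmod{\widetilde c_{\ell_k}}$ are invoked for $k$ from $D$ down to $2$, not to $1$, since there is no $\widetilde c_1$; but this is a phrasing slip, as you clearly stop with $g_1$ still depending on $a_1$.
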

\begin{proof}
Consider
$$\widetilde g_L=\widetilde c_n\,x^{D-1}+\sum_{\substack{\ell\in P(v)
\cap S(v)\\[2pt]\ell\geq2}}\widetilde c_\ell\,\sigma_n\,x^{d_\ell-2}.$$
An immediate calculation based on Lemma~\ref{le:ValTildeC} yields
$$\widetilde g_L\equiv a_1\sigma_n-x^D\pmod{J_{D+1}}.$$
This $\widetilde g_L$ meets the specifications for $\widetilde g$
(with $p_\ell$ and $q$ all equal to zero) except that it may involve
other variables than those prescribed.

We are not bothered by the variables $a_j$ for $j\in P(v)\setminus S(v)$
because $\widetilde g_L$ do not depend on them (see Remark~\ref{rk:DependVar}).
The variables $x$ and $a_j$ with $v(j)=-$ are allowed. The only trouble comes
then from the variables $a_j$ with $j\in\{2,\ldots,n-1\}\cap P(v)\cap S(v)$.
We will eliminate them in turn.

Assume that $L\geq2$. Let $\ell\in\{2,\ldots,n-1\}\cap P(v)\cap S(v)$
and assume that we succeeded in constructing an element
$\widetilde g_\ell\in\widetilde{\mathfrak q}_n$ which satisfies
 \eqref{eq:CongTildeG} and \eqref{eq:NewtonVert} and depends only
on the variables $x$ and $a_j$ with $v(j)=-$ or $j\leq\ell$.
Expand $\widetilde g_\ell$ as a polynomial in $a_\ell$
$$\widetilde g_\ell=\sum_{s=0}^rh_s\,a_\ell^s$$
where the coefficients $h_s$ only depend on $x$ and on the variables
$a_j$ such that $v(j)=-$ or $j<\ell$. Then define
$$\widetilde g_{(\ell-1)^-}=\sum_{s=0}^rh_s\,
\Bigl(-\widetilde P_{\ell-1}(0)\Bigr)^{r-s}\,
\Bigl(\widetilde Q_{\ell-1}(0)\Bigr)^s.$$
This $\widetilde g_{(\ell-1)^-}$ only involves the variables $x$ and
$a_j$ with $v(j)=-$ or $j\leq\ell-1$. In fact, we can strengthen
the latter inequality to $j\leq(\ell-1)^-$ because
$\widetilde g_{(\ell-1)^-}$ does not depend on the variables
$a_j$ with $j\in P(v)\setminus S(v)$. Moreover,
$\widetilde g_{(\ell-1)^-}$ also satisfies  \eqref{eq:CongTildeG}
and \eqref{eq:NewtonVert}, but for different integers than
$\widetilde g_\ell$: one has to increase $p_\ell$ and $q$ by $r$.
(To verify that $\widetilde g_{(\ell-1)^-}$ satisfies
\eqref{eq:NewtonVert} with $q+r$ instead of $q$, one observes that
\begin{alignat*}2
h_0&\equiv x^q\,\bigl(a_1\sigma_n-x^D\bigr)&&(\bmod\,\,J_{q+D+1})\\[4pt]
h_s&\in J_{q+D+1-s(D+1-d_\ell)}&\quad&\text{for each
$s\in\{1,\ldots,r\}$}
\end{alignat*}
and uses Lemma~\ref{le:ValTildeP}.)

At the end of the process, we obtain an element
$\widetilde g=\widetilde g_1$ which enjoys the desired properties.
\end{proof}

Let us recall a few important points:

\vspace{-12pt}
\begin{itemize}
\item
$A_n=\mathbb C[x_2][x,a_1,\ldots,a_n]$ is the coordinate ring
of $(\upphi_v)^{-1}(U_v)$. The variable $x_2$ is dumb (no equations
depend on it); we get rid of it by specializing it to an arbitrary value.
\item
The ring $B_1$ is $\mathbb C[x_2]\bigl[x,a_1,f_1^{-1}\bigr]$
with $f_1=a_1$. For $\ell\geq2$, we produce an explicit function
$f_\ell\in B_{\ell-1}[a_\ell]$ and we set
$B_\ell=B_{\ell-1}\bigl[a_\ell,f_\ell^{-1}\bigr]$. The ring $B_n$
is the coordinate ring of~$(\upphi_v)^{-1}(U_v\cap U_w)$.
\item
$\mathscr S_n$ is a finitely generated multiplicative subset of
$A_n$ such that $B_n=\mathscr S_n^{-1}A_n$.
\item
Polynomials $\widetilde c_\ell\in A_\ell$ are defined for each
$\ell\in P(w)$. The ideal of $A_n$ generated by these elements is
denoted by $\widetilde{\mathfrak q}_n$.
\item
The ideal $\mathfrak p\subset A_n$ of
$(\upphi_v)^{-1}(\mathcal Y(\mathbf Z_v))$
is generated by the variables $x$ and $a_\ell$ for $\ell\in P(v)$.
\item
The ideal $\mathfrak q\subset A_n$ of
$(\upphi_v)^{-1}(U_v\cap\mathcal X(\mathbf Z_w,(1,n-1)))$
is the saturation of $\widetilde{\mathfrak q}_n$ with respect to
$\mathscr S_n$.
\item
$\sigma_1$, \dots, $\sigma_n$ are certain sums of variables $a_\ell$
with $v(\ell)=-$; these linear forms are not pairwise distinct, but
$\sigma_n$ differs from all the other ones, for only it involves $a_n$.
\end{itemize}

\begin{lemma}
\label{le:GermCurv}
Fix $\alpha_\ell\in\mathbb C$ for each $\ell\in\{1,\ldots,n\}\setminus
P(v)$ such that, when $a_\ell$ is assigned the value $\alpha_\ell$,
the linear form $\sigma_n$ takes a value different from all the other
$\sigma_j$. Consider these numbers $\alpha_\ell$ as constant
functions of the variable $\xi$. Set also $\alpha_\ell=0$ for
$\ell\in P(v)\setminus S(v)$. Then there exists a neighborhood $\Omega$
of $0$ in $\mathbb C$ and analytic functions
$\alpha_\ell:\Omega\to\mathbb C$ for $\ell\in P(v)\cap S(v)$ such that
\begin{enumerate}
\item
\label{it:LeGCa}
If $\ell\in P(v)\cap S(v)$, then $\alpha_\ell(\xi)\sim
\xi^{D+1-d_\ell}/\sigma_n$.
\item
\label{it:LeGCb}
The point $(\xi,\alpha_1(\xi),\ldots,\alpha_n(\xi))$
belongs to the zero locus of\/ $\widetilde{\mathfrak q}_n$ for each
$\xi\in\Omega$.
\item
\label{it:LeGCc}
The point $(\xi,\alpha_1(\xi),\ldots,\alpha_n(\xi))$ belongs to
$U_w$ for each $\xi\neq0$ in $\Omega$.
\end{enumerate}
\end{lemma}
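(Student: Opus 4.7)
The plan is to use a weighted implicit function theorem. Under the standing hypothesis $(v(1),w(1))=(+,-)$ the letter $v(1)=+$ is automatically significant, so we may write $P(v)\cap S(v)=\{\ell_1<\cdots<\ell_D\}$ with $\ell_1=1$, $d_{\ell_j}=j$ and $\ell_D=L$. For $\ell\in P(v)\setminus S(v)$ the generator $\widetilde{c}_\ell$ reduces to the variable $a_\ell$, which is killed by the prescribed assignment $\alpha_\ell=0$. After plugging in the fixed constants, the remaining generators of $\widetilde{\mathfrak q}_n$ to be annihilated are $\widetilde{c}_{\ell_2},\ldots,\widetilde{c}_{\ell_D}$ and $\widetilde{c}_n$: a square system of $D$ equations in the $D$ unknowns $\alpha_{\ell_j}$, parameterized by $\xi=x$.

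Following the asymptotic prescribed by~\ref{it:LeGCa}, the substitution $a_{\ell_j}=\beta_{\ell_j}\,\xi^{D+1-j}$ and $x=\xi$ is homogeneous for the weighted grading of section~\ref{ss:IncMulI}, and Lemma~\ref{le:ValTildeC} produces
\begin{align*}
\xi^{-(D+2-j)}\,\widetilde{c}_{\ell_j}&=\beta_{\ell_{j-1}}-\beta_{\ell_j}+O(\xi),\qquad j=2,\ldots,D,\\
\xi^{-1}\,\widetilde{c}_n&=\sigma_n\,\beta_{\ell_D}-1+O(\xi).
\end{align*}
These are analytic in $(\xi,\beta_{\ell_1},\ldots,\beta_{\ell_D})$ near the origin. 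Since $\sigma_\ell=0$ for every $\ell\leq L$, the distinctness hypothesis on $\sigma_n$ forces $\sigma_n\neq 0$; the Jacobian of the system at $\xi=0$ with respect to $(\beta_{\ell_1},\ldots,\beta_{\ell_D})$ is then upper triangular with diagonal $(1,\ldots,1,\sigma_n)$, hence invertible. The implicit function theorem yields analytic $\beta_{\ell_j}(\xi)$ on a neighborhood $\Omega$ of $0$ with $\beta_{\ell_j}(0)=1/\sigma_n$. Setting $\alpha_{\ell_j}(\xi)=\xi^{D+1-j}\,\beta_{\ell_j}(\xi)$, assertion~\ref{it:LeGCa} is immediate and~\ref{it:LeGCb} holds by construction.

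The main obstacle is~\ref{it:LeGCc}: one must show that the curve enters the chart domain $U_w$ for $\xi\neq 0$, i.e.\ that none of the denominators $f_\ell$ from the recursion of section~\ref{ss:TransMaps} vanishes along the curve for small non-zero $\xi$. Since each $f_\ell$ is a polynomial in $A_n$, its pullback to $\Omega$ is analytic in $\xi$, and it is enough to rule out identical vanishing. I would compute the leading $\xi$-orders of $P_\ell(x_2)$, $Q_\ell(x_2)$, $R_\ell(x_2)$, $S_\ell(x_2)$ along the curve by an induction modeled on Lemmas~\ref{le:Induc} and~\ref{le:ValTildeP}: the identity $\det\begin{pmatrix}P_\ell&Q_\ell\\R_\ell&S_\ell\end{pmatrix}=1$ prevents the numerator and denominator of $b_\ell$ from vanishing simultaneously at leading order, while the precise weighted orders---combined with the distinctness $\sigma_n\neq\sigma_j$ for $j\neq n$---pin down which factor survives and ensure that $f_\ell$ has a non-zero leading coefficient in $\xi$. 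This combinatorial bookkeeping, rather than the implicit function argument itself, is the computational heart of the lemma.
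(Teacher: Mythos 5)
Your treatment of parts~\ref{it:LeGCa} and~\ref{it:LeGCb} is correct and takes a genuinely different route from the paper. The paper first builds the eliminated polynomial $\widetilde g$ of Lemma~\ref{le:Elimin}, which depends only on $x$, $a_1$, and the $a_j$ with $v(j)=-$, reads off from its Newton polygon (via the congruence~\eqref{eq:NewtonVert}) a unique root $\alpha_1\in\mathbb C(\!(x)\!)$ of valuation $D$ with positive radius of convergence, and then recovers the remaining $\alpha_\ell$ by the back-substitution~\eqref{eq:GermCurv}, using~\eqref{eq:CongTildeG} to certify that $\widetilde c_n$ also vanishes. You instead blow up the system directly: the weighted rescaling $a_{\ell_j}=\beta_{\ell_j}\xi^{D+1-j}$, $x=\xi$ turns the $D$ remaining generators $\widetilde c_{\ell_2},\ldots,\widetilde c_{\ell_D},\widetilde c_n$, after division by the appropriate power of~$\xi$, into an analytic square system whose Jacobian at $\xi=0$ is triangular with determinant $\sigma_n\neq0$, and the implicit function theorem finishes. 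This bypasses Lemma~\ref{le:Elimin} entirely; your computation of the leading terms from Lemma~\ref{le:ValTildeC} is right (in particular, the grading ensures every monomial of $\widetilde c_{\ell_j}$ has degree $\geq D+2-j$, so the rescaled function really is analytic), and the hypothesis $\sigma_n\neq\sigma_j$ for $j<n$ together with $\sigma_\ell=0$ for $\ell\leq L$ does give $\sigma_n\neq0$.

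Part~\ref{it:LeGCc}, however, you leave as a sketch rather than a proof, and this is where the actual work of the lemma lies. You correctly identify the ingredients, but the determinant identity alone does not settle the question: it only tells you that the numerator and denominator of $b_\ell$ cannot both vanish at a point, not which one does. The paper's case analysis is more than a leading-order estimate. In the cases $(v(\ell),w(\ell))\in\{(+,+),(-,+)\}$ one uses the \emph{exact} vanishing of the numerator of $b_\ell$ along the curve (enforced by $\widetilde c_\ell=0$ together with the congruences of Lemma~\ref{le:Induc}); combined with the determinant relation this yields the identity $P_{\ell-1}(x_2)\,f_\ell=1$ along the curve, so $f_\ell$ is nonvanishing outright. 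In the case $(v(\ell),w(\ell))=(-,-)$ the denominator $f_\ell=(\widetilde P_{\ell-1}+a_\ell\widetilde Q_{\ell-1})(0)$ is computed via Lemma~\ref{le:ValTildeP} and the asymptotic~\ref{it:LeGCa} to have leading term $(\sigma_\ell/\sigma_n-1)\,\xi$; this is where the distinctness hypothesis $\sigma_\ell\neq\sigma_n$ for $\ell<n$ is actually consumed, and it is what prevents $f_\ell$ from vanishing identically in $\xi$. Without carrying out this induction, the proposal has not established that the germ of curve enters $U_w$ for $\xi\neq0$, which is what makes the curve relevant to the ideal $\mathfrak q$.
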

\begin{proof}
Let $\widetilde g$ be as in Lemma~\ref{le:Elimin}.
We consider that the variables $a_\ell$ with $\ell>1$ occuring in
$\widetilde g$ are assigned the values $\alpha_\ell$ fixed in the
statement of the lemma. We can then regard $\widetilde g$ as a polynomial
in the indeterminates $x$ and $a_1$ with complex coefficients,
or as a polynomial in the indeterminate $a_1$ with coefficients in the
valued field $\mathbb C(\!(x)\!)$. Equation \eqref{eq:NewtonVert} shows that
the points $(0,D+q)$ and $(1,q)$ are vertices of the Newton polygon of
$\widetilde g$. Therefore $\widetilde g$ admits a unique root of valuation
$D$ in $\mathbb C(\!(x)\!)$, which we denote by $\alpha_1$, and the power
series $\alpha_1$ has a positive radius of convergence. Proceeding by
induction on $\ell\in\{2,\ldots,n-1\}\cap P(v)\cap S(v)$, and solving the
equation $\widetilde c_\ell=0$, we define
\begin{equation}
\label{eq:GermCurv}
\alpha_\ell(\xi)=-\widetilde Q_{\ell-1}(0)/\widetilde P_{\ell-1}(0),
\end{equation}
where the right-hand side is evaluated at
$(\xi,\alpha_1(\xi),\ldots,\alpha_{\ell-1}(\xi))$; this is a well-defined
process and $\alpha_\ell(\xi)$ satisfies the equivalent given in the
statement, because Lemma~\ref{le:ValTildeP} guarantees that after
evaluation
$$\widetilde P_{\ell-1}(0)=-\xi+O\bigl(\xi^2\bigr)\quad\text{and}\quad
\widetilde Q_{\ell-1}(0)=\alpha_{(\ell-1)^-}(\xi)+
O\Bigl(\xi^{D+2-d_{(\ell-1)^-}}\Bigr),$$
so the denominator in~\eqref{eq:GermCurv} does not vanish if
$\xi\neq0$. Moreover, \eqref{eq:CongTildeG} ensures that the
equation $\widetilde c_n=0$ is enforced too. Therefore this
construction gives~\ref{it:LeGCa} and \ref{it:LeGCb}.

We will prove~\ref{it:LeGCc} by showing that none of the
functions $f_\ell$ vanish when evaluated on the point
$(\xi,\alpha_1(\xi),\ldots,\alpha_n(\xi))$ with $\xi\neq0$.
This is true for $\ell=1$, because $f_1=a_1$ and
$\alpha_1(\xi)\sim\xi^D/\sigma_n$. We assume known that $f_1$,
\dots, $f_{\ell-1}$ do not vanish on our germ of curve.

\vspace{-8pt}
\begin{itemize}
\item
In the case $(v(\ell),w(\ell))=(+,+)$, we have
$$f_\ell=\bigl(a_\ell R_{\ell-1}+S_{\ell-1}\bigr)\bigl(x_2\bigr).$$
The congruences in Lemma~\ref{le:Induc} allow to rewrite the
equation $\widetilde c_\ell=0$ in the form
$$\bigl(a_\ell P_{\ell-1}+Q_{\ell-1}\bigr)\bigl(x_2\bigr)=0;$$
this is satisfied after evaluation at the point
$(\xi,\alpha_1(\xi),\ldots,\alpha_n(\xi))$. Using then the relation
$\bigl(P_{\ell-1}S_{\ell-1}-Q_{\ell-1}R_{\ell-1}\bigr)\bigl(x_2\bigr)=1$,
we obtain
$$P_{\ell-1}(x_2)\times f_\ell=P_{\ell-1}(x_2)\bigl(a_\ell R_{\ell-1}+S_{\ell-1}\bigr)
\bigl(x_2\bigr)=1+R_{\ell-1}(x_2)\bigl(a_\ell P_{\ell-1}+Q_{\ell-1}\bigr)
\bigl(x_2\bigr)=1.$$
Thus, $f_\ell$ does not vanish at
$(\xi,\alpha_1(\xi),\ldots,\alpha_n(\xi))$.
\item
The case $(v(\ell),w(\ell))=(-,+)$, that is $\ell=n$, is amenable to
a similar treatment.
\item
The remaining case is $(v(\ell),w(\ell))=(-,-)$. Here by
Lemma~\ref{le:Induc} we have after substitution
$$f_\ell=\bigl(P_{\ell-1}+a_\ell Q_{\ell-1}\bigr)\bigl(x_2\bigr)
=\bigl(\widetilde P_{\ell-1}+a_\ell\widetilde Q_{\ell-1}\bigr)\bigl(0\bigr),$$
and by Lemma~\ref{le:ValTildeP} and the equivalence in~\ref{it:LeGCa}
$$\widetilde P_{\ell-1}(0)=(\sigma_{\ell-1}/\sigma_n-1)\,\xi
+O\bigl(\xi^2\bigr)\ \;\text{and}\ \;\widetilde Q_{\ell-1}(0)=
\begin{cases}
\xi/\sigma_n+O(\xi^2)&\text{if $d_{\ell-1}=D_{\ell-1}=D$,}\\
O(\xi^2)&\text{otherwise.}
\end{cases}$$
Therefore $f_\ell$ is equivalent to $(\sigma_\ell/\sigma_n-1)\,\xi$.
Shrinking $\Omega$ if necessary, we can ensure that $f_\ell$ does not
vanish.
\end{itemize}

\vspace{-8pt}
This concludes the induction and establishes~\ref{it:LeGCc}.
\end{proof}

To sum up, we construct a germ of smooth algebraic curve contained in
the zero locus of $\widetilde{\mathfrak q}_n$. The ideal of this curve
is a prime ideal of $A_n$ which contains $\widetilde{\mathfrak q}_n$
and is disjoint from $\mathscr S_n$; hence it contains $\mathfrak q$.
As a result, our curve is contained in
$(\upphi_v)^{-1}(U_v\cap\mathcal X(\mathbf Z_w,(1,n-1)))$. The point
at $\xi=0$ of this curve has for coordinates the values $\alpha_\ell$
chosen for each $\ell\in\{1,\ldots,n\}\setminus P(v)$, contingent on
$\sigma_n\neq\sigma_j$ for $j\in\{1,\ldots,n-1\}$, the other coordinates
being zero. Such points form an open dense subset of
$(\upphi_v)^{-1}(\mathcal Y(\mathbf Z_v))$, so we conclude that
$\mathcal Y(\mathbf Z_v)\subset\mathcal X(\mathbf Z_w,(1,n-1))$.
This proves the missing half of Proposition~\ref{pr:IncMulI}~\ref{it:PrIMIa}
(the first half was obtained just after Lemma~\ref{le:Exclus}).

As a consequence, $\mathfrak q\subset\mathfrak p$. To ease the reading
of the sequel, we will omit the subscripts $n$ in the notation $A_n$ and
$\widetilde{\mathfrak q}_n$. For $\ell\in\{1,\ldots,n\}$, we set
$R(\ell)=\bigl\{j\in\{2,\ldots,\ell\}\bigm|v(j)=-,\;d_{j-1}=D_{j-1}\bigr\}$.

\begin{lemma}
\label{le:PrepNaka}
\begin{enumerate}
\item
\label{it:LePNa}
For each $\ell\in\{1,\ldots,n-1\}$, we have
\begin{align*}
&\widetilde P_\ell\equiv\tilde z\pmod{\mathfrak p[\tilde z]},\qquad
\widetilde Q_\ell\equiv\tilde z^{D_\ell-d_\ell}\,a_{\ell^-}
\pmod{\mathfrak p^2[\tilde z]},\\[4pt]
&\widetilde P_\ell(0)\equiv-x+\sum_{j\in R(\ell)}a_{(j-1)^-}a_j
\pmod{\mathfrak p^2}.
\end{align*}
\item
\label{it:LePNb}
In the local ring $A_{\mathfrak p}$, we have
$\mathfrak pA_{\mathfrak p}=xA_{\mathfrak p}+\mathfrak qA_{\mathfrak p}
+\mathfrak p^2A_{\mathfrak p}$.
\end{enumerate}
\end{lemma}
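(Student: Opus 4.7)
The plan is to prove part (i) by simultaneous induction on $\ell$ and then deduce part (ii) from part (i) by using the approximations $\widetilde c_j$ as targeted equations for the generators of $\mathfrak p$.

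For part (i), the base case $\ell=1$ is immediate from $\widetilde P_1=\tilde z-x$, $\widetilde Q_1=a_1$, and the fact that $1\in P(v)\cap S(v)$ (so $1^-=1$ and $D_1=d_1=1$). The inductive step splits along the three cases of the recursive definition. When $v(\ell)=w(\ell)=+$ and $\ell\in S(v)$, the inequality $d_\ell>D_{\ell-1}$ together with $d_\ell=d_{\ell-1}+1$ forces $d_{\ell-1}=D_{\ell-1}$, so the induction hypothesis yields $(a_\ell\widetilde P_{\ell-1}+\widetilde Q_{\ell-1})(0)\equiv a_{(\ell-1)^-}\pmod{\mathfrak p^2}$; subtracting this constant and dividing by $\tilde z$ leaves $\widetilde Q_\ell\equiv a_\ell\pmod{\mathfrak p^2[\tilde z]}$, which is the expected formula since here $\ell^-=\ell$ and $D_\ell=d_\ell$. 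When $v(\ell)=w(\ell)=+$ and $\ell\notin S(v)$, we have $d_{\ell-1}<D_{\ell-1}$, so $\widetilde Q_{\ell-1}(0)\in\mathfrak p^2$, and dividing by $\tilde z$ simply lowers the exponent of the leading $\tilde z^{D_{\ell-1}-d_{\ell-1}}$ factor by one. When $v(\ell)=w(\ell)=-$, the shift $\widetilde Q_\ell=\tilde z\widetilde Q_{\ell-1}$ updates the second congruence, and the new contribution $a_\ell\widetilde Q_{\ell-1}(0)$ to $\widetilde P_\ell(0)$ adds exactly $a_\ell a_{(\ell-1)^-}$ modulo $\mathfrak p^2$ precisely when $d_{\ell-1}=D_{\ell-1}$, which is exactly when $\ell$ is added to $R(\ell)$. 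The first congruence $\widetilde P_\ell\equiv\tilde z\pmod{\mathfrak p[\tilde z]}$ is routinely preserved in all three cases.

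For part (ii), I handle the generators of $\mathfrak p$ one at a time. The element $x$ trivially lies in $xA_{\mathfrak p}$. For $\ell\in P(v)\setminus S(v)$, the construction gives $\widetilde c_\ell=a_\ell$, so $a_\ell\in\widetilde{\mathfrak q}\subset\mathfrak q$. Enumerate the significant $+$ positions as $p_1<\cdots<p_D=L$. For $k<D$, I expand $\widetilde c_{p_{k+1}}=a_{p_{k+1}}\widetilde P_{p_{k+1}-1}(0)+\widetilde Q_{p_{k+1}-1}(0)$ modulo $\mathfrak p^2$: by part (i), the first summand lies in $\mathfrak p^2$ (both factors are in $\mathfrak p$), while the second is congruent to $a_{(p_{k+1}-1)^-}=a_{p_k}$. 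Hence $a_{p_k}\in\mathfrak q+\mathfrak p^2$ for every $k<D$. For the remaining generator $a_L=a_{p_D}$, I use $\widetilde c_n=\widetilde P_{n-1}(0)+a_n\widetilde Q_{n-1}(0)$: the assumption that the last letter of $w'$ is significant yields $d_{n-1}=D_{n-1}=D$ and $(n-1)^-=L$, so part (i) gives $\widetilde Q_{n-1}(0)\equiv a_L\pmod{\mathfrak p^2}$ and $\widetilde P_{n-1}(0)\equiv-x+\sum_{j\in R(n-1)}a_{(j-1)^-}a_j\pmod{\mathfrak p^2}$. I split the sum by whether $(j-1)^-=L$: the ``no'' terms absorb into $\mathfrak q+\mathfrak p^2$ by the previous step, while the ``yes'' terms together with $a_na_L$ assemble into $a_L\sigma_n$, since the indices with $(j-1)^-=L$ are precisely those summed in $\sigma_n$. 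Therefore $-x+a_L\sigma_n\equiv\widetilde c_n\equiv0\pmod{\mathfrak q+\mathfrak p^2}$, so $a_L\sigma_n\in xA+\mathfrak q+\mathfrak p^2$. Since $\sigma_n$ is a linear combination of variables $a_j$ with $v(j)=-$, none of which lie in the prime ideal $\mathfrak p$, the element $\sigma_n$ is a unit in $A_{\mathfrak p}$, and we conclude $a_L\in xA_{\mathfrak p}+\mathfrak qA_{\mathfrak p}+\mathfrak p^2A_{\mathfrak p}$.

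The principal difficulty lies in the bookkeeping of part (i): showing that the quadratic correction in $\widetilde P_\ell(0)$ evolves exactly by $R(\ell)$ under the $v(\ell)=-$ step, and that the coefficient of $\tilde z^{D_\ell-d_\ell}$ in $\widetilde Q_\ell$ remains exactly $a_{\ell^-}$ across all case distinctions. Once part (i) is secured, part (ii) telescopes cleanly via the relations $\widetilde c_{p_{k+1}}\equiv a_{p_k}\pmod{\mathfrak p^2}$ for $k<D$ and the single relation $\widetilde c_n\equiv-x+a_L\sigma_n\pmod{\mathfrak q+\mathfrak p^2}$ together with the invertibility of $\sigma_n$ in the localization.
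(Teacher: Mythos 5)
Your proof is correct and follows the same route as the paper's. For part~(i), your case analysis of the three recursion steps matches what a ``banal induction'' would entail, and the key observation that $\ell\in S(v)$ forces $d_{\ell-1}=D_{\ell-1}$ is exactly what makes the $\ell\in S(v)$ case work. For part~(ii), your enumeration $p_1<\cdots<p_D$ of the significant plus positions and the telescoping relations $\widetilde c_{p_{k+1}}\equiv a_{p_k}\pmod{\mathfrak p^2}$ are a cleaner repackaging of the paper's statement that for $\ell\in(P(v)\cap S(v))\setminus\{L\}$ one has $a_\ell\equiv\widetilde c_m\pmod{\mathfrak p^2}$; likewise, splitting the quadratic sum in $\widetilde c_n$ by whether $(j-1)^-=L$ and absorbing the other terms into $\mathfrak q+\mathfrak p^2$ is equivalent to the paper's rewriting $\sum_{\ell}a_\ell\tau_\ell$ and noting $\tau_L=\sigma_n$. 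One small imprecision: you justify that $\sigma_n$ is a unit in $A_{\mathfrak p}$ by saying that none of its summands $a_j$ lies in $\mathfrak p$, but a sum of elements outside a prime ideal can of course land in it; the correct reason is that $\sigma_n$ is a \emph{nonempty} sum of distinct variables from the complement of the generating set of $\mathfrak p$ (it contains $a_n$), so its image in the quotient polynomial ring $A/\mathfrak p$ is a nonzero linear form. That said, this is a point the paper itself leaves unjustified, and your reasoning is otherwise sound.
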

\begin{proof}
Statement \ref{it:LePNa} is proved by a banal induction. Let us
tackle \ref{it:LePNb}.

If $\ell\in P(v)\setminus S(v)$, then $a_\ell=\widetilde c_\ell$
belongs to $\widetilde{\mathfrak q}$.

If $\ell\in(P(v)\cap S(v))\setminus\{L\}$, then there exists
$m\in P(v)\cap S(v)$ such that $d_\ell=d_m-1$. Then $\ell=(m-1)^-$ and
$D_{m-1}=d_{m-1}$, whence by statement \ref{it:LePNa}
$$a_\ell\equiv\widetilde Q_{m-1}(0)=\widetilde c_m-a_m\widetilde P_{m-1}(0)
\equiv\widetilde c_m\pmod{\mathfrak p^2},$$
and therefore $a_\ell\in\widetilde{\mathfrak q}+\mathfrak p^2$.

Surely $D_{n-1}=d_{n-1}=D$ and $L=(n-1)^-$, so again by
statement \ref{it:LePNa}, we have
$$\widetilde c_n=\widetilde P_{n-1}(0)+a_n\widetilde Q_{n-1}(0)\equiv
\widetilde P_{n-1}(0)+a_La_n\equiv-x+\sum_{j\in R(n)}a_{(j-1)^-}a_j
\pmod{\mathfrak p^2}.$$
In the last sum, we gather the terms with the same value $\ell$ for
$(j-1)^-$: denoting by $\tau_\ell$ the sum of the variables $a_j$ for
$j\in\{2,\ldots,n\}$ such that $v(j)=-$ and $d_{j-1}=D_{j-1}=d_\ell$,
we obtain
$$\widetilde c_n\equiv-x+\sum_{\ell\in P(v)\cap S(v)}a_\ell\,\tau_\ell
\pmod{\mathfrak p^2}.$$
Noting that $a_\ell\in\widetilde{\mathfrak q}+\mathfrak p^2$ for
$\ell\in P(v)\cap S(v)\setminus\{L\}$ and that $\tau_L=\sigma_n$,
we get $a_L\sigma_n\in(x)+\widetilde{\mathfrak q}+\mathfrak p^2$.
Since $\sigma_n$ is invertible in $A_{\mathfrak p}$, we conclude that
$a_L\in xA_{\mathfrak p}+\widetilde{\mathfrak q}
A_{\mathfrak p}+\mathfrak p^2A_{\mathfrak p}$.

Altogether the remarks above show the inclusion
$$\mathfrak pA_{\mathfrak p}\subset xA_{\mathfrak p}
+\widetilde{\mathfrak q}A_{\mathfrak p}+\mathfrak p^2
A_{\mathfrak p}.$$
Joint with $\widetilde{\mathfrak q}\subset\mathfrak q\subset\mathfrak p$,
this gives statement \ref{it:LePNb}.
\end{proof}

The ideal in $A$ of the subvarieties
$$V=(\upphi_v)^{-1}(\mathcal Y(\mathbf Z_v))\quad\text{and}\quad
X=(\upphi_v)^{-1}(U_v\cap\mathcal X(\mathbf Z_w,(1,n-1)))$$
are $\mathfrak p$ and $\mathfrak q$, respectively. The local ring
$\mathscr O_{V,X}$ of $X$ along $V$ is the localization of
$\overline A=A/\mathfrak q$ at the ideal
$\overline{\mathfrak p}=\mathfrak p/\mathfrak q$.
Lemma~\ref{le:PrepNaka}~\ref{it:LePNb} combined with Nakayama's lemma
shows that the image of $x=x_1-x_2$ in $\overline A$ generates the
ideal $\overline{\mathfrak p}\,\overline A_{\overline{\mathfrak p}}$.
As a consequence, the order of vanishing of $x_1-x_2$ along $V$ is
equal to one, and by definition, this is the multiplicity of
$\mathcal Y(\mathbf Z_v)$ in the intersection product
$\mathcal X(\mathbf Z_w,(1,n-1))\cdot\,\BDConv_n^{\bm\lambda}\bigl|_\Delta$.
This proves Proposition~\ref{pr:IncMulI}~\ref{it:PrIMIb}.

\subsection{Inclusion, II}
\label{ss:IncMulII}
In this section, we again consider words $v$ and $w$ such that
$(v(1),w(1))=(+,-)$ and $\wt(v)=\wt(w)$ and explore the situation
where the path representing $v$ lies strictly above the one
representing $w$ (except of course at the two endpoints) but does
not stay parallel to it. We thus assume that there exists
$k\in\{2,\ldots,n-1\}$ such that $(v(k),w(k))=(+,-)$.

\begin{proposition}
\label{pr:IncMulII}
Under these assumptions,
$\mathcal Y(\mathbf Z_v)\not\subset\mathcal X(\mathbf Z_w,(1,n-1))$.
\end{proposition}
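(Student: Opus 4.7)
The plan is to construct an explicit polynomial in the ideal $\mathfrak q \subseteq A_n$ of $\mathcal X(\mathbf Z_w,(1,n-1))$ (pulled back via the chart $\upphi_v$) that does not lie in the ideal $\mathfrak p = (x) + (a_\ell : \ell \in P(v))$ of $\mathcal Y(\mathbf Z_v)$; this will immediately establish the non-inclusion. I begin with a combinatorial observation: I choose $k \in \{2, \ldots, n-1\}$ to be the smallest index with $(v(k),w(k))=(+,-)$. The vertical gap between the paths representing $v$ and $w$ equals $2$ at position~$1$, remains positive throughout the interior (and hence at least $2$, by parity), and can change by $\pm 2$ only at positions of disagreement; moreover, the first change after position~$1$ occurs at position~$k$ by minimality, and it is an increase (a $(-,+)$ would reduce the gap to $0$, violating the strict-above hypothesis). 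Therefore $v(\ell)=w(\ell)$ for every $\ell\in\{2,\ldots,k-1\}$, which places the prefix of length $k-1$ in the parallel regime of Section~\ref{ss:IncMulI}.

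Next, I would run the inductive machinery of that section (the analogs of Lemmas~\ref{le:Induc} and~\ref{le:ValTildeP}) only through position $k-1$, obtaining reduced polynomials $\widetilde P_{k-1},\widetilde Q_{k-1}$ and effective generators $\widetilde c_\ell$ for $\ell\in P(w)\cap\{1,\ldots,k-1\}$, together with the congruences $P_{k-1}\equiv\widetilde P_{k-1}$, $Q_{k-1}\equiv\widetilde Q_{k-1}$ modulo $\mathring{\mathfrak q}_{k-1}[z]$ and the divisibility $\tilde z^{D_{k-1}-d_{k-1}}\mid\widetilde Q_{k-1}$. At position $k$ the transition formula switches to the $(+,-)$ type, which is structurally different from the $(+,+)$ or $(-,-)$ types used through position $k-1$: whereas those earlier steps preserved the rank-one ``Newton-polygon balance'' between the pair $(P,Q)$ and the pair $(R,S)$ that was exploited in Section~\ref{ss:IncMulI}, the $(+,-)$ formula at $k$ produces $P_k=(z-x_k)P_{k-1}$, $Q_k=a_kP_{k-1}+Q_{k-1}$, and modifies $(R_k,S_k)$ via the rational $b_k$, breaking this balance and feeding an asymmetric input into all subsequent recursions.

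Finally, I would propagate the recursion forward to $\ell=n$ and, by the pivoting procedure of Lemma~\ref{le:Elimin}, extract an element $g\in\widetilde{\mathfrak q}_n$ expressed purely in the variables $x$ and $a_j$ with $v(j)=-$. The crucial point is to verify that, once all intermediate $a_j$ with $j\in P(v)\cap S(v)$ have been eliminated, the resulting polynomial has a nonzero constant term (as a polynomial in $x$) that is supported only on the ``free'' variables $a_j$ with $v(j)=-$; such a term manifestly lies outside $\mathfrak p$, so $g$ witnesses $\mathfrak q\not\subset\mathfrak p$. The main obstacle is controlling this residual monomial as the recursion crosses position~$k$: in Section~\ref{ss:IncMulI} the careful combinatorial bookkeeping produced the leading term $a_1\sigma_n - x$ in which both monomials lie in $\mathfrak p$, and the delicate part here is to identify which ``unbalanced'' monomial is introduced by the $(+,-)$ step at $k$ and to show that it is not cancelled by any subsequent contribution.
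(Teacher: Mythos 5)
Your proposal sets up a plan but does not carry out the crucial step, and the gap is not a mere detail: it is the substance of the argument. You pick $k$ to be the \emph{smallest} index with $(v(k),w(k))=(+,-)$, note that positions $2,\ldots,k-1$ are parallel, and then plan to ``propagate the recursion forward to $\ell=n$'' and run a Lemma~\ref{le:Elimin}-type pivoting to find an element of $\widetilde{\mathfrak q}_n$ outside $\mathfrak p$. But you yourself flag that you cannot control the recursion once it crosses $k$, nor identify the ``unbalanced monomial'' or show it survives cancellation. That is exactly what has to be proved. Moreover, the estimates that make Lemma~\ref{le:Elimin} work (namely the congruences of Lemma~\ref{le:ValTildeP} for $\widetilde P_\ell(0)$ and $\widetilde Q_\ell$ against the graded ideals $J_d$) are specific to the parallel regime $v(\ell)=w(\ell)$; after the first $(+,-)$ step those estimates no longer apply and must be rebuilt from scratch, and further $(+,-)$ steps (which generically do occur beyond $k$) would break them again. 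A global elimination all the way to $n$ in this setting is substantially harder than what is needed, and nothing in your proposal establishes it.

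The paper's proof avoids this difficulty entirely by going in the opposite direction. It takes $K$ to be the \emph{largest} $(+,-)$ position, so that for $\ell>K$ only $(+,+)$, $(-,-)$ and $(-,+)$ can occur; since $\wt(v)=\wt(w)$ and the gap is at least $4$ at $K$, the paths must converge via $(-,+)$ afterwards. It then defines $L$ as the first position $>K$ where either $(v(L),w(L))=(-,+)$ or the local weight $d_L$ becomes positive, runs the recursion only up to $L-1$ (using a recursion that, unlike the one of Section~\ref{ss:IncMulI}, has rules for all four letter-pairs), and performs a purely \emph{local} analysis at $L$: depending on whether $d_{L-1}<0$ or $d_{L-1}=0$ and on the letter pair at $L$, it either shows $\mathring{\mathfrak q}_n=B_n$ (empty intersection) or exhibits an explicit witness ($\sigma_L$, or $a_L\sigma_{L-1}+1$) in $\mathfrak q\setminus\mathfrak p$. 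No propagation to $n$ and no elimination of the variables $a_j$ with $j\in P(v)\cap S(v)$ is needed. Your approach is a genuinely different route but, as written, contains a hole precisely where the work is; you would need an analog of the Newton-polygon/elimination machinery across the $(+,-)$ steps, which you have not supplied and which the paper's choice of $K$ makes unnecessary.
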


The proof of Proposition~\ref{pr:IncMulII} fills the remainder
of this section. Our argument is similar to our proof in
Proposition~\ref{pr:IncMulI}~\ref{it:PrIMIa}.

For each $\ell\in\{1,\ldots,n\}$, we define
$A_\ell=\mathbb C[x_2][x,a_1,\ldots,a_\ell]$,
where $x=x_1-x_2$. We introduce $\tilde z=z-x_2$.

In addition:

\vspace{-12pt}
\begin{itemize}
\item
let $K$ be the largest integer $k\in\{2,\ldots,n-1\}$ such that
$(v(k),w(k))=(+,-)$;
\item
for $\ell\in\{K,\ldots,n\}$, let $d_\ell$ be the weight of the word
$v(K+1)v(K+2)\cdots v(\ell)$, with the convention $d_K=0$;
\item
let $L$ be the smallest position $\ell>K$ such that
$(v(\ell),w(\ell))=(-,+)$ or $d_\ell>0$.
\end{itemize}

Set $\widetilde P_1=\tilde z-x$ and $\widetilde Q_1=a_1$.
For $\ell\in\{2,\ldots,L-1\}$, define by induction two polynomials
$\widetilde P_\ell$, $\widetilde Q_\ell$ in $A_\ell[z]$ as follows:
\begin{itemize}
\item
If $(v(\ell),w(\ell))=(+,+)$, then
$$\widetilde P_\ell=\widetilde P_{\ell-1}\quad\;\text{and}\quad\;
\widetilde Q_\ell=\begin{cases}
\displaystyle\frac{a_\ell\widetilde P_{\ell-1}+\widetilde Q_{\ell-1}
-\bigl(a_\ell\widetilde P_{\ell-1}+\widetilde Q_{\ell-1}\bigr)
\bigl(0\bigr)}{\tilde z}&\text{if $\ell<K$,}\\[8pt]
\displaystyle\frac{\widetilde Q_{\ell-1}-\widetilde Q_{\ell-1}(0)}{\tilde z}
&\text{if $\ell>K$.}
\end{cases}$$
\item
If $(v(\ell),w(\ell))=(-,+)$, then
$$\widetilde P_\ell=\frac{\widetilde P_{\ell-1}+a_\ell\widetilde
Q_{\ell-1}-\bigl(\widetilde P_{\ell-1}+a_\ell\widetilde Q_{\ell-1}\bigr)
\bigl(0\bigr)}{\tilde z}\quad\;\text{and}\quad\;
\widetilde Q_\ell=\widetilde Q_{\ell-1}.$$
\item
If $(v(\ell),w(\ell))=(+,-)$, then
$\widetilde P_\ell=\tilde z\,\widetilde P_{\ell-1}$ and
$\widetilde Q_\ell=a_\ell\widetilde P_{\ell-1}+\widetilde Q_{\ell-1}$.
\item
If $(v(\ell),w(\ell))=(-,-)$, then
$\widetilde P_\ell=\widetilde P_{\ell-1}+a_\ell\widetilde Q_{\ell-1}$ and
$\widetilde Q_\ell=\tilde z\,\widetilde Q_{\ell-1}$.
\end{itemize}

For $\ell\in\{1,\ldots,L\}$:

\vspace*{-10pt}
\begin{itemize}
\item
let $\mathring{\mathfrak q}_\ell$ be the ideal of
$B_\ell$ generated by $\{b_j\mid j\in P(w),\;j\leq\ell\}$;
\item
if $\ell\geq K$, let $\sigma_\ell$ be the sum of the $a_j$ for
$j\in\{K+1,\ldots,\ell\}$ such that $v(j)=-$ and $d_{j-1}=0$,
with the convention $\sigma_K=0$.
\end{itemize}

\begin{lemma}
\label{le:Induc2}
For $\ell\in\{1,\ldots,L-1\}$, we have

\vspace{-12pt}
\renewcommand\theenumi{(\roman{enumi})${}_\ell$}
\begin{enumerate}
\item
$\widetilde P_\ell(\tilde z)\equiv P_\ell(z)
\pmod{\mathring{\mathfrak q}_\ell[z]}\;$ and
$\;\widetilde Q_\ell(\tilde z)\equiv Q_\ell(z)
\pmod{\mathring{\mathfrak q}_\ell[z]}$,
\item
if $\ell\geq K$, then $\widetilde P_\ell(0)=\widetilde Q_K(0)\sigma_\ell$
and $\widetilde Q_\ell=\tilde z^{-d_\ell}\widetilde Q_K$.
\end{enumerate}
\renewcommand\theenumi{(\alph{enumi})}
\end{lemma}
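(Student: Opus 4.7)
The plan is to proceed by induction on $\ell$, mirroring the proof of Lemma~\ref{le:Induc}. The base case $\ell=1$ is immediate: $P_1=z-x_1$, $Q_1=a_1$ match $\widetilde P_1=\tilde z-x$, $\widetilde Q_1=a_1$ under $z=\tilde z+x_2$ and $x=x_1-x_2$, while (ii)${}_1$ is vacuous because $K\geq 2$. For the induction step I split according to the pair $(v(\ell),w(\ell))$ and treat (i)${}_\ell$ and (ii)${}_\ell$ in turn.

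For statement (i)${}_\ell$, the two cases $(+,+)$ and $(-,-)$ reproduce the argument from Lemma~\ref{le:Induc} verbatim: one shows that $b_\ell\equiv 0\pmod{\mathring{\mathfrak q}_\ell}$, whereupon the rational identities defining $P_\ell$ and $Q_\ell$ reduce modulo $\mathring{\mathfrak q}_\ell[z]$ to polynomial identities matching the tilde recursions after the substitution $z=\tilde z+x_2$. The case $(+,-)$, which occurs at $\ell=1$, at $\ell=K$ and possibly at other positions $\ell\leq K$, is the easiest: the general recursion already expresses $P_\ell$ and $Q_\ell$ as polynomials in $P_{\ell-1}$ and $Q_{\ell-1}$ with no involvement of $b_\ell$, and these match the tilde recursion directly. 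The case $(-,+)$, which is new compared with Lemma~\ref{le:Induc} but can only arise for $\ell\leq K$, is handled by the same mechanism as $(+,+)$: once $b_\ell\equiv 0\pmod{\mathring{\mathfrak q}_\ell}$ is known, the only subtlety is to verify that the constant term $\bigl(\widetilde P_{\ell-1}+a_\ell\widetilde Q_{\ell-1}\bigr)(0)$ subtracted in the definition of $\widetilde P_\ell$ lies in $\mathring{\mathfrak q}_\ell$; this follows from (i)${}_{\ell-1}$ together with the identity $f_\ell\cdot b_\ell=\bigl(P_{\ell-1}+a_\ell Q_{\ell-1}\bigr)(x_2)$.

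For statement (ii)${}_\ell$ with $\ell\geq K$, the initial case $\ell=K$ follows from $\widetilde P_K=\tilde z\,\widetilde P_{K-1}$, $\sigma_K=0$ and $d_K=0$. For $K<\ell<L$, the maximality of $K$ excludes $(+,-)$ and the minimality of $L$ excludes $(-,+)$ and forces $d_\ell\leq 0$, so only the subcases $(+,+)$ and $(-,-)$ remain. In the $(+,+)$ subcase, $d_{\ell-1}=d_\ell-1\leq -1$, so by (ii)${}_{\ell-1}$ the polynomial $\widetilde Q_{\ell-1}=\tilde z^{-d_{\ell-1}}\widetilde Q_K$ is divisible by $\tilde z$; hence $\widetilde Q_{\ell-1}(0)=0$ and $\widetilde Q_\ell=\widetilde Q_{\ell-1}/\tilde z=\tilde z^{-d_\ell}\widetilde Q_K$, while $\widetilde P_\ell=\widetilde P_{\ell-1}$ and $\sigma_\ell=\sigma_{\ell-1}$ yield the $\widetilde P$-relation. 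In the $(-,-)$ subcase, $\widetilde P_\ell=\widetilde P_{\ell-1}+a_\ell\widetilde Q_{\ell-1}$, and one distinguishes $d_{\ell-1}=0$ (then $\widetilde Q_{\ell-1}(0)=\widetilde Q_K(0)$ and $\sigma_\ell=\sigma_{\ell-1}+a_\ell$) from $d_{\ell-1}<0$ (then $\widetilde Q_{\ell-1}(0)=0$ and $\sigma_\ell=\sigma_{\ell-1}$); in both situations $\widetilde P_\ell(0)=\widetilde Q_K(0)\sigma_\ell$ falls out, and $\widetilde Q_\ell=\tilde z\,\widetilde Q_{\ell-1}=\tilde z^{-d_\ell}\widetilde Q_K$ is immediate.

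The main obstacle, exactly as in Lemma~\ref{le:Induc}, is the ideal-membership bookkeeping needed to justify the divisions by $\tilde z$ in the tilde recursions: one must repeatedly confirm that the constant term being subtracted actually lies in $\mathring{\mathfrak q}_\ell$. Modulo this routine but tedious checking, the whole argument is a mechanical case analysis, and the only genuinely new case compared with Lemma~\ref{le:Induc} is $(-,+)$, which in fact mirrors the $(+,+)$ analysis with the roles of $\widetilde P$ and $\widetilde Q$ essentially interchanged.
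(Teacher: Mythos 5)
The proposal is correct and takes essentially the same route as the paper: induction on $\ell$, case analysis on the pair $(v(\ell),w(\ell))$, with the subtle step — flagged by the paper — being $(+,+)$ with $\ell>K$, where one must first derive $a_\ell\in\mathring{\mathfrak q}_\ell$ from $\widetilde Q_{\ell-1}(0)=0$ via (ii)${}_{\ell-1}$ before the rational recursion for $Q_\ell$ reduces to the tilde recursion. One small inaccuracy in your summary: for $(-,-)$ the element $b_\ell$ is \emph{not} in $\mathring{\mathfrak q}_\ell$ (since $\ell\notin P(w)$ the ideal does not change); this is harmless because the recursions for $P_\ell,Q_\ell$ in the $(-,-)$ case do not involve $b_\ell$ at all, but it is not the mechanism you stated.
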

\begin{proof}
One again proceeds by induction. The details are straightforward indeed,
except in the case where $(v(\ell),w(\ell))=(+,+)$ and $\ell>K$, where
one can follow the arguments offered in the proof of
Lemma~\ref{le:Induc} to get $a_\ell\in\mathring{\mathfrak q}_\ell$.
\end{proof}

We now distinguish three cases:
\begin{itemize}
\item
Assume that $d_{L-1}<0$. Then necessarily $(v(L),w(L))=(-,+)$. By
assertion~(ii)${}_{L-1}$ in Lemma~\ref{le:Induc2}, we get
$\widetilde Q_{L-1}(0)=0$. Using assertion (i)${}_{L-1}$ in that
lemma, we deduce that $Q_{L-1}(x_2)\in\mathring{\mathfrak q}_{L-1}$.
Then, by the identity $P_{L-1}S_{L-1}-Q_{L-1}R_{L-1}=1$, we see
that $P_{L-1}(x_2)$ is invertible in the ring
$B_{L-1}/\mathring{\mathfrak q}_{L-1}$. Thus,
$b_L=\bigl(P_{L-1}+a_LQ_{L-1}\bigr)\bigl(x_2\bigr)\times f_L^{-1}$
is invertible in $B_L/\mathring{\mathfrak q}_{L-1}B_L$.
We conclude that $\mathring{\mathfrak q}_L=B_L$, and therefore
$\mathring{\mathfrak q}_n=B_n$. Thus,
$U_v\cap\dot{\mathcal X}(\mathbf Z_w)=\varnothing$, so
$\mathcal X(\mathbf Z_w,(1,n-1))$ does not meet $U_v$ and cannot
contain $\mathcal Y(\mathbf Z_v)$.
\item
Assume that $d_{L-1}=0$ and $(v(L),w(L))=(-,+)$. We note that
$P_K(x_2)=0$ by construction. The identity $P_KS_K-Q_KR_K=1$ then implies
that $Q_K(x_2)$ is invertible in $B_K$, and by assertion~(i)${}_K$
in Lemma~\ref{le:Induc2}, $\widetilde Q_K(0)$
is invertible in $B_K/\mathring{\mathfrak q}_K$. Moreover,
$f_Lb_L=\bigl(P_{L-1}+a_LQ_{L-1}\bigr)\bigl(x_2\bigr)$
belongs to $\mathring{\mathfrak q}_L$. Using assertion~(ii)${}_{L-1}$
in Lemma~\ref{le:Induc2}, we deduce that
$$\bigl(\widetilde P_{L-1}+a_L\widetilde Q_{L-1}\bigr)\bigl(0\bigr)
=\widetilde Q_K(0)(\sigma_{L-1}+a_L)=\widetilde Q_K(0)\sigma_L$$
belongs to $\mathring{\mathfrak q}_L$ too. Therefore $\sigma_L$
belongs to $\mathring{\mathfrak q}_L$, hence to $\mathfrak q$.
However $\sigma_L\notin\mathfrak p$,
because $a_L$ is a summand in the sum that defines $\sigma_L$
whereas $L\notin P(v)$. We must then conclude that
$\mathfrak q\not\subset\mathfrak p$, in other words that
$\mathcal Y(\mathbf Z_v)\not\subset\mathcal X(\mathbf Z_w,(1,n-1))$.
\item
Assume that $d_{L-1}=0$ and $(v(L),w(L))=(+,+)$. As in the
previous case, we note that $\widetilde Q_K(0)$ is invertible in
$B_K/\mathring{\mathfrak q}_K$. But now we have
$f_Lb_L=\bigl(a_LP_{L-1}+Q_{L-1}\bigr)\bigl(x_2\bigr)$, so we get
$$\widetilde Q_K(0)(a_L\sigma_{L-1}+1)\in\mathring{\mathfrak q}_L$$
and then $a_L\sigma_{L-1}+1\in\mathfrak q$. Here however
$a_L\in\mathfrak p$, so $a_L\sigma_{L-1}+1\notin\mathfrak p$.
Again we must conclude that $\mathfrak q\not\subset\mathfrak p$ and
$\mathcal Y(\mathbf Z_v)\not\subset\mathcal X(\mathbf Z_w,(1,n-1))$.
\end{itemize}

Proposition~\ref{pr:IncMulII} is then proved.

\subsection{Loose ends}
\label{ss:LooseEnds}
We can now prove that the MV basis of $V(\varpi)^{\otimes n}$
satisfies the second formula in \eqref{eq:DefYw}. We consider
two words $v$ and $w$ in $\mathscr C_n$ with $w(1)=-$ and
$\wt(v)=\wt(w)$ and look for the coefficient of
$y_v$ in the expansion of $x_-\otimes y_{w'}$ on the MV basis,
where $w'$ is the word $w$ stripped from its first letter.

If $v(1)=-$, then this coefficient is zero except for $v=w$, in
which case the coefficient is one. This follows from Theorem~5.13
in \cite{BaumannGaussentLittelmann}.

If $v(1)=+$, then the path representing $v$ starts above the path
representing $w$. We distinguish two cases.

In the case where $v$ stays strictly above $w$ until the very end,
we can refer to Propositions~\ref{pr:IncMulI} and~\ref{pr:IncMulII}:
the coefficient of $y_v$ is non-zero only if $v$ stays parallel
to $w$ at distance two and the last letter of $w'$ is significant. If this
condition is fulfilled, then the coefficient is one.

In the case where $v$ and $w$ rejoin before the end, after $m$ letters,
then we write $v$ and $w$ as concatenations $+v_{(2)}v_{(3)}$ and
$-w_{(2)}w_{(3)}$, respectively, with $v_{(2)}$ and $w_{(2)}$ of length
$m-1$ and $v_{(3)}$ and $w_{(3)}$ of length $n-m$. By assumption,
$\wt v_{(3)}=\wt w_{(3)}$. We can then apply Proposition~\ref{pr:Trunc}
with $n_1=1$, $n_2=m-1$ and $n_3=n-m$: if $v_{(3)}\neq w_{(3)}$, then
the coefficient of $y_v$ in the expansion of $x_-\otimes y_{w'}$ is zero;
otherwise, it is equal to the coefficient of $y_{+v(2)}$ in the expansion
of $x_-\otimes y_{w(2)}$ on the MV basis of $V(\varpi)^{\otimes m}$.

Thus, Proposition~\ref{pr:Trunc} reduces the second case to the first
one, but for words of length $m$. The coefficient is then non-zero only
if $+v_{(2)}$ stays parallel to $-w_{(2)}$ at distance two and the last
letter of $w_{(2)}$ is significant, in which case the coefficient is
one.

To sum up: if $(v(1),w(1))=(+,-)$, then the coefficient of $y_v$ in the
expansion of $x_-\otimes y_{w'}$ is either zero or one; it is one if and
only if $v$ is obtained by flipping the first letter $-$ of $w$ into a
$+$ and flipping a significant letter $+$ in $w'$ into a $-$. This
shows that the MV basis satisfies the second formula in \eqref{eq:DefYw}.
We have proved:

\begin{theorem}
\label{th:MVbasis}
$(y_w)_{w\in\mathcal C_n}$ is the MV basis of $V(\varpi)^{\otimes n}$.
\end{theorem}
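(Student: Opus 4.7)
The plan is to invoke Proposition~\ref{pr:CaracYw}, which says that a family indexed by $\mathscr C$ is determined by the two recursions in~\eqref{eq:DefYw}. By Theorem~5.13 of~\cite{BaumannGaussentLittelmann}, the MV basis already satisfies the first equation $x_+\otimes y_w=y_{+w}$, so the entire task is to verify that the second recursion
$$x_-\otimes y_{w'}=y_{-w'}+\sum_{v\in\mathscr P(w')}y_{+v}$$
also holds when $y$ is replaced by the MV basis. I will therefore fix a word $w=-w'$ and compute, for each $v\in\mathscr C_n$ with $\wt(v)=\wt(w)$, the coefficient of $\langle\mathbf Z_v\rangle$ in the expansion of $\langle\mathbf Z_-\rangle\otimes\langle\mathbf Z_{w'}\rangle$ by means of Proposition~\ref{pr:TransMat}.

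First I would split on the value of $v(1)$. If $v(1)=-$, then Theorem~5.13 of~\cite{BaumannGaussentLittelmann} immediately gives coefficient~$1$ when $v=w$ and $0$ otherwise, accounting for the $y_{-w'}$ term. If $v(1)=+$, the path of $v$ starts above the path of $w$, and I distinguish whether the two paths stay strictly apart for the remainder of the word or meet again at some intermediate position $m<n$.

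In the ``strictly above'' case, Propositions~\ref{pr:IncMulI} and~\ref{pr:IncMulII} do all the work: the coefficient is nonzero only when $v$ runs parallel to $w$ at vertical distance two (this is what Proposition~\ref{pr:IncMulII} excludes otherwise) and the last letter of $w'$ is significant, in which case Proposition~\ref{pr:IncMulI} gives multiplicity one. This is exactly the contribution of $\mathscr P(w')$ for the special shape of words covered here. In the ``meet again'' case, I write $v=+v_{(2)}v_{(3)}$ and $w=-w_{(2)}w_{(3)}$ with $v_{(3)},w_{(3)}\in\mathscr C_{n-m}$ and $\wt v_{(3)}=\wt w_{(3)}$, and apply Proposition~\ref{pr:Trunc} with $(n_1,n_2,n_3)=(1,m-1,n-m)$. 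That proposition forces $v_{(3)}=w_{(3)}$ for a nonzero contribution and identifies the coefficient with the analogous one for the shorter pair $(+v_{(2)},-w_{(2)})$, thereby reducing this case to the ``strictly above'' case already handled.

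Assembling the three cases, the coefficient of $\langle\mathbf Z_v\rangle$ in $\langle\mathbf Z_-\rangle\otimes\langle\mathbf Z_{w'}\rangle$ is $1$ precisely when $v=-w'$ or $v$ is obtained from $w$ by flipping $w(1)=-$ into~$+$ and some significant $+$ of $w'$ into~$-$, and is $0$ otherwise. This is exactly the right-hand side of the second recursion in~\eqref{eq:DefYw}, so Proposition~\ref{pr:CaracYw} identifies the MV basis with $(y_w)_{w\in\mathscr C_n}$. The main obstacle is of course the geometric input: Propositions~\ref{pr:IncMulI} and~\ref{pr:IncMulII}, whose proofs occupy sections~\ref{ss:IncMulI}--\ref{ss:IncMulII} and require the explicit chart calculations in the Beilinson--Drinfeld convolution variety; here I simply invoke them.
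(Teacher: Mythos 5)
Your argument reproduces the paper's own proof in sect.~\ref{ss:LooseEnds} step for step (the case $v(1)=-$ via Theorem~5.13 of \cite{BaumannGaussentLittelmann}, the strictly-above case via Propositions~\ref{pr:IncMulI}--\ref{pr:IncMulII}, and the rejoin case via Proposition~\ref{pr:Trunc}), so it is correct and takes essentially the same approach. One small inaccuracy: Proposition~\ref{pr:CaracYw} characterizes the family $(y_w)$ by conditions \ref{it:PrCYa}--\ref{it:PrCYc}, not by the recursions in \eqref{eq:DefYw}; the uniqueness you actually need --- that a family satisfying $y_\varnothing=1$ and both identities in \eqref{eq:DefYw} is unique --- follows directly from the recursive definition of $(y_w)$, which is what the paper relies on implicitly.
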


Putting Theorem~\ref{th:MVbasis} alongside Theorem~\ref{th:FrenKhov},
Proposition~\ref{pr:ProjCart}, and Theorem~1.11 in \cite{FrenkelKhovanov},
we obtain the result stated in the introduction.

Pierre Baumann,
Institut de Recherche Mathématique Avancée,
Université de Strasbourg et CNRS,
7 rue René Descartes,
67084 Strasbourg Cedex,
France.\\
\texttt{p.baumann@unistra.fr}
\medskip

Arnaud Demarais,
15 allée du puits,
01290 Crottet,
France.\\
\texttt{arnaud.demarais@ac-dijon.fr}

\end{document}